\documentclass[a4paper]{amsart}

\usepackage{amssymb} 
\usepackage{amsmath}
\usepackage{amsthm}
\usepackage{dsfont, enumerate}
\usepackage{color}

\usepackage{verbatim}
\usepackage{url}

\newtheorem{thm}{Theorem}[section]

\newtheorem{lem}[thm]{Lemma}
\newtheorem{cor}[thm]{Corollary}

\usepackage{caption}
\usepackage{array} 
\newcolumntype{M}[1]{>{\centering\arraybackslash}m{#1}} 

\newenvironment{proofbold}[1][\proofname]{\noindent{\textbf{Proof of #1.\ }}}{\hfill$\blacksquare$}
\usepackage{sagetex}
\newenvironment{mysage}{\sagesilent}{\endsagesilent}

\def\1{\mathds{1}}
\let\ve=\varepsilon

\def\mod{\mathbin{\,\textrm{mod}\,}}

\title{On a M\"obius double sum%
}
\author{Olivier Ramar\'e, Sebastian Zuniga-Alterman}

\newcommand{\Addresses}{{
  {\footnotesize
\ \\
  O.~RAMAR\'E - \textsc{CNRS / Institut de Math\'ematiques de Marseille, Aix Marseille Universit\'e,
U.M.R. 7373, Campus de Luminy, Case 907, 13288 Marseille Cedex 9, FRANCE.}\\
  \texttt{olivier.ramare@univ-amu.fr}

\ \\
  S.~ZUNIGA-ALTERMAN - \textsc{School of Science, The University of New South Wales Canberra, Northcott Drive, Campbell ACT 2612, AUSTRALIA.}\\
  \texttt{s.zuniga$\_$alterman@unsw.edu.au}

}}}

\thanks{The authors thank A. de Camargo and A. Simonič for independently verifying several computations appearing in this work and for helpful comments.}
\thanks{The first author has been partly supported by the joint
  FWF-ANR project Arithrand: FWF: I 4945-N and ANR-20-CE91-0006ANR. The second author has been supported by ARC Grant.}

\begin{document}

\begin{mysage}

######################################################
####### MANAGING NUMBER REPRESENTATION ###########
######################################################

def RoundUp(x,d):
    return float(ceil(x*10^d)/10^d)
    
def RoundDown(x,d):
    return float(floor(x*10^d)/10^d)

def Upper(x,d): 
    return RoundUp(RIF(x).upper(),d)
    
def Lower(x,d):
    return RoundDown(RIF(x).lower(),d) 
    
def Trunc(x,d):
     if x>=0:
          return float(floor(x*10^d)/10^d)
     return float(ceil(x*10^d)/10^d)

######################################################
############### GENERAL CONSTANTS ##################
######################################################

digits = int(4)

E1 = 1044/10^3 
E2 = 232/10^3

######################################################
#################### LEMMA 2.5 ########################
############################################# {m4} ####

H1 = 10032/10^6
H2 = 296/10^4

trop = H2/H1

######################################################
#################### LEMMA 4.3 ########################
###################################### {harmonicmu2} ###

Harm1_a = 578/10^3
Harm2_a = 1166/10^3

Harm1 = 1040/10^3
Harm2 = 1048/10^3

######################################################
#################### LEMMA 4.4 ######################## 
########################################### {Dusart1} ###
     
def B(k,M):
    C=5761455
    if k>1 and M>10^8-1:
        return RIF(k/(k-1)/M^(k-1)/log(M)*(1+1/log(M)+253816/10^5/log(M)^2)-C/M^k)
    return False
    
def C(k,M):
    D=99987730
    if k>1 and M>10^8-1:
        return RIF(k/(k-1)/M^(k-1)*(1+2/10/log(M)^2)-D/M^k)
    return False

######################################################
#################### LEMMA 4.5 ########################
############################################# {Aux1} ###

delt1 = 1/3
delt2 = 5/12
delt3 = 4/9

xi = RIF(1-1/12/log(10))

def A(d):
    return RIF(max(euler_gamma,1/d/exp(euler_gamma*d+1)))

g0_2 = RIF(sqrt(3)*(sqrt(2)-1)/2)
g2_2 = RIF(H2/H1 * (1-1/2^xi))

fx1_2 = RIF((2-1)/(sqrt(2)-1)^2)
Fx1_2 = RIF(1-(1-fx1_2)/2-fx1_2/2^2)
Fx1_2_delta2 = RIF(1+abs(1-fx1_2)/2^(1-delt2)+abs(fx1_2)/2^(2-2*delt2))

fx2_2 = RIF(g0_2*(2-1)/(sqrt(2)-1)^2)
Fx2_2 = RIF(1-(1-fx2_2)/2-fx2_2/2^2)
Fx2_2_delta2 = RIF(1+abs(1-fx2_2)/2^(1-delt2)+abs(fx2_2)/2^(2-2*delt2))

f1_2 = RIF(g0_2^2*(2-1)/(sqrt(2)-1)^2)
F1_2 = RIF(1-(1-f1_2)/2-f1_2/2^2)
F1_2_delta1 = RIF(1+abs(1-f1_2)/2^(1-delt1)+abs(f1_2)/2^(2-2*delt1))
F1_2_delta2 = RIF(1+abs(1-f1_2)/2^(1-delt2)+abs(f1_2)/2^(2-2*delt2))
F1_2_delta3 = RIF(1+abs(1-f1_2)/2^(1-delt3)+abs(f1_2)/2^(2-2*delt3))

M = 10^8  

P1_M =  290950563/10^8 #PRECISION 10^8
P1_delta1_M = 62357582/10^6
P1_delta2_M = 3061036494/10^7 #PRECISION 10^8
P1_delta3_M = 70785717/10^5

def Cm(d):
    return RIF((2+M^(d-1/2)+M^(d-1))/(1-M^(-1/2)))

T = 4*10^9

vx1 = Fx1_2*P1_M*exp(2/(1-M^(-1/2))*B(3/2,M))+(2-delt2)/(1-delt2)*A(delt2)*Fx1_2_delta2*P1_delta2_M*exp(Cm(delt2)*B(3/2-delt2,M))*T^(-delt2)
vx2 = Fx2_2*P1_M*exp(2/(1-M^(-1/2))*B(3/2,M))+(2-delt2)/(1-delt2)*A(delt2)*Fx2_2_delta2*P1_delta2_M*exp(Cm(delt2)*B(3/2-delt2,M))*T^(-delt2)
v1 = F1_2*P1_M*exp(2/(1-M^(-1/2))*B(3/2,M))+(2-delt2)/(1-delt2)*A(delt2)*F1_2_delta2*P1_delta2_M*exp(Cm(delt2)*B(3/2-delt2,M))*T^(-delt2) #delta=5/12 seems the better choice

Fmx1 = 62359917454422/10^(13)
Fmx2 = 316843122347233/10^(14)
mx1 = 206803754617859/10^(14)

THx1 = max(vx1,Fmx1)
THx2 = max(vx2,Fmx2)
TH1 = max(v1,mx1)

######################################################
#################### LEMMA 4.6 ########################
############################################# {Aux2} ###

delt = 1/3 

f2_2 = g0_2*g2_2*(1+(2^(xi-1/2)+1)/(2^xi-1))
F2_2 = RIF(1-(1-f2_2)/2-f2_2/2^2)
F2_2_delta = RIF(1+abs(1-f2_2)/2^(1-delt)+abs(f2_2)/2^(2-2*delt))

P2 = 1638009774/10^9 #PRECISION 10^8
P2_delta = 1381614454/10^8 #PRECISION 10^8

cm0_2 = RIF((1+M^(1/2-xi))/(1-M^(-xi)))
cmd_2 = RIF((1+M^(-1/6)+M^(1/2-xi)+M^(-2/3))/(1-M^(-xi)))

T2 = 10^7

v2 = F2_2*P2*exp(cm0_2*B(3/2,M))+5/2*A(delt)*F2_2_delta*P2_delta*exp(cmd_2*B(7/6,M))*T2^-delt
mx2 = 190380793763037/10^(14)

TH2 = max(v2,mx2)

######################################################
################## COROLLARY 4.7 #####################
############################################# {Cor1} ###

L1 = 2*TH2/2/log(10)/12+TH2/log(10)/12

######################################################
#################### LEMMA 4.8 ########################
############################################# {Aux3} ###

f3_2 = RIF(g2_2^2*(2-1)*2^(2*xi-1)/(2^xi-1)^2)
F3_2 = RIF(1-(1-f3_2)/2-f3_2/2^2)
F3_2_spec = RIF(1+abs(f3_2-1)/(sqrt(2)-1))  

W = RIF((sqrt(2)-1)/(sqrt(2)-1+abs(f3_2-1))*(E1+abs(f3_2-1)*E2/(sqrt(2)-1)))

P3 = 1031648197/10^9 #PRECISION 10^8
P3_spec = 3191814808/10^9 #PRECISION 10^8

cm0_3 = RIF((2+M^(xi-2))/(1-M^(-xi))^2)
cmd_3 = RIF(2/(1-M^(-xi))^2/(1-M^(-1/2)))

T3 = 10^6

v3 = F3_2*P3+3*W*P3_spec*T3^(-1/2)
mx3 = 267710025/10^8
TH3 = max(v3,mx3)

######################################################
################## COROLLARY 4.9 #####################
############################################# {Cor2} ###

L2 = TH3*(1/12/log(10)-1/12^2/log(10)^2)+TH3/12^2/log(10)^2

######################################################
#################### LEMMA 4.10 ########################
########################################### {auxnu2} ###

fxxx1_2 = RIF((sqrt(2)+1)/2)
fx1_2 = RIF(g0_2 * (sqrt(2)+1)/2)

F1xxx_2 = RIF(1-(1-fxxx1_2*sqrt(2))/2-fxxx1_2*sqrt(2)/2^2)
F1xxx_2_delta = RIF(1+abs(1-fxxx1_2*sqrt(2))/2^(1-delt)+abs(fxxx1_2*sqrt(2))/2^(2-2*delt))

F1x_2 = RIF(1-(1-fx1_2*sqrt(2))/2-fx1_2*sqrt(2)/2^2)
F1x_2_delta = RIF(1+abs(1-fx1_2*sqrt(2))/2^(1-delt)+abs(fx1_2*sqrt(2))/2^(2-2*delt))

P1x = 12159031/10^7 #PRECISION 10^8
P1x_delta = 726497961/10^8 #PRECISION 10^8

vxxx_aux11 = 2*F1xxx_2*P1x*exp(B(3/2,M))+4*A(delt)*F1xxx_2_delta*P1x_delta*exp((1+M^(-1/6)+M^(-2/3))*B(7/6,M))/(5*10^6)^delt
mxxx_aux11 = 22526506709/10^10 #CHECKED
THxxx_aux11 = max(vxxx_aux11,mxxx_aux11)

vx_aux11 = 2*F1x_2*P1x*exp(B(3/2,M))+4*A(delt)*F1x_2_delta*P1x_delta*exp((1+M^(-1/6)+M^(-2/3))*B(7/6,M))/(5*10^6)^delt
mx_aux11 = 1587160669/10^9 #CALCULATED UP TO 5·10^6
THx_aux11 = max(vx_aux11,mx_aux11)

######################################################
#################### LEMMA 4.11 #######################
############################################# {auxf2} ###

f2_lx = g2_2 * (2-1)/2^2*2^xi/(2^xi-1)
W2_lx = (sqrt(2)-1)/(sqrt(2)-1+abs(2*f2_lx-1))*(E1+E2*abs(2*f2_lx-1)/(sqrt(2)-1))

F2_LX = 1-(1-2*f2_lx)/2-f2_lx/2
F2P1_LX = 1+abs(2*f2_lx-1)/(sqrt(2)-1)

P0_LX = 694045937/10^9 #PRECISION 10^8
P1_LX = 1079535589/10^9 #PRECISION 10^8

Sumx = euler_gamma+log(2)/(f2_lx+1)+656118309/10^9  #PRECISION 10^8

N=2*10^8

lc1 = F2_LX*P0_LX
uc1 = F2_LX*P0_LX*exp((1-M^(-xi))^(-1)*B(1+xi,M))

logmx = 129424331261228/10^(14)
logmm = 505508801388535/10^(15)

######################################################
################## COROLLARY 4.12 ####################
########################################### {auxnu1} ###

loglemma1_1 = uc1
loglemma1_2 = logmx/12/log(10)

######################################################
#################### LEMMA 4.13 #######################
############################################ {auxf22} ###

Af2_lx = g2_2^2 * (2-1)/2^2*2^(2*xi)/(2^xi-1)^2

AF2_LX = 1-(1-2*Af2_lx)/2-Af2_lx/2
AF2P1_LX = 1+abs(2*Af2_lx-1)/(sqrt(2)-1)

AP0_LX = 1031648197/10^9 #PRECISION 10^8
AP1_LX = 3191814808/10^9 #PRECISION 10^8

ASumx = log(2)/(Af2_lx+1)+3611216153/10^(10)+euler_gamma #PRECISION 10^8

lc2 = AF2_LX*AP0_LX
uc2 = AF2_LX*AP0_LX*exp((2+M^(xi-2))*(1-M^(-xi))^(-2)*B(1+xi,M))

log2mx = 129424331261228/10^(14)
log2mm = 505508801388535/10^(15)

######################################################
################## COROLLARY 4.14 ####################
############################################ {auxnu} ###

loglemma2 = uc2/12/log(10)+log2mx/(12*log(10))^2

######################################################
################# LEMMA 5.2 ###########################
############################################ {GetGq} ###

E1 = 1044/10^3
E2 = 232/10^3

f2 = 1/4
W2 = (sqrt(2)-1)/(sqrt(2)-1+abs(2*f2-1))*(E1+E2*abs(2*f2-1)/(sqrt(2)-1))

APROD = 40282372/10^8 #PRECISION 10^8

ASUM = euler_gamma+2046752376/10^9 #PRECISION 10^8

PPdelta = 494778677/10^8 #PRECISION 10^8

APROD_delta = PPdelta*exp(B(3/2,M)/(1-M^(-1/2)))

G1 = W2*APROD_delta
G2 = E2*APROD_delta
       
######################################################
################## THEOREM 1.1 #######################
############################################# {Main} ###

S11_80 = 333415398793/10^(12)  #CHECKED 
S12_80 = 4134625411/10^5 #CHECKED

Z_80 = 809999/10^4

Thresh33_1 = 2*TH1/Z_80+2*sqrt(2)*H1*L1/sqrt(Z_80)+H1^2*L2
Thresh33_2 = APROD*(S11_80+log(2))+(G2*S12_80+G1*(1+sqrt(2)))/sqrt(10^(33))

Bound0 = 445/10^3
Bound1 = 65924939690269/10^(14) #CHECKED 
Bound2 = 4987002674334/10^(13) #CHECKED 
Bound3 = 4669804238966/10^(13) #CHECKED 
Bound4 = Thresh33_1+Thresh33_2

######################################################
################# LEMMA 6.2 ###########################
############################################ {A_est} ###

ax_1 = 1/30*(6/pi^2*log(10.9/5)+Harm2-Harm1)

u1 = 2323/30030
u2= 57731/570570
ax_2 = max(u1,u2)

rsubs = RIF(Upper(ax_2,digits))
subs = 1- 1012/10^4 #Needs to be changed if digits differs from 4

######################################################
################# LEMMA 6.3 ##########################
############################################# {B_est} ###

c = 1/1000
 
value1 = 91/10

######################################################
################## THEOREM 1.2 #######################
########################################### {Mainbis} ###

def ind1(t):
    return Integer(t > Integer(10)**12)
    
def ind2(t):
    return Integer(RIF(t) >= RIF(QQ(109)/10) * Integer(10)**8)

def Grr(ee,t):
    return RIF(2*ee-ind2(t)*value1*ee^2/exp(2*ee*euler_gamma))

def mn1(ee, tt):
    ee = RIF(ee)
    tt = RIF(tt)
    return min(ee, 1/log(tt))
 
def mn2(ee, tt):
    ee = RIF(ee)
    tt = RIF(tt)
    return min(2*ee, 1/log(tt))
    
def Theta(T,T0):
    if T==422 and T0==10^8:
        return Bound0
    if T==10^8 and T0==10^(12):
        return Bound1
    if T==10^(12) and T0==10^(16):
        return Bound2
    if T==10^(16) and T0==10^(33):
        return Bound3 
    if T==10^(33) and T0==10^(50):
        return Bound4
    return False      

def Omega1(R,T,T0):
    p1=Theta(T,T0)+3/100*loglemma1_1*1/(1+12*log(10))*ind1(T0)+Grr(1/R,T)
    p2=mn2(1/R,T)*(3/100*loglemma1_1*ind1(T0)+(ax_2*THx_aux11*41/10/sqrt(109/10)+THxxx_aux11*5/2/sqrt(109/10)+subs*THx_aux11*41/10/sqrt(47)+THxxx_aux11*5/2/sqrt(47))*2^(1/R)+(ax_2*THxxx_aux11/2/sqrt(109/10)+subs*THxxx_aux11/2/sqrt(47))*2^(2/R)/R)
    p3 = mn1(1/R,T)*2*(RIF(3)/RIF(100)*RIF(loglemma1_2) + RIF(ax_1))
    p4=mn2(1/R,T)*1/2/R*((1+1/c)*(3/100)^2*loglemma2+((25/4*THx1+5*41/10*THx2+(41/10)^2*TH1)*(c+1)/109*10)*2^(2/R)+((10/4*THx1+41/10*THx2)*(c+1)/109*10)*2^(3/R)/R+(THx1*(c+1)/4/109*10)*2^(4/R)/R^2)
    return RIF(p1+p2+p3+p4)

def exx1(T,R):
    return RIF(1/exp(log(T)/R))
    
def exx2(T,R):
    return RIF(1/exp(2*log(T)/R))

def Omega2(R,S,T,T0):
    q1=exx2(T,R)*Theta(T,T0)+3/100*loglemma1_1*1/(1+12*log(10))*ind1(T0)+Grr(1/S,T)
    q2 = 1/S*exx1(T,R)*2*(RIF(3)/RIF(100)*RIF(loglemma1_2) + RIF(ax_1)) #CORRECTED
    q3=2/S*exx2(T,R)*(3/100*loglemma1_1*ind1(T0)+(ax_2*THx_aux11*41/10/sqrt(109/10)+THxxx_aux11*5/2/sqrt(109/10)+subs*THx_aux11*41/10/sqrt(47)+THxxx_aux11*5/2/sqrt(47))*2^(1/S)+(ax_2*THxxx_aux11/2/sqrt(109/10)+subs*THxxx_aux11/2/sqrt(47))*2^(2/S)/S)
    q4=1/S^2*exx2(T,R)*((1+1/c)*(3/100)^2*loglemma2+((25/4*THx1+5*41/10*THx2+(41/10)^2*TH1)*(c+1)/109*10)*2^(2/S)+((10/4*THx1+41/10*THx2)*(c+1)/109*10)*2^(3/S)/S+(THx1*(c+1)/4/109*10)*2^(4/S)/S^2) #CORRECTED
    return RIF(q1+q2+q3+q4)

R01 = 43
S01 = 25
R02 = 82
S02 = 50
R03 = 158
S03 = 100

R11 = 94
S11 = 25
R12 = 179
S12 = 50
R13 = 347
S13 = 100

R21 = 112
S21 = 25
R22 = 204
S22 = 50
R23 = 388
S23 = 100

R31 = 133
S31 = 25
R32 = 243
S32 = 50
R33 = 461
S33 = 100

R41 = 242
S41 = 25
R42 = 440
S42 = 50
R43 = 836
S43 = 100

def CHECK(N,S,T,T0):
    R=S+1
    M=100
    Rm=0
    while R<=N:
        print(R,S,Omega1(R,T,T0),Omega2(R,S,T,T0))
        if abs(Omega1(R,T,T0)-Omega2(R,S,T,T0))<M:
            M=abs(Omega1(R,T,T0)-Omega2(R,S,T,T0))
            Rm=R
        R=R+1
    return print('(S,T,T0)=',S,T,T0,'Min at',Rm)

#CHECK(300,25,422,10^8)
#CHECK(400,50,422,10^8)
#CHECK(500,100,422,10^8)

#CHECK(300,25,10^8,10^(12))
#CHECK(400,50,10^8,10^(12))
#CHECK(500,100,10^8,10^(12))

#CHECK(300,25,10^(12),10^(16))
#CHECK(400,50,10^(12),10^(16))
#CHECK(500,100,10^(12),10^(16))

#CHECK(300,25,10^(16),10^(33))
#CHECK(400,50,10^(16),10^(33))
#CHECK(500,100,10^(16),10^(33))

#CHECK(300,25,10^(33),10^(50))
#CHECK(600,50,10^(33),10^(50))
#CHECK(1000,100,10^(33),10^(50))

#CHECK(500,44,10^(12),10^(16)) #A. Simonic 
#CHECK(500,47,10^(12),10^(16)) #A. Simonic 
#CHECK(500,74,10^(12),10^(16)) #A. Simonic 
#CHECK(1000,124,10^(12),10^(16)) #A. Simonic 
#CHECK(2000,271,10^(12),10^(16)) #A. Simonic 

S90 = 44
R90 = 182
S91 = 47
R91 = 193
S92 = 74
R92 = 293
S93 = 124
R93 = 477
S94 = 271
R94 = 1017

\end{mysage}

\maketitle

\begin{abstract} We study the double sum $S_\ve(X)$$=$$\sum_{\substack{d,e\le
        X}}\frac{\mu(d)\mu(e)}{[d,e]^{1+\ve}}$, which converges even in the case $\ve=0$, where $\mu$ denotes the M\"obius function and $[d,e]$ is the least common multiple of $d$ and $e$. 
        
        Such expressions arise naturally in analytic number theory, notably as the diagonal contribution in certain squared mean values, and they play a significant role in zero-density estimates for the Riemann zeta function and related $L$-functions. We establish uniform upper bounds for $S_\ve(X)$ across various ranges of $X$, with particular emphasis on the case $\ve$ close to $0^+$.
 \end{abstract}

\section{Introduction and results}

Zero-density estimates for the Riemann zeta function and, more generally, for Dirichlet $L$-functions, are obtained by combining approximate functional equations with mean-value estimates for Dirichlet polynomials. In the classical approach, upper bounds for 
\begin{equation*}
N(\sigma,T)=|\{\rho=\beta+i\gamma:\zeta(\rho)=0, \beta>\sigma, |\gamma|\leq T\}|
\end{equation*}
are obtained from estimates involving mollified moments of $\zeta(s)$ in the half plane $\{\sigma>1/2\}$. The strength of such bounds depends on the mollifier itself and, in particular, on the evaluation of its second moment.

We study the sum
\begin{equation*}
S_\ve(X)=\sum_{\substack{d,e\le
        X}}\frac{\mu(d)\mu(e)}{[d,e]^{1+\ve}},\quad\ve\geq 0,
 \end{equation*}
 where $\mu$ is the M\"obius function and $[d,e]$ denotes the least common multiple between $d$ and $e$. This sum appears in several problems in analytic number theory and plays an important role in zero-density arguments. Indeed, it captures the diagonal contribution for some mollified second moments and thus directly participates in the efficiency of explicit density estimates.
 
We are especially interested in $S_0(X)$. 
It has been shown by F.~Dress,
G.~Tenenbaum and H.~Iwaniec in \cite{Dress-Iwaniec-Tenenbaum*83}, that the sum $S_0(X)$
converges to a constant. Later, H.~Helfgott computed precisely
its first four decimals in
\cite[Prop. 6.30]{Helfgott*30}. This implies in particular
 that this constant is smaller than $0.4408$. 
R. de la Bret\`eche, F. Dress and G. Tenenbaum dealt with a similar sum in \cite{Breteche-Dress-Tenenbaum*20}.

Our aim in this article is to provide uniform, practical explicit
upper bounds for $S_0(X)$ and its perturbation $S_\ve(X)$ when $\ve$ is close to $0^+$, namely $\ve\in[0,1/25]$. We present the following two results.


\begin{thm} 
  \label{Main}
  Let $X>0$. Then
 \begin{equation*}
    0\le S_0(X)= \sum_{\substack{d,e\le
        X}}\frac{\mu(d)\mu(e)}{[d,e]}\leq
    \begin{cases}
     \mbox{$0.445$}&\text{when $422\leq X< 10^{8}$},\\
         \mbox{$\sage{Upper(Bound1,digits)}$}&\text{when $10^8\leq X< 10^{12}$},\\
      \mbox{$\sage{Upper(Bound2,digits)}$}&\text{when $10^{12}\leq X< 10^{16}$},\\
       \mbox{$\sage{Upper(Bound3,digits)}$}&\text{when $10^{16}\leq X< 10^{33}$},\\
        \mbox{$\sage{Upper(Thresh33_1+Thresh33_2,digits)}$}&\text{when $X\geq 10^{33}$}.
    \end{cases}
  \end{equation*}
\end{thm}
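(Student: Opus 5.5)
The plan rests on the classical diagonalisation of the quadratic form. Since $1/[d,e]=\gcd(d,e)/(de)$ and $\gcd(d,e)=\sum_{q\mid d,\,q\mid e}\varphi(q)$, we get
\begin{equation*}
S_0(X)=\sum_{q\le X}\varphi(q)\Bigl(\sum_{\substack{d\le X\\ q\mid d}}\frac{\mu(d)}{d}\Bigr)^2
=\sum_{q\le X}\frac{\mu^2(q)\varphi(q)}{q^2}\Bigl(\sum_{\substack{m\le X/q\\ (m,q)=1}}\frac{\mu(m)}{m}\Bigr)^2 .
\end{equation*}
This gives the lower bound $S_0(X)\ge0$ at once. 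For the upper bound we need explicit control of the coprime-restricted Möbius sums $m_q(y)=\sum_{m\le y,(m,q)=1}\mu(m)/m$. I would write $G(y)=\sum_{q\le y}\mu^2(q)\varphi(q)q^{-2}\,(\cdots)$ and split the $q$-sum according to the size of $y=X/q$.

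For the ranges $422\le X<10^{8}$, and in fact for each finite range in the statement, the bound is essentially computational. Up to some height, say $X\le 10^8$, I would compute $S_0(X)$ directly from the diagonal formula, checking the maximum over every integer $X$; the sum only changes at integers. For larger finite ranges I would use the decomposition $q\le X/T$ versus $q>X/T$. In the part with $X/q$ small ($q>X/T$), I would compute $m_q(y)$ exactly, or bound it by tabulated maxima of $|m_q(y)|$ with weights $\sqrt{q}$ or $\prod_{p\mid q}$ factors; these are the constants $\sage{TH1}$-type quantities computed in the auxiliary lemmas. The tail sums in $q$ would be handled with the Dusart-type prime estimates (the $B(k,M)$ bounds) and the Euler products $P_1,P_2,P_3$. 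In the part with $X/q$ large, I would use the known explicit bound $|m_q(y)|\le \prod_{p\mid q}(1-1/p)^{-1}\cdot c/\log(y)$-type estimates (the constants $E_1,E_2$, $H_1,H_2$), so that those $q$ contribute $O(1/\log^2)$. Combining the pieces yields a bound $\Theta(T,T_0)$ valid uniformly on $[T,T_0)$.

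For $X\ge10^{33}$, the asymptotic regime, I would expand the square. The main term comes from writing $m_q(y)=\frac{q}{\varphi(q)}\cdot(\text{small})$ plus an error. Then Cauchy–Schwarz on the cross term gives the shape $2\Theta_1/Z+2\sqrt2H_1L_1/\sqrt Z+H_1^2L_2$ (the terms in \texttt{Thresh33\_1}). Separately, the contribution from $q$ near $X$ gives a sum of the form $\sum \mu^2(q)\varphi(q)/q^2$ over $X/Z<q\le X$. This is evaluated via the product constant (\texttt{APROD}) times $\log Z+$const, with an error $O(X^{-1/2})$ from the lemma providing $G_1,G_2$; this matches \texttt{Thresh33\_2}.

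The main obstacle is making the bound for $|m_q(y)|$ with coprimality uniform in $q$ and strong enough, averaged over squarefree $q$ with weight $\varphi(q)/q^2\cdot(q/\varphi(q))^2$. The weight is then $\mu^2(q)/\varphi(q)$, whose sum diverges like $\log$. To overcome this I would need a bound decaying like $1/\log(X/q)$, combined with the Euler-product factorization lemmas, so that the resulting series converges to an explicit small constant. The remaining difficulty is numerical: verifying the finite ranges by computation with rigorous interval arithmetic.
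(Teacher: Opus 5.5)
Your diagonalisation and the lower bound are right. Your treatment of the $q$ with $X/q$ large is also the paper's: squaring the two-part bound of Lemma~\ref{m4} and summing against $\mu^2(q)\varphi(q)/q^2$ over $q\le D$ is Lemma~\ref{Tail}, which gives $2\Theta_1D/X+2\sqrt2H_1L_1\sqrt{D/X}+H_1^2L_2$. The paper simply expands the square and treats the cross term with Corollary~\ref{Cor1}, so no Cauchy--Schwarz is needed. Note also that $\Theta_1$ of Lemma~\ref{Aux1} belongs to this range, not to the range of small $X/q$ as you say. The obstacle you single out, convergence of $\sum_q\mu^2(q)/(\varphi(q)\log^2(X/q))$, is real but costs only $H_1^2L_2\approx10^{-5}$.

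The genuine gap is the complementary range $D<q\le X$, i.e.\ $X/q<Z=X/D$, which carries most of $S_0(X)$. You replace its contribution by $\sum_{X/Z<q\le X}\mu^2(q)\varphi(q)/q^2=\mathrm{W}\log Z+O(X^{-1/2})$. This amounts to bounding $m_q(X/q)^2$ by $1$. Tabulated maxima do no better, since $m_q(j)=1$ whenever $\Pi(j)=\prod_{p\le j}p$ divides $q$. With these ingredients the theorem is out of reach. Since $\mathrm{W}=\prod_p(1-2p^{-2}+p^{-3})>0.4$ and $\Theta_1\approx2$, the competing terms $2\Theta_1/Z$ and $\mathrm{W}\log Z$ add up to more than $1.3$ for every $Z$, and to more than $1.7$ at $Z\approx81$. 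All the bounds to be proved are below $0.66$.

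The missing idea is to average $m_q(X/q)^2$ over $q$ exactly. Group $D<q\le X$ according to $j=[X/q]<Z$. Then $m_q(X/q)=m_q(j)=m_n(j)$ with $n=(q,\Pi(j))$, because every $m\le j$ has all its prime factors dividing $\Pi(j)$. Writing $q=n\ell$ with $(\ell,\Pi(j))=1$, the block $X/(j+1)<q\le X/j$ contributes at most
\begin{equation*}
\sum_{n\mid\Pi(j)}\frac{\mu^2(n)\varphi(n)}{n^2}\,m_n(j)^2\sum_{\substack{X/((j+1)n)<\ell\le X/(jn)\\ (\ell,\Pi(j))=1}}\frac{\mu^2(\ell)\varphi(\ell)}{\ell^2}.
\end{equation*}
The inner sum is evaluated by a mean value with a coprimality condition, Lemma~\ref{GetGq} (from Theorem~\ref{general++}). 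It equals $\mathrm{W}\,\mathrm{r}_{\Pi(j)}\log\frac{j+1}{j}+O^*\bigl(c\,\mathrm{t}_{\Pi(j)}(\sqrt{j+1}+\sqrt j)\sqrt{n/X}\bigr)$, the constant $\mathrm{s}_q$ cancelling in the difference.

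The block $j=1$ gives $\mathrm{W}\log2$. The blocks $2\le j\le80$ give $\mathrm{W}S_1^{(1)}$ with $S_1^{(1)}\approx0.333$, a finite computation over $n\mid\Pi(j)$. Your $\mathrm{W}\log(81/2)$ is about eleven times larger. This exact averaging is what makes $Z\approx81$ affordable and keeps the tail term $2\Theta_1/81\approx0.05$ small.

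The bounds on $[10^8,10^{33})$ come from the same block decomposition, with the $n$-sum truncated at $n<\eta^{k+1}10^u/j$ on a geometric grid of values of $X$. Your sketch for those ranges therefore has the same gap.

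Finally, for $422\le X<10^8$, re-evaluating the diagonal formula at every integer is far too costly. The paper instead uses the increment identity \eqref{eq:3}, which only needs a table of $m(t)$.
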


\begin{thm}
  \label{Mainbis}
  When $0\leq\ve\leq 1/25$, we have
  \begin{equation*}
    0\le S_\ve(X)=\sum_{\substack{d,e\le
        X}}\frac{\mu(d)\mu(e)}{[d,e]^{1+\ve}}
    \le
    \begin{cases} \mbox{$\sage{max(Upper(Omega1(R03,422,10^8),digits),Upper(Omega2(R03,S03,422,10^8),digits))}$} &\mbox{if $422\le X<10^{8}$ and $\ve\le 1/\sage{S03}$},\\
    \mbox{$\sage{max(Upper(Omega1(R13,10^8,10^(12)),digits),Upper(Omega2(R13,S13,10^8,10^(12)),digits))}$}&\mbox{if $10^8\le X<10^{12}$ and $\ve\le 1/\sage{S13}$},\\
       \mbox{$\sage{max(Upper(Omega1(R21,10^(12),10^(16)),digits),Upper(Omega2(R21,S21,10^(12),10^(16)),digits))}$} &\mbox{if $10^{12}\le X<10^{16}$ and $\ve\le 1/\sage{S21}$},\\
      \mbox{$\sage{max(Upper(Omega1(R22,10^(12),10^(16)),digits),Upper(Omega2(R22,S22,10^(12),10^(16)),digits))}$}&\mbox{if $10^{12}\le X<10^{16}$ and $\ve\le 1/\sage{S22}$},\\
            \mbox{$\sage{max(Upper(Omega1(R32,10^(16),10^(33)),digits),Upper(Omega2(R32,S32,10^(16),10^(33)),digits))}$}&\mbox{if $10^{16}\leq X< 10^{33} $ and $\ve\le 1/\sage{S32}$},\\
      \mbox{$ \sage{max(Upper(Omega1(R33,10^(16),10^(33)),digits),Upper(Omega2(R33,S33,10^(16),10^(33)),digits))}$}&\mbox{if $10^{16}\leq X<10^{33}$ and $\ve\le 1/\sage{S33}$},\\
        \mbox{$\sage{max(Upper(Omega1(R41,10^(33),10^(50)),digits),Upper(Omega2(R41,S41,10^(33),10^(50)),digits))}$}&\mbox{if $X\ge 10^{33} $ and $\ve\le 1/\sage{S41}$},\\
      \mbox{$ \sage{max(Upper(Omega1(R42,10^(33),10^(50)),digits),Upper(Omega2(R42,S42,10^(33),10^(50)),digits))}$}&\mbox{if $X\ge 10^{33} $ and $\ve\le 1/\sage{S42}$},\\
       \mbox{$\sage{max(Upper(Omega1(R43,10^(33),10^(50)),digits),Upper(Omega2(R43,S43,10^(33),10^(50)),digits))}$}&\mbox{if $X\ge 10^{33}$ and $\ve\le 1/\sage{S43}$}.
    \end{cases}
  \end{equation*}
  In fact, we can provide uniform upper bounds for $S_\ve(X)$ with $\ve\in[0,1/10]$. 

\end{thm}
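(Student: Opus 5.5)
Nonnegativity comes first and is cheap. Write $1/[d,e]^{1+\ve}=\gcd(d,e)^{1+\ve}/(de)^{1+\ve}$ and expand $n^{1+\ve}=\sum_{q\mid n}\varphi_{1+\ve}(q)$, where $\varphi_{1+\ve}=\mu\star \mathrm{Id}^{1+\ve}$ is positive on squarefree $q$. This turns the double sum into a sum of squares:
\begin{equation*}
S_\ve(X)=\sum_{q\le X}\varphi_{1+\ve}(q)\Bigl(\sum_{\substack{d\le X\\ q\mid d}}\frac{\mu(d)}{d^{1+\ve}}\Bigr)^2\ge 0 .
\end{equation*}
Substituting $d=qm$ with $(m,q)=1$, the inner sum becomes $\mu(q)q^{-1-\ve}\sum_{m\le X/q,(m,q)=1}\mu(m)m^{-1-\ve}$. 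Hence
\begin{equation*}
S_\ve(X)=\sum_{q\le X}\frac{\mu^2(q)\varphi_{1+\ve}(q)}{q^{2+2\ve}}\,M_{q,\ve}(X/q)^2,\qquad M_{q,\ve}(y)=\sum_{\substack{m\le y\\(m,q)=1}}\frac{\mu(m)}{m^{1+\ve}}.
\end{equation*}
The problem thus becomes bounding a weighted mean square of coprime, twisted M\"obius sums. This is the natural framework, and I expect the paper's auxiliary lemmas (the $\mu$-sum bounds with $\mathrm{H}_1,\mathrm{H}_2$, and the Euler products $P$, $F$ and $B(k,M)$ coming from Dusart-type prime estimates) to be used exactly here.

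For $\ve=0$ (Theorem~\ref{Main}, which I take as available), and for small $q$ relative to $X$, the key input is an explicit bound of the form
\begin{equation*}
|M_{q,\ve}(y)|\le \frac{q}{\varphi(q)}\cdot\frac{c}{\log y}\quad\text{or}\quad \le\prod_{p\mid q}\bigl(1+p^{-1/2}\bigr)\,\frac{c'}{\sqrt{y}}-\text{type terms},
\end{equation*}
together with the trivial bound $|M_{q,\ve}(y)|\le 1+\ve$-dependent constants. I would split the range of $q$ at $X/T$ and $X/T_0$, matching the thresholds in $\Theta(T,T_0)$:
\begin{itemize}
\item For large $q$, where $y=X/q$ is small, use the finitely computed maxima ($\mathrm{mx}$-type constants).
\item For medium $q$, use the logarithmic savings.
\item For the tail, the remaining weight $\sum_q \mu^2(q)\varphi(q)q^{-2}\times(\dots)$ is an Euler product computed up to $M=10^8$, with the rest controlled by $B(k,M)$.
\end{itemize}

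For $\ve>0$ the main difference is that $\varphi_{1+\ve}(q)/q^{2+2\ve}$ is close to, but not equal to, the $\ve=0$ weight, and $M_{q,\ve}$ differs from $M_{q,0}$. I would write
\begin{equation*}
m^{-\ve}=1-\ve\log m+\dots,
\end{equation*}
or more robustly use partial summation $M_{q,\ve}(y)=y^{-\ve}M_{q,0}(y)+\ve\int_1^y M_{q,0}(t)t^{-1-\ve}\,dt$. The square then gives
\begin{itemize}
\item the main term $\Theta(T,T_0)$ from Theorem~\ref{Main},
\item cross terms weighted by $\ve$, which require bounds on $\sum\mu(m)\log m/m$ (the log-lemmas, the $\mathrm{loglemma}$ constants),
\item terms in $\ve^2$.
\end{itemize}
This explains the structure of $\Omega_1$, which has a main term $\Theta$ plus terms linear and quadratic in $1/R$. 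To handle $X$ large relative to $e^{1/\ve}$, I would introduce a second parameter: when $\ve\ge 1/R$, the factor $X^{-\ve}$ or $T^{-1/R}$ decays and gives $\Omega_2$. For $\ve\le 1/S$ one takes $\max(\Omega_1,\Omega_2)$ over the two subranges $\ve\le 1/R$ and $1/R<\ve\le 1/S$, with $R$ optimized numerically.

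The main obstacle is the cross terms. Making the $\ve$-derivative contribution explicit needs uniform bounds on coprimality-twisted sums $\sum_{(m,q)=1}\mu(m)\log m/m^{1+\ve}$, with controlled dependence on $q$ through Euler factors. Summing these against the weights over $q$ without losing the small constant is delicate; I would try Cauchy–Schwarz with a parameter $c$, in the form $2ab\le c a^2+b^2/c$, which is exactly the factor $(1+1/c)$ appearing in $\Omega_1$.
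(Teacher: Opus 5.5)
Your proof of $S_\ve(X)\ge 0$ is correct and coincides with the paper's identity \eqref{classic}. Your two-range device in $\ve$ (splitting at $\ve=1/R$) also matches the paper. The upper bound, however, is only a roadmap, and it lacks the idea the paper's argument rests on.

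In your split $M_{q,\ve}(y)=y^{-\ve}M_{q,0}(y)+\ve\int_1^y M_{q,0}(t)t^{-1-\ve}\,dt$, the integral term is not a small correction. As $y\to\infty$ it tends to $L_q=q^{1+\ve}/(\varphi_{1+\ve}(q)\zeta(1+\ve))\approx \ve q/\varphi(q)$, the full series $\sum_{(m,q)=1}\mu(m)m^{-1-\ve}$. So its cross term with $y^{-\ve}M_{q,0}(y)$ is first order in $\ve$ with a coefficient of order one. The paper does not bound this term; it evaluates it. It writes $M_{\ell,\ve}=n_\ell+L_\ell$ as in \eqref{vv1}, sets $L=\ell m$, and uses that $\sum_{m\mid L}\mu(m)$ vanishes unless $L=1$. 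This gives
\begin{align*}
\sum_{\ell\le X}\frac{\mu^2(\ell)}{\ell^{1+\ve}}\,n_\ell\Big(\frac{X}{\ell};1+\ve\Big)&=1-\frac{G(X;1+\ve)}{\zeta(1+\ve)},\\
S_\ve(X)&=\sum_{\ell\le X}\frac{\mu^2(\ell)\varphi_{1+\ve}(\ell)}{\ell^{2+2\ve}}\,n_\ell\Big(\frac{X}{\ell};1+\ve\Big)^2+\frac{2}{\zeta(1+\ve)}-\frac{G(X;1+\ve)}{\zeta(1+\ve)^2}.
\end{align*}
Thus the $L_\ell^2$ contribution $G/\zeta^2$, which enters your expansion with a plus sign, nets to a minus sign. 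The range $\ell>X/10.9$ of the quadratic term is then absorbed into it. Lemma~\ref{zeta} and $G(X/10.9;1+\ve)\ge 9.1$ (for $X\ge 10.9\cdot 10^8$) turn this into $2\ve-9.1\,\ve^2e^{-2\gamma\ve}$. That is the source of the terms $2/R-9.1/(R^2e^{2\gamma/R})$ in $\Omega_1$ and $2/S-\dots$ in $\Omega_2$. If you bound this cross term termwise, for example with Lemma~\ref{m4}, you discard the exact cancellation $\sum_{\ell\le X}\mu^2(\ell)\ell^{-1}m_\ell(X/\ell)=1$ and inflate the coefficient of $\ve$.

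After this identity, what remains is $n_\ell(X/\ell;1+\ve)=(X/\ell)^{-\ve}\bigl(m_\ell(X/\ell)+\ve\Delta_\ell(X/\ell,\ve)\bigr)$. The second missing ingredient is an explicit, $q$-uniform control of the remainder $\Delta_q$, together with sign information. The paper imports this as Lemma~\ref{m1e} from the authors' earlier work. There $|\Delta_q(y,\ve)|$ is bounded by $0.03\,g_2(q)q^\xi\varphi_\xi(q)^{-1}/\log y$ (for $y\ge 10^{12}$) plus a term of size $\sqrt{q}\,\varphi_{1/2}(q)^{-1}2^\ve/\sqrt{y}$. Moreover $\Delta_q(y,\ve)\le 0$ for $y<10.9$, and also for $y<47$ under a condition on $q$.

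The paper combines this with further computed facts:
\begin{itemize}
\item $m_\ell(t)\ge0$ for $t<5$;
\item $m_\ell(t)\ge-1/30$ on $[5,10.9)$;
\item computed bounds for $m_\ell(t)$ on $[10.9,47)$.
\end{itemize}
These are what make $2\ve A(X)$ and $\ve^2B(X)$ small in Lemmas~\ref{A_est} and~\ref{B_est}. Your proposal supplies none of this.

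You also misread several constants. The input you suggest, bounds on $\sum_{(m,q)=1}\mu(m)\log m/m^{1+\ve}$, is not what is used. The logarithmic constants come from the mean values $L_1$ and $L_2$ of Corollaries~\ref{auxnu1} and~\ref{auxnu}, weighted by $1/\log(X/\ell)$ and $1/\log^2(X/\ell)$, which arise from the $1/\log$ part of the bound on $\Delta_\ell$. The parameter $c$ enters through $(a+b)^2\le(1+c^{-1})(a^2+cb^2)$, applied to the two parts of the $\Delta$-bound inside $B(X)$, not in the cross term. Likewise, $T$ and $T_0$ delimit the range of $X$, not cut-points in $q$.

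As it stands, your plan reproduces the shape of $\Omega_1$ and $\Omega_2$ but cannot yield the stated constants.
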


These results were long out of reach, but recent work of H. Helfgott and the authors has helped to overcome the main obstacles.

It is noteworthy that Theorem 1.2 is only slightly weaker than Theorem 1.1 for the specific ranges of $X$. It may even be the case that $S_\ve(X)\leq S_0(X)$
for all $X>0$, which would make Theorem \ref{Mainbis} an immediate consequence of Theorem \ref{Main}; however, we were unable to establish this.

\noindent\phantom{x}\textbf{Notation.}
Our notation is classical: we use
\begin{equation}
  \label{eq:4}
  m_q(X;1+\ve)=\sum_{\substack{\ell\le X\\ (\ell,q)=1}}\frac{\mu(\ell)}{\ell^{1+\ve}},\quad m_q(X)=m_q(X;1),
  \quad
  \text{and}
  \quad
  m(X)=m_1(X).
\end{equation}
Furthermore, for two functions $f,g:\mathbb{C}\to\mathbb{R}_{>0}$, $f=O^*(g)$ means that $|f|\le g$.

We consider the {\em Euler $\varphi_s$ function}: let $s$ be any complex number, we define $\varphi_s:\mathbb{Z}_{>0}\to\mathbb{C}$ as $q\mapsto q^s\prod_{p|q}(1-\frac{1}{p^s})$. We also consider the generalised sum of divisors function $\sigma_s:\mathbb{Z}_{>0}\to\mathbb{C}$ as $q\mapsto q^s\prod_{p|q}(1+\frac{1}{p^s})$

On the other hand, for $\xi=1-1/\log(10^{12})$, we consider the following two functions 
   \begin{align} \label{defg0g2}
  g_0(q)=\prod_{2|q}\frac{\sqrt{3}(\sqrt{2}-1)}{2},\quad\qquad g_2(q)=\prod_{2|q}2.9506\bigg(1-\frac{1}{2^\xi}\bigg).
  \end{align}
  Finally, $\gamma$ denotes Euler's constant $0.5772156649...$ .
  
 \noindent\phantom{x}\textbf{Organisation.} The article is structured as follows. In $\S$\ref{mq}, we review classical and recent estimates for the function $m_q(X)$. $\S$\ref{mq_ve} establishes an estimate for $m_q(X;1+\ve)$, providing the main tool needed to treat the perturbed case $\ve>0$ of $S_\ve(X)$. In $\S$\ref{meanvalues}, we develop the analytic framework required for the proofs of our main results; this includes general mean-value estimates for multiplicative functions, bounds for Euler products, and several auxiliary lemmas. The proofs of these auxiliary results are deferred to $\S$\ref{auxiliaries}.
$\S$\ref{S_Main} is devoted to the proof of Theorem \ref{Main}.
$\S$\ref{S_Mainbis} contains the proof of Theorem \ref{Mainbis}; there we work in a slightly more general setting than that stated in the theorem, allowing flexibility in the choice of $\ve$.
 

\section{On the summatory function $m_q(X;1+\ve)$ at $\ve=0$}\label{mq}

In \cite[Lemma 10.2]{Granville-Ramare*96}, the following handy estimate is proved.
\begin{lem}\label{BOUND} For any $X>0$ and $q\in\mathbb{N}$,
\begin{equation*}|m_q(X)|\le 1.
\end{equation*}
\end{lem}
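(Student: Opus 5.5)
The plan is to prove $|m_q(X)|\le 1$ by induction on the number of prime factors of $q$, using an exact identity that links $m_q$ to $m_{q/p}$.

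\textbf{Base case $q=1$.} We may assume $X\ge1$, since otherwise the sum is empty. Summing the identity $\sum_{d\mid n}\mu(d)=[n=1]$ over $n\le X$ gives
\begin{equation*}
\sum_{d\le X}\mu(d)\Big\lfloor \frac{X}{d}\Big\rfloor=1.
\end{equation*}
Write $\lfloor X/d\rfloor = X/d-\{X/d\}$. This yields
\begin{equation*}
X\,m(X)=1+\sum_{d\le X}\mu(d)\{X/d\}.
\end{equation*}
The term $d=1$ contributes $\{X\}$, and every term with $2\le d\le X$ has absolute value less than $1$. Hence
\begin{equation*}
|X\,m(X)|\le 1+\{X\}+(\lfloor X\rfloor-1)=X,
\end{equation*}
and therefore $|m(X)|\le 1$.

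\textbf{Reductions for general $q$.} Since only squarefree $\ell$ contribute and the condition $(\ell,q)=1$ depends only on the primes dividing $q$, we may assume $q$ is squarefree.

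\textbf{Inductive step.} Let $p\mid q$ and set $q'=q/p$. Every $\ell$ coprime to $q'$ factors uniquely as $\ell=p^k\ell_1$ with $(\ell_1,q)=1$, and $\mu$ forces $k\in\{0,1\}$. Splitting accordingly gives
\begin{equation*}
m_{q'}(X)=m_q(X)-\frac1p\,m_q(X/p).
\end{equation*}
Iterating this relation expresses $m_q(X)$ as
\begin{equation*}
m_q(X)=\sum_{k\ge0}p^{-k}m_{q'}(X/p^k).
\end{equation*}
The series is a finite sum, because $m_{q'}(X/p^k)=0$ once $p^k>X$. However, the triangle inequality combined with the inductive hypothesis only gives
\begin{equation*}
|m_q(X)|\le \sum_{k\ge0}p^{-k}=\frac{p}{p-1},
\end{equation*}
which is not good enough.

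\textbf{Main obstacle: a sharper route.} The obstacle is that the constant must not grow with the number of primes dividing $q$. So I would instead mimic the base-case argument directly for general $q$. Start from
\begin{equation*}
\sum_{\substack{n\le X\\ (n,q)=1}}\ \sum_{d\mid n}\mu(d)=1.
\end{equation*}
Swap the order of summation and write $n=dm$ with $(d,q)=(m,q)=1$, giving
\begin{equation*}
\sum_{\substack{d\le X\\(d,q)=1}}\mu(d)\,N_q(X/d)=1,
\qquad N_q(y)=\#\{m\le y:(m,q)=1\}.
\end{equation*}
Now $N_q(y)=\frac{\varphi(q)}{q}\,y+r_q(y)$, but $r_q(y)$ is not bounded by $1$ when $q$ has many prime factors. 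That is exactly the difficulty.

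\textbf{Fix via Granville--Ramaré.} To get around this, I would use the identity in the opposite direction. Expand
\begin{equation*}
\1_{(\ell,q)=1}=\sum_{\delta\mid(\ell,q^\infty)}\ \cdots
\end{equation*}
and obtain the exact relation
\begin{equation*}
\sum_{\substack{d\le X\\(d,q)=1}}\mu(d)\Big\lfloor \frac{X}{d}\Big\rfloor_q=1,
\end{equation*}
where $\lfloor y\rfloor_q$ counts the integers $m\le y$ all of whose prime factors divide $q$. The key step is then to control the weighted counting $\sum_{m\le y,\,m\mid q^\infty}1/m$ against $\frac{q}{\varphi(q)}$. I expect this step, the choice of the right companion sum so that the error terms telescope to at most $X$, to be the main obstacle. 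It is precisely where the argument of \cite[Lemma 10.2]{Granville-Ramare*96} is needed.
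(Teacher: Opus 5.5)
\textbf{Verdict: genuine gap for general $q$.}

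Your base case $q=1$ is correct. You also diagnose correctly why the recursion $m_q(X)=\sum_{k\ge0}p^{-k}m_{q/p}(X/p^k)$ fails, and why the route through $N_q(X/d)$ fails.

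But the general case is never proved. The decisive step is left as a placeholder: the expansion of $\1_{(\ell,q)=1}$ ends in ``$\cdots$''. Moreover, the ``exact relation'' you state is false. With $\lfloor y\rfloor_q=\#\{m\le y:\ m\mid q^\infty\}$, each $n\le X$ factors uniquely as $n=dm$ with $(d,q)=1$ and $m\mid q^\infty$. So your left-hand side equals $\sum_{n\le X}\mu(n')$, where $n'$ is the part of $n$ coprime to $q$. For $q=2$, $X=4$ it is $\lfloor 4\rfloor_2-\lfloor 4/3\rfloor_2=3-1=2\neq1$.

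Your suggested next step also points the wrong way. Comparing $\sum_{m\le y,\,m\mid q^\infty}1/m$ with $q/\varphi(q)$ can only give back the bound $q/\varphi(q)$ that you already found insufficient. The paper gives no argument of its own and simply quotes \cite[Lemma 10.2]{Granville-Ramare*96}. As a proof, your proposal stops exactly where the content is.

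\textbf{The missing idea} is to put the coprimality condition on the divisor, not on $n$. For every $n\ge1$, $\sum_{d\mid n,\,(d,q)=1}\mu(d)$ equals $1$ if $n\mid q^\infty$ and $0$ otherwise. Summing over \emph{all} $n\le X$ therefore gives
\[
\sum_{\substack{d\le X\\ (d,q)=1}}\mu(d)\Big\lfloor\frac{X}{d}\Big\rfloor=A_q(X):=\#\{n\le X:\ n\mid q^\infty\},
\qquad\text{hence}\qquad
X\,m_q(X)=A_q(X)+\sum_{\substack{d\le X\\ (d,q)=1}}\mu(d)\Big\{\frac{X}{d}\Big\}.
\]
Your base-case counting now goes through verbatim:
\begin{itemize}
\item the term $d=1$ is $\{X\}$;
\item the other $N_q(X)-1$ terms each lie in $(-1,1)$;
\item an integer that divides $q^\infty$ and is coprime to $q$ must be $1$, so $A_q(X)+N_q(X)\le\lfloor X\rfloor+1$.
\end{itemize}
Therefore $X m_q(X)\le A_q(X)+\{X\}+N_q(X)-1\le X$. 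In the other direction, $X m_q(X)\ge 1+\{X\}-(\lfloor X\rfloor-1)\ge -X$. The possibly large main term $A_q(X)$ is exactly offset by the shortage of error terms, and this is what a comparison with $q/\varphi(q)$ cannot capture.
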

By \cite[Lemma 5.10]{Helfgott*30} and
\cite[Eq. (5.79)]{Helfgott*30} and then \cite[Cor. 1.2]{Ramare-Zuniga*25-1} and \cite[Lemma 5.17]{Helfgott*30}, we may in fact derive bounds that exhibit convergence.
\begin{lem}
  \label{m1}
  We have 
\begin{align*}
\phantom{xxxxxxxxxxxx}|m(X)|&\le \sqrt{\frac{2}{X}}&&\text{ for }\quad 0<X\le 10^{14},\phantom{xxxxxxxxxxxx}\\
|m_2(X)|&\le \sqrt{\frac{3}{X}}&&\text{ for }\quad 0<X\le 10^{12}.
\end{align*}
\end{lem}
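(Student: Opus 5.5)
The plan is to combine a trivial bound for small $X$, a reduction to integer arguments, and a direct numerical verification. This is essentially what the cited results of Helfgott and of Ramar\'e--Zuniga provide, and I would assemble the lemma from them in the same way.

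\emph{Step 1 (small $X$).} For $X\le 2$, Lemma \ref{BOUND} gives $|m(X)|\le 1\le\sqrt{2/X}$. Similarly, for $X\le 3$ it gives $|m_2(X)|\le 1\le\sqrt{3/X}$. So only the larger ranges remain.

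\emph{Step 2 (reduction to integers).} Both $m(X)$ and $m_2(X)$ are step functions, constant on each interval $[n,n+1)$, while $\sqrt{c/X}$ decreases. It therefore suffices to check, for each integer $n$ in the relevant range,
\begin{equation*}
|m(n)|\le\sqrt{\frac{2}{n+1}}\quad(n<10^{14}),\qquad |m_2(n)|\le\sqrt{\frac{3}{n+1}}\quad(n<10^{12}).
\end{equation*}
For the first inequality this is the content of \cite[Lemma 5.10]{Helfgott*30} together with \cite[Eq. (5.79)]{Helfgott*30}. There $m(n)$ is computed exactly with interval arithmetic, using a segmented sieve for $\mu$, up to $10^{14}$. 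I would quote that computation rather than redo it.

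\emph{Step 3 (the $m_2$ bound).} Splitting according to the power of $2$ dividing a squarefree $\ell$ gives $m(X)=m_2(X)-\tfrac12 m_2(X/2)$, and hence $m_2(X)=\sum_{k\ge0}2^{-k}m(X/2^k)$. Inserting the bound from Step 2 yields only
\begin{equation*}
|m_2(X)|\le\frac{\sqrt{2/X}}{1-2^{-1/2}},
\end{equation*}
which is too weak. So the identity is useful only as a computational device, not as a proof of the constant $\sqrt3$. Instead I would use the direct verification in \cite[Cor. 1.2]{Ramare-Zuniga*25-1}, which tabulates the sums over odd $\ell$ up to $10^{12}$. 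Its output, together with \cite[Lemma 5.17]{Helfgott*30}, which handles the passage from the sampled data to all real $X$, gives $|m_2(X)|\le\sqrt{3/X}$ throughout $0<X\le10^{12}$.

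The main obstacle is purely computational. One needs a rigorous, rounding-safe evaluation of $m(n)$ and $m_2(n)$ at every integer up to $10^{14}$ (respectively $10^{12}$), which is costly. It is also delicate near small $n$, where the margin between $|m(n)|$ and $\sqrt{2/(n+1)}$ is smallest. I would handle the range $n\le 10^6$ by exact rational arithmetic and rely on the cited large-scale computations beyond it.
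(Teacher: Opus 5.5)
Your small-$X$ step, the reduction to integer arguments, and the first inequality are fine. They agree with the paper, whose proof of Lemma~\ref{m1} is only a quotation of Helfgott's computation.

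The gap is in Step~3, and since the lemma rests entirely on citations, the sourcing there is the whole argument. In the paper the four references are split between two lemmas. \cite[Lemma 5.10]{Helfgott*30} and \cite[Eq. (5.79)]{Helfgott*30} supply Lemma~\ref{m1}, that is, both the $\sqrt{2/X}$ and the $\sqrt{3/X}$ bounds. \cite[Cor. 1.2]{Ramare-Zuniga*25-1} and \cite[Lemma 5.17]{Helfgott*30} are what produce Lemma~\ref{m2}: $|m(X)|\le 0.010032/\log X$ for $X\ge 617\,990$ and $|m_2(X)|\le 0.0296/\log X$ for $X\ge 5379$. You attribute to these two results a tabulation of odd sums up to $10^{12}$ plus an interpolation step, which is not what the paper uses them for. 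Bounds of shape $c/\log X$ also cannot give your claim. Since $\sqrt{X}/\log X$ is increasing, $0.0296/\log X>\sqrt{3/X}$ for every $X\ge 10^6$, so no bound of that type reaches $|m_2(X)|\le\sqrt{3/X}$ on $[10^6,10^{12}]$. As written, the second inequality is unsupported.

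To close the gap, either take the $m_2$ bound from Helfgott's computation, as the paper does, or verify it yourself. Compute $m_2(n)$ rigorously for every integer $n\le 10^{12}$ and check $|m_2(n)|\le\sqrt{3/(n+1)}$. You can get the values from exact values of $m$ through your identity $m_2(X)=\sum_{k\ge0}2^{-k}m(X/2^k)$, or directly by sieving over odd integers.

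A minor point in Step~2: taking $n<10^{14}$ with the bound $\sqrt{2/(n+1)}$ covers only $X<10^{14}$. The endpoint also needs $|m(10^{14})|\le\sqrt{2}\cdot 10^{-7}$, and likewise $|m_2(10^{12})|\le\sqrt{3}\cdot 10^{-6}$.
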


\begin{lem}
  \label{m2}
  \begin{align*}
\phantom{xxxxxxxxxxxx}|m(X)|&\le \frac{\sage{Trunc(H1,6)}}{\log X}&&\text{ for }\quad X \ge 617\,990,\phantom{xxxxxxxxxxxx}\\
|m_2(X)|&\le \frac{\sage{Trunc(H2,4)}}{\log X}&&\text{ for }\quad X\ge 5379.
\end{align*}
\end{lem}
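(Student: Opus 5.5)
The first inequality, $|m(X)|\le 0.010032/\log X$ for $X\ge 617\,990$, I would not prove from scratch. I would take it from the literature as the text preceding Lemma~\ref{m1} indicates: \cite[Cor.~1.2]{Ramare-Zuniga*25-1} gives a bound of the form $|m(X)|\le c/\log X$ for $X$ beyond some threshold $X_1$. The plan is then:
\begin{itemize}
\item check that its constant is at most $0.010032$;
\item cover the intermediate range $617\,990\le X\le X_1$ with Lemma~\ref{m1}, since $\sqrt{2/X}\le 0.010032/\log X$ as soon as $X$ is large enough;
\item cover whatever gap remains by a direct computation of $m(X)$.
\end{itemize}
The function $X\mapsto \sqrt{2/X}\,\log X$ is decreasing for $X\ge e^2$, so a single check at the switching point suffices. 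Since $m(X)$ is a step function, in the computed range it is enough to verify $|m(n)|\log(n+1)\le 0.010032$ at the integers $n$. The threshold $617\,990$ is presumably exactly the last point where this check fails.

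For the second inequality, on $m_2(X)$, I would use the elementary identity relating $m_2$ to $m$. From $\sum_{(\ell,2)=1}\mu(\ell)\ell^{-s}=(1-2^{-s})^{-1}\sum_\ell \mu(\ell)\ell^{-s}$, one gets
\begin{equation*}
m_2(X)=\sum_{k\ge 0}\frac{m(X/2^k)}{2^k}.
\end{equation*}
I would split this sum according to the size of $X/2^k$:
\begin{itemize}
\item where $X/2^k\ge 617\,990$, use the first inequality;
\item where $X/2^k$ is at most $10^{14}$, use $\sqrt{2/(X/2^k)}$ from Lemma~\ref{m1};
\item for the small tail, use $|m|\le 1$ from Lemma~\ref{BOUND}.
\end{itemize}
The factor $1/(1-1/2)$ together with $\log(X/2^k)<\log X$ then loses roughly a factor $2$–$3$, consistent with $0.0296\approx 2.95\times 0.010032$. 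To make this work I would bound $\sum_k 2^{-k}/\log(X/2^k)$ by $(1+o(1))\,2/\log X$ for very large $X$. For moderate $X$ I would exploit the $\sqrt{\cdot}$ decay, or, more simply, rely on Lemma~\ref{m1}'s bound $\sqrt{3/X}$ for $m_2$ up to $10^{12}$: it gives $\sqrt{3/X}\le 0.0296/\log X$ for $X$ above a moderate size, and a direct computation handles $5379\le X$ up to that point.

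The main obstacle is the uniformity of the $m_2$ bound for $X$ between about $10^{12}$ and the point where the asymptotic factor $2/(1-\log 2/\log X)$ drops below $0.0296/0.010032\approx 2.95$. Note that $2/(1-\log 2/\log X)\le 2.95$ already for $X\ge 10^{2}$ or so, so the real constraint is handling the terms with $X/2^k<617\,990$ sharply. For $X>10^{12}$, however, those terms carry a weight $2^{-k}\le 617\,990/X$, tiny compared with $1/\log X$, so I expect the verification to be a routine but careful numerical check. This mirrors the derivation in \cite[Lemma~5.17]{Helfgott*30}, which I would follow for the bookkeeping.
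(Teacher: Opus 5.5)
Your proposal is correct and follows the paper's route: the paper's only argument is to cite \cite[Cor.~1.2]{Ramare-Zuniga*25-1} for the bound on $m$ and \cite[Lemma~5.17]{Helfgott*30} for the bound on $m_2$, and your reduction via $m_2(X)=\sum_{k\ge0}2^{-k}m(X/2^k)$, with Lemma~\ref{m1} and a finite computation for $X\le 10^{12}$, is exactly the kind of argument the latter citation stands for. One correction, which affects only your heuristics and not the proof: the loss factor $\sum_k 2^{-k}\log X/\log(X/2^k)$ is about $2.055$ at $X=10^{12}$ and decreases towards $2$ (it is not $2/(1-\log 2/\log X)$), so with input $0.010032$ your argument gives roughly $0.0207/\log X$ for $X\ge 10^{12}$; the ratio $0.0296/0.010032\approx 2.95$ is therefore not an intrinsic loss but a quotient of constants from different sources, and $0.0296\approx 2.055\times 0.0144$ is consistent with the same derivation having been run on an older, weaker bound for $m$.
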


By combining lemmas \ref{m1} and \ref{m2} and using that the function $t\geq 10^{12}\mapsto t^{1-\xi}/\log t$ is increasing, we arrive at
 
\begin{lem}
  \label{m3}
   Let $y\ge 1$ and any $t\in(0,y]$. Then
  \begin{align}\label{e1}
    |m(t)|&\le \sqrt{\frac{2}{t}}
    +\sage{Trunc(H1,6)}\cdot\frac{y^{1-\xi}}{\log y}\frac{\1_{y\ge 10^{12}}}{t^{1-\xi}},\\
    \label{e2}
    |m_2(t)|&\le \sqrt{\frac{3}{t}}
    +\sage{Trunc(H2,4)}\cdot\frac{y^{1-\xi}}{\log y}\frac{\1_{y\ge 10^{12}}}{t^{1-\xi}}
  \end{align}
\end{lem}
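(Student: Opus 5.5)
The bound follows by a case split on the size of $t$, and the argument for \eqref{e2} is the same as for \eqref{e1} with $\sqrt{2}$ replaced by $\sqrt{3}$, the threshold $10^{14}$ by $10^{12}$, and the constant $\sage{Trunc(H1,6)}$ by $\sage{Trunc(H2,4)}$. I treat \eqref{e1}.

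\textbf{Case $t\le 10^{12}$.} Lemma~\ref{m1} gives directly $|m(t)|\le\sqrt{2/t}$. The second term on the right of \eqref{e1} is nonnegative, so the inequality holds whatever the value of $y$. For $m_2$ the range in Lemma~\ref{m1} is exactly $0<t\le 10^{12}$, which is why I split at $10^{12}$ rather than at $10^{14}$.

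\textbf{Case $t>10^{12}$.} Since $t\le y$, we have $y\ge 10^{12}$, so the indicator equals $1$. As $t\ge 617\,990$ (and $t\ge 5379$ for $m_2$), Lemma~\ref{m2} gives
\[
|m(t)|\le \frac{\sage{Trunc(H1,6)}}{\log t}=\sage{Trunc(H1,6)}\cdot\frac{t^{1-\xi}}{\log t}\cdot\frac{1}{t^{1-\xi}}.
\]
It remains to check that $t^{1-\xi}/\log t\le y^{1-\xi}/\log y$ for $10^{12}\le t\le y$, i.e.\ that $u\mapsto u^{1-\xi}/\log u$ is nondecreasing on $[10^{12},\infty)$. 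The derivative has the sign of $(1-\xi)\log u-1$. With $1-\xi=1/\log(10^{12})$ this is $\log u/\log(10^{12})-1\ge 0$ for $u\ge 10^{12}$. This is precisely where the choice $\xi=1-1/\log(10^{12})$ comes from. Adding the nonnegative term $\sqrt{2/t}$ then gives \eqref{e1}.

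Nothing here is a real obstacle. The only points needing care are placing the split at $10^{12}$ so that both parts of Lemma~\ref{m1} apply, and noting that the monotonicity holds exactly from $10^{12}$ on, with derivative zero at that endpoint.
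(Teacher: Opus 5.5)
Your proof is correct and is essentially the paper's argument: use Lemma~\ref{m1} for $t\le 10^{12}$, where the threshold is lowered from $10^{14}$ for $m$ so that both bounds have the same shape, and use Lemma~\ref{m2} for $t>10^{12}$. In the second range, you then transfer from $t$ to $y$ using that $u\mapsto u^{1-\xi}/\log u$ is increasing on $[10^{12},\infty)$, which holds exactly because $\xi=1-1/\log(10^{12})$.
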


By using the following identity, established for example in \cite[Eq. (5.73)]{Helfgott*30}, one can study a sum with coprimality conditions from the same sum without such conditions.  

 \begin{lem}\label{coprime}
 We have the identity $ \sum_{d|q^{\infty},d|\ell}\mu\left(\frac{\ell}{d}\right)=\mu(\ell)\mathds{1}_{\{(\ell,q)=1\}}(\ell)$.
Hence, for any function $h:\mathbb{Z}_{>0}\to\mathbb{C}$, we have the formal identity
 \begin{equation*}
 \sum_{\substack{\ell\\(\ell,q)=1}}\frac{\mu(\ell)}{\ell}h(\ell)=\sum_{d|q^\infty}\frac{1}{d}\sum_{\ell}\frac{\mu(\ell)}{\ell}h(d\ell).
 \end{equation*}
 \end{lem}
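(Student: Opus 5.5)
The plan is to prove the arithmetic identity first and then deduce the formal identity from it by summing. Fix $q$ and $\ell$. The divisors $d$ of $\ell$ with $d\mid q^\infty$ are exactly the divisors of $\ell_q$, the largest divisor of $\ell$ all of whose prime factors divide $q$. Write $\ell=\ell_q\ell'$ with $(\ell',q)=1$, so that also $(\ell_q,\ell')=1$. Then
\begin{equation*}
\sum_{d\mid q^\infty,\, d\mid \ell}\mu\Big(\frac{\ell}{d}\Big)=\sum_{d\mid \ell_q}\mu\Big(\frac{\ell_q}{d}\,\ell'\Big)=\mu(\ell')\sum_{d'\mid \ell_q}\mu(d'),
\end{equation*}
where we substituted $d'=\ell_q/d$ and used multiplicativity of $\mu$, which applies since $(d',\ell')=1$. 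The inner sum is $\1_{\ell_q=1}$. The right-hand side is therefore $\mu(\ell)$ when $(\ell,q)=1$, because then $\ell'=\ell$, and it is $0$ otherwise. This is the first claim.

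For the formal identity, take the left-hand side $\sum_{(\ell,q)=1}\mu(\ell)h(\ell)/\ell$ and replace $\mu(\ell)\1_{(\ell,q)=1}$ by the divisor sum just computed. This gives
\begin{equation*}
\sum_{\ell}\frac{h(\ell)}{\ell}\sum_{d\mid q^\infty,\, d\mid\ell}\mu\Big(\frac{\ell}{d}\Big).
\end{equation*}
Now write $\ell=dm$ and interchange the order of summation. The result is $\sum_{d\mid q^\infty}\frac1d\sum_m \frac{\mu(m)}{m}h(dm)$, which after renaming $m$ as $\ell$ is exactly the right-hand side.

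The only point needing care is the interchange of summations. Since the identity is stated formally, the rearrangement is understood termwise: it is a bijection $(d,m)\leftrightarrow(d,dm)$ between index sets, and each $\ell$ has finitely many $d$. If an analytic statement is wanted, absolute convergence of $\sum_{d\mid q^\infty}\sum_m |h(dm)|/(dm)$ would justify the interchange. This is the case, for example, for $h$ a truncation $\1_{\ell\le X}$ with $X$ finite, since then all sums are finite.
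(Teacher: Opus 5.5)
Your proof is correct and complete. The divisors $d\mid\ell$ with $d\mid q^\infty$ are exactly the divisors of the $q$-part $\ell_q$, and the coprimality $(\ell_q/d,\ell')=1$ justifies splitting $\mu$. The sum $\sum_{d'\mid \ell_q}\mu(d')=\1_{\ell_q=1}$ then gives the first identity. The second follows by reindexing $\ell=dm$, which, as you note, is a bijection of index sets. In the truncated case $h=\1_{\ell\le X}$ all sums are finite, and this is the only form in which the paper uses the lemma (to express $m_q$ through $m$, and in deriving the increment formula for $S_0(\ell)-S_0(\ell-1)$).

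The paper gives no proof of its own; it refers to Helfgott's book, Eq. (5.73). The standard argument there is via Euler products:
$\prod_{p\nmid q}(1-p^{-s})=\prod_{p\mid q}(1-p^{-s})^{-1}\cdot\prod_{p}(1-p^{-s})$ for $\Re s>1$. This says that $\mu\cdot\1_{(\cdot,q)=1}$ is the Dirichlet convolution of $\mu$ with the indicator of the integers $d\mid q^\infty$, and the coefficient identity follows by uniqueness of Dirichlet coefficients. Your prime-power factorisation of $\ell$ is the local, purely combinatorial form of the same fact. It has the small advantage of needing no convergence of Dirichlet series at all.
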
 
 
We can now combine estimates \eqref{e1} and \eqref{e2}, distinguish the cases $(2,q)=1$ or $2|q$ and use Lemma \ref{coprime} to obtain the following estimation subject to coprimality conditions.
 
\begin{lem}
  \label{m4}
  Let $X>0$ and $q\in\mathbb{Z}_{>0}$. We have
  \begin{equation*}
    |m_q(X)|\le \frac{g_0(q)\sqrt{q}}{\varphi_{\frac{1}{2}}(q)}\sqrt{\frac{2}{X}}
    +\sage{Trunc(H1,6)}\cdot\frac{g_2(q)q^\xi}{\varphi_{\xi}(q)}\frac{\1_{X\ge 10^{12}}}{\log X},
  \end{equation*}
  where $g_0$ and $g_2$ are defined in \eqref{defg0g2}.
\end{lem}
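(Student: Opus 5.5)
We use Lemma \ref{coprime} to reduce $m_q$ to $m$ (or to $m_2$ when $2\mid q$), and then insert the bounds of Lemma \ref{m3}. The two terms of Lemma \ref{m4} come from summing the two terms of Lemma \ref{m3} over the $q^\infty$-smooth numbers.

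\emph{Setting up the identity.} Take $h(\ell)=\1_{\ell\le X}$ in Lemma \ref{coprime}; this gives $m_q(X)=\sum_{d\mid q^\infty}\frac{1}{d}\,m(X/d)$. The sum over $d$ is finite, since $m(X/d)=0$ when $d>X$. When $2\mid q$, write $q=2^a q'$ with $q'$ odd. The condition $(\ell,q)=1$ then splits as $(\ell,2)=1$ together with $(\ell,q')=1$. Applying Lemma \ref{coprime} only to the odd part $q'$, with the function $\ell\mapsto\1_{(\ell,2)=1}\1_{\ell\le X}$, gives $m_q(X)=\sum_{d\mid q'^\infty}\frac{1}{d}\,m_2(X/d)$.

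\emph{Case $(2,q)=1$.} Apply \eqref{e1} with $y=X$ and $t=X/d\le X$. The first term contributes
\[
\sqrt{2/X}\sum_{d\mid q^\infty}d^{-1/2}=\sqrt{2/X}\prod_{p\mid q}\bigl(1-p^{-1/2}\bigr)^{-1}=\sqrt{2/X}\,\frac{\sqrt q}{\varphi_{1/2}(q)}.
\]
The second term is
\[
0.010032\,\frac{X^{1-\xi}}{\log X}\,\1_{X\ge10^{12}}\sum_{d\mid q^\infty}\frac{1}{d}\Bigl(\frac{d}{X}\Bigr)^{1-\xi}=0.010032\,\frac{\1_{X\ge10^{12}}}{\log X}\,\frac{q^\xi}{\varphi_\xi(q)}.
\]
Here $g_0(q)=g_2(q)=1$, so this is exactly the claimed bound. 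We also need $X\ge1$ for \eqref{e1}. For $X<1$ we have $m_q(X)=0$, so that case is trivial.

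\emph{Case $2\mid q$.} Use \eqref{e2} and sum only over $d\mid q'^\infty$. The first term gives
\[
\sqrt{3/X}\,\frac{\sqrt{q'}}{\varphi_{1/2}(q')}.
\]
We rewrite this as
\[
\sqrt{2/X}\,\frac{\sqrt q}{\varphi_{1/2}(q)}\cdot\sqrt{\tfrac32}\,\bigl(1-2^{-1/2}\bigr)=\sqrt{2/X}\,\frac{\sqrt q}{\varphi_{1/2}(q)}\cdot\frac{\sqrt3(\sqrt2-1)}{2},
\]
which is precisely $g_0(q)$. The second term gives
\[
0.0296\,\frac{q'^\xi}{\varphi_\xi(q')}=0.0296\,\bigl(1-2^{-\xi}\bigr)\frac{q^\xi}{\varphi_\xi(q)}.
\]
Since $0.0296/0.010032\approx 2.9506$, this equals $0.010032\,g_2(q)\,q^\xi/\varphi_\xi(q)$, as claimed.

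\emph{Main obstacle: validity ranges of \eqref{e1} and \eqref{e2}.} Lemma \ref{m3} requires $y\ge1$ and $t\in(0,y]$, so $y=X$ is legitimate. However, the indicator $\1_{y\ge10^{12}}$ must be justified from Lemmas \ref{m1} and \ref{m2}, because the $m_2$ bound of Lemma \ref{m1} holds only up to $10^{12}$. For $t\le X<10^{12}$, the bound $|m_2(t)|\le\sqrt{3/t}$ is available directly, so there is no issue. For $X\ge10^{12}$ we rely on Lemma \ref{m3} as stated: the monotonicity of $t\mapsto t^{1-\xi}/\log t$ lets us bound $|m_2(t)|\le0.0296/\log t$ by $0.0296\,(X/t)^{1-\xi}/\log X$ whenever $10^{12}\le t\le X$. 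The point needing care is the numerical constant $2.9506$. It must satisfy $2.9506\cdot 0.010032\ge0.0296$, which holds: the product is $0.029600\ldots$. I expect this rounding check, together with confirming the exact form of $g_0$, to be the only delicate step.
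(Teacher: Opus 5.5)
Your proof is correct and follows the paper's own route. You reduce via Lemma~\ref{coprime} to $m$, or to $m_2$ summed over $d\mid q'^\infty$ when $2\mid q$. You then insert \eqref{e1}--\eqref{e2} with $y=X$ and sum over $d$ to get $\sqrt q/\varphi_{\frac12}(q)$ and $q^\xi/\varphi_\xi(q)$. The identity $g_0(2)=\sqrt{3/2}\,(1-2^{-1/2})$ and the check $2.9506\times 0.010032\ge 0.0296$ are exactly the paper's remark following the lemma. One small wording point: in the case $2\mid q$ the second term is bounded by $0.010032\,g_2(q)q^\xi/\varphi_\xi(q)$, not equal to it, which you do acknowledge at the end.
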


Note that we have diminished the threshold from $10^{14}$ to $10^{12}$ in the first estimate so that both estimates have similar shapes. Moreover, when $q=2q'$, 
\begin{equation*}
\frac{\sqrt{2}g_0(q)\sqrt{q}}{\varphi_{\frac{1}{2}}(q)}=\frac{2g_0(2)}{(\sqrt{2}-1)}\frac{\sqrt{q'}}{\varphi_{\frac{1}{2}}(q')}=\frac{\sqrt{3}\sqrt{q'}}{\varphi_{\frac{1}{2}}(q')},
\end{equation*}
and 
\begin{equation*}
\sage{Trunc(H1,6)}\cdot\frac{g_2(q)q^\xi}{\varphi_{\xi}(q)}=\sage{Trunc(H1,6)}\cdot\frac{g_2(2)2^\xi}{\varphi_{\xi}(2)}\frac{q'^\xi}{\varphi_{\xi}(q')}\geq\sage{Trunc(H2,4)}\cdot\frac{q'^\xi}{\varphi_{\xi}(q')}.
\end{equation*}
The above analysis explains the definitions of $g_0$ and $g_2$ in \eqref{defg0g2}, where the value $\sage{Upper(H2/H1,digits)}$ is an upper bound for $\sage{Trunc(H2,4)}/\sage{Trunc(H1,6)}$.


\section{On the summatory function of the M\"obius function at
  $s=1+\ve$}\label{mq_ve}

By simply replacing \cite[Lemma 5.2]{Ramare-Zuniga*23-1} by Lemma \ref{m4}, thus simply changing the function $g_1$ defined therein by the function $g_2$ defined in \eqref{defg0g2}, we obtain the following direct modification of  \cite[Theorem~1.7]{Ramare-Zuniga*23-1}.

\begin{lem}
  \label{m1e}
  Let $\sigma\in[1,2]$ and $X\geq 1$. We have the following estimation
  \begin{equation*}
    m_q(X;1+\ve)
    =\frac{m_q(X)}{X^{\ve}}
    +\frac{q^{1+\ve}}{\varphi_{1+\ve}(q)}\frac{1}{\zeta(1+\ve)}
    +\frac{\ve\Delta_q(X,\ve)}{X^{\ve}}
  \end{equation*}
  where
  \begin{equation*}
    |\Delta_q(X,\ve)|\le
    0.03\cdot g_2(q)\frac{q^\xi}{\varphi_\xi(q)}\frac{\1_{X\ge 10^{12}}}{\log X}
    +
    \biggl(
    4.1\cdot g_0(q)
    +
    \frac{5 + \ve 2^\ve}{2}\biggr)\frac{\sqrt{q}}{\varphi_{\frac12}(q)}
    \frac{2^\ve}{\sqrt{X}}
  \end{equation*}
  where the multiplicative functions $g_0$ and $g_2$ are defined
  in~\eqref{defg0g2}.
  Moreover,
  \begin{equation*} \frac{m_q(X)}{X^{\ve}}\le m_q(X;1+\ve)
  \end{equation*} and
  \begin{align*}
    \frac{\Delta_q(X,\ve)}{X^\ve}&\leq\left\{
      \begin{array}{ll}
        0 &\text{ if } X < 10.9,\ \ve \le 4/25,\\
         0 &\text{ if } X < 47,\ \bigl(q,\prod_{p\le
                                      43}p\bigr)\notin\{1,11,13, 17\},\\
       0.014 &\text{ if } X \le 47,\ \bigl(q,\prod_{p\le
                                          43}p\bigr)=1,\\
   0.00005 &\text{ if } X < 47,\ \bigl(q,\prod_{p\le
                                             43}p\bigr)\in\{11,13,
                                            17\},
                        \end{array}
                      \right.\\
    \frac{\Delta_q(X,\ve)}{X^\ve}&\geq\left\{
      \begin{array}{ll}
        -\frac{q}{\varphi(q)}, &\\
         -\frac{1}{\ve\zeta(1+\ve)}\frac{q^{1+\ve}}{\varphi_{1+\ve}(q)} .&
                        \end{array}
                      \right.
  \end{align*}
\end{lem}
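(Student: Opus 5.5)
The plan is to follow the proof of \cite[Theorem~1.7]{Ramare-Zuniga*23-1}, changing only the input bound on $m_q(t)$. The starting point is partial summation. For $X\ge1$ and $\ve>0$, write
\begin{equation*}
m_q(X;1+\ve)=\frac{m_q(X)}{X^\ve}+\ve\int_1^X \frac{m_q(t)}{t^{1+\ve}}\,dt .
\end{equation*}
Then extend the integral to infinity:
\begin{equation*}
\ve\int_1^\infty \frac{m_q(t)}{t^{1+\ve}}\,dt=\sum_{(\ell,q)=1}\frac{\mu(\ell)}{\ell^{1+\ve}}=\frac{q^{1+\ve}}{\varphi_{1+\ve}(q)}\frac{1}{\zeta(1+\ve)} .
\end{equation*}
This identity holds by absolute convergence for $\ve>0$, together with the Euler product of $1/\zeta$ with the primes dividing $q$ removed. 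Putting the two together defines
\begin{equation*}
\Delta_q(X,\ve)=-X^\ve\int_X^\infty\frac{m_q(t)}{t^{1+\ve}}\,dt .
\end{equation*}
The whole problem is then to bound this tail integral. The case $\ve=0$ is trivial and is obtained by continuity.

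For the upper bound on $|\Delta_q|$, I would not insert Lemma~\ref{m4} directly into the tail, because $t^{-1/2}$ and $1/\log t$ integrated against $t^{-1-\ve}$ lose a factor $1/\ve$. Instead, as in the cited work, I would integrate by parts once more, or equivalently use the smoothed identity $\sum_{\ell>X,(\ell,q)=1}\mu(\ell)\ell^{-1}\log(\ell/X)$-type expressions. This gives an expression with the right size $\ve$ times something. The key input is Lemma~\ref{m4} applied to $m_q$ at a few points with $X\le t\le 2X$, which produces the factors $2^\ve$ and $(5+\ve2^\ve)/2$. The term $4.1\,g_0(q)\sqrt q/\varphi_{1/2}(q)$ should come from the $\sqrt{2/t}$ part, and the $0.03\,g_2(q)q^\xi/\varphi_\xi(q)$ term from the $\log$ part. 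Since the paper says only $g_1$ is replaced by $g_2$, I would literally rerun that proof with Lemma~\ref{m4} in place of \cite[Lemma 5.2]{Ramare-Zuniga*23-1}. I would check that every place where the old $g_1$ entered does so only through a multiplicative factor evaluated at $q$. If so, the constants $0.03$ and $4.1$ carry over unchanged.

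The remaining inequalities come separately.

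\emph{The inequality $m_q(X)/X^\ve\le m_q(X;1+\ve)$.} This is equivalent to $\int_1^X m_q(t)t^{-1-\ve}\,dt\ge0$. That in turn follows from the nonnegativity of $\sum_{\ell\le X,(\ell,q)=1}\mu(\ell)\ell^{-1}\log(X/\ell)$, after writing $t^{-\ve}$ as a superposition.

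\emph{The lower bounds.} The bound $-\frac{1}{\ve\zeta(1+\ve)}\frac{q^{1+\ve}}{\varphi_{1+\ve}(q)}$ is immediate from $m_q(X;1+\ve)\ge m_q(X)/X^\ve$. The bound $-q/\varphi(q)$ uses $|m_q(t)|\le1$ from Lemma~\ref{BOUND} together with a comparison of $\ve\zeta(1+\ve)$.

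\emph{The small-$X$ cases.} The small-$X$ table ($X<10.9$, $X<47$) is a finite computation. There $m_q(t)$ takes finitely many patterns depending on $(q,\prod_{p\le43}p)$, and the sign of the tail is checked case by case.

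The main obstacle is making sure the constants $0.03$ and $4.1$ survive the replacement of $g_1$ by $g_2$ without recomputation, i.e.\ that they depend only on the shape of Lemma~\ref{m4}. If they do not, the integral bounds must be redone with $g_2(2)$ explicitly.
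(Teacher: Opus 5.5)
Your top-level plan matches the paper's. The paper's proof is only the remark that the argument of \cite[Theorem~1.7]{Ramare-Zuniga*23-1} goes through with its Lemma~5.2 replaced by Lemma~\ref{m4}, i.e.\ with $g_1$ replaced by $g_2$. Everything else (the constants $0.03$ and $4.1$, the small-$X$ table, the lower bounds) is inherited unchanged, so your ``main obstacle'' is exactly what the paper asserts without recomputation. The following pieces of your proposal are correct:
\begin{itemize}
\item the partial summation;
\item the formula $\Delta_q(X,\ve)=-X^{\ve}\int_X^\infty m_q(t)t^{-1-\ve}\,dt$;
\item the reduction of $m_q(X)/X^\ve\le m_q(X;1+\ve)$ to the nonnegativity of $\check m_q(X)=\sum_{\ell\le X,(\ell,q)=1}\mu(\ell)\ell^{-1}\log(X/\ell)$ (a fact you must import; it is in fact equivalent to that inequality, by letting $\ve\to0^+$);
\item the deduction of the lower bound $-\frac{1}{\ve\zeta(1+\ve)}\frac{q^{1+\ve}}{\varphi_{1+\ve}(q)}$ from that inequality.
\end{itemize}

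Where you sketch your own arguments, however, two steps fail as written.

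\emph{The small-$X$ table.} It is not obtained by checking `the sign of the tail' over finitely many patterns. The tail $\int_X^\infty m_q(t)t^{-1-\ve}\,dt$ involves $m_q(t)$ for all $t\ge X$, hence every prime factor of $q$, so it is not determined by $(q,\prod_{p\le 43}p)$. The finite reduction comes instead from the head representation
\begin{equation*}
\frac{\Delta_q(X,\ve)}{X^\ve}=\int_1^X\frac{m_q(t)}{t^{1+\ve}}\,dt-\frac{1}{\ve\zeta(1+\ve)}\prod_{p|q}\Bigl(1-\frac{1}{p^{1+\ve}}\Bigr)^{-1}.
\end{equation*}
For $X<47$ the integral only sees $q'=(q,\prod_{p\le43}p)$, while the product can only increase when the larger prime factors of $q$ are added, so $\Delta_q(X,\ve)\le\Delta_{q'}(X,\ve)$. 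Even then the check is not finite, since $\ve$ varies continuously and the sign really depends on it. For instance, for $q=1$, $\ve=1$ and $10.86<X<10.9$ one has $m(X;2)-m(X)/X>6/\pi^2$, i.e.\ $\Delta_1(X,1)>0$; this is why the first line needs $\ve\le 4/25$. A rigorous treatment in $\ve$, with sharp control of $\ve\zeta(1+\ve)$, is required.

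\emph{The lower bound $-q/\varphi(q)$.} It does not come from $|m_q|\le 1$, which inserted in the tail only yields $-X^{-\ve}/\ve$. It follows from the other lower bound, because $\ve\zeta(1+\ve)>1$ (Lemma~\ref{zeta}) and $\prod_{p|q}(1-p^{-1-\ve})^{-1}\le q/\varphi(q)$.

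\emph{A minor inaccuracy.} Inserting the $\sqrt{2/t}$ part of Lemma~\ref{m4} directly into the tail loses nothing: it gives at most $2\sqrt2\,g_0(q)\sqrt q/(\varphi_{\frac12}(q)\sqrt X)$. Only the $1/\log t$ part costs a factor $1/\ve$, and that is what forces the cited argument to pass through an identity rather than through the tail.
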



\section{Mean values of some multiplicative functions}\label{meanvalues}

\noindent\phantom{x}\textbf{General statements.} The following result is a direct application of \cite[Thm. 3.3]{Alterman*22}, which is a generalisation of \cite[Lemma
  3.2]{Ramare*95}, with $f'(\ell)=f(\ell)/\ell^{1-\alpha}$ via summation by parts. Based upon a convolution identity, this version essentially extends analytically a function to the left of the real line $\{\Re(s)=0\}$ to obtain an explicit mean-value.
 \begin{thm}\label{general}    
 Let  $q\in\mathbb{Z}^{+}$ and let $X$, $\alpha$, $\beta$ be real numbers such that $X>0$, $\alpha\leq 1/2$ and $\beta>\alpha$. Consider a multiplicative function $f:\mathbb{Z}^+\to\mathbb{C}$ such that 
 $f(p)=\frac{1}{p^{\alpha}}+O(\frac{1}{p^{\beta}})$, for every sufficiently large prime number $p$ coprime to $q$. Then for any real number $\delta>0$ such that $0<\delta<\min\{\beta-\alpha,1/2\}$ we have the estimate
 \begin{equation*}
 \sum_{\substack{\ell\leq X\\ (\ell,q)=1}}\mu^2(\ell)f({\ell})=\frac{H_{f'}^q(0)\varphi(q)}{(1-\alpha)q}X^{1-\alpha}+O^*\bigg(\frac{(2-2\alpha-\delta)\Delta_{1}^{\delta}\overline{H}_{f'}^{\phantom{.}q}(-\delta)}{1-\alpha-\delta}\frac{\sigma_{1-\delta}(q)}{q^{1-\delta}}\ X^{1-\alpha-\delta}\bigg),
 \end{equation*} 
 where
 \begin{equation*}
 \Delta_{1}^{\delta}=\max\bigg\{\gamma,\frac{1}{\delta e^{\gamma\delta+1}}\bigg\}
 \end{equation*} 
 and $H_{f'}^{q}, \overline{H}_{f'}^{\phantom{.}q}:\{s\in\mathbb{C},\ \Re(s)>\max\{-(\beta-\alpha),-1/2\}\}\to\mathbb{C}$ are two analytic functions defined as  
 \begin{align*}
 H_{f'}^{q}(s)&=\prod_{p\nmid q}\bigg(1-\frac{1-f(p)p^{\alpha}}{p^{s+1}}-\frac{f(p)p^{\alpha}}{p^{2s+2}}  \bigg),\\
 \overline{H}_{f'}^{\phantom{.}q}(s)&=\prod_{p\nmid q}\bigg(1+\frac{|1-f(p)p^{\alpha}|}{p^{s+1}}+\frac{|f(p)p^{\alpha}|}{p^{2s+2}}  \bigg).
 \end{align*} 
 \end{thm}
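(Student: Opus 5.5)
The plan is to derive Theorem \ref{general} from \cite[Thm.~3.3]{Alterman*22} applied to $f'(\ell)=f(\ell)\ell^{\alpha}$, followed by a summation by parts. The function $f'$ satisfies $f'(p)=1+O(p^{\alpha-\beta})$ for large primes $p\nmid q$. It is therefore exactly in the ``$f'(p)$ close to $1$'' setting of \cite[Lemma 3.2]{Ramare*95}. The first step is the convolution identity: write $\mu^2(\ell)f'(\ell)\1_{(\ell,q)=1}=(\mu^2\1_{(\cdot,q)=1}\star h)(\ell)$. On primes $p\nmid q$, the multiplicative function $h$ has local factor
\begin{equation*}
1-\frac{1-f'(p)}{p^{s+1}}-\frac{f'(p)}{p^{2s+2}}.
\end{equation*}
One checks this by dividing the local factor $1+f'(p)p^{-s-1}$ of the Dirichlet series of $\mu^2 f'$ by $1+p^{-s-1}$. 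Hence $\sum h(d)d^{-1-s}=H^q_{f'}(s)$, and this series converges absolutely for $\Re s>\max\{-(\beta-\alpha),-1/2\}$. Its absolute version is dominated by $\overline{H}^{\,q}_{f'}(s)$.

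The second step uses \cite[Thm.~3.3]{Alterman*22}, which gives
\begin{equation*}
\sum_{\substack{\ell\le t\\(\ell,q)=1}}\mu^2(\ell)f'(\ell)=H^q_{f'}(0)\frac{\varphi(q)}{q}\frac{t}{\zeta(2)}\prod_{p|q}(1-p^{-2})^{-1}+O^*\Bigl(\Delta_1^\delta\,\overline{H}^{\,q}_{f'}(-\delta)\frac{\sigma_{1-\delta}(q)}{q^{1-\delta}}t^{1-\delta}\Bigr).
\end{equation*}
This estimate comes from the explicit squarefree-count remainder $\sum_{n\le t,(n,q)=1}\mu^2(n)=\frac{6}{\pi^2}\prod_{p|q}\frac{p}{p+1}t+O^*(\ldots t^{1/2})$, interpolated against the trivial bound and summed over $d$ with weight $|h(d)|d^{-1+\delta}$. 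The normalization of the main term must be matched with the stated constant $\varphi(q)/q$. To do so, I would absorb the factor $\prod_{p}(1-p^{-2})$ and the $p|q$ factors into the definition of $H^q_{f'}$, as the cited theorem does. This bookkeeping should be checked carefully.

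The third step is partial summation against $\ell^{-\alpha}$. Write $F(t)$ for the sum above, so that $\sum_{\ell\le X}\mu^2(\ell)f(\ell)=F(X)X^{-\alpha}+\alpha\int_1^X F(t)t^{-\alpha-1}\,dt$. The main term then gives $cX^{1-\alpha}\bigl(1+\alpha/(1-\alpha)\bigr)=cX^{1-\alpha}/(1-\alpha)$, up to a constant coming from the lower limit. The error term gives $EX^{1-\alpha-\delta}\bigl(1+|\alpha|/(1-\alpha-\delta)\bigr)$. Since $\alpha\le 1/2$, we have $1-\alpha-\delta>0$. When $\alpha\ge0$ this factor is $\frac{1-\delta}{1-\alpha-\delta}\le\frac{2-2\alpha-\delta}{1-\alpha-\delta}$. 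When $\alpha<0$ the term $\alpha\int$ has a sign to exploit, and I would use $\frac{1-\alpha-\delta+|\alpha|}{1-\alpha-\delta}$, which is again bounded by the stated quantity.

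I expect the main obstacle to be the boundary contribution at small $t$ (the lower limit of integration) together with the case $\alpha<0$. There the crude factor $1+|\alpha|/(1-\alpha-\delta)$ must be shown to stay within $(2-2\alpha-\delta)/(1-\alpha-\delta)$. My first attempt would be to run the integral from $0$, using $F(t)=0$ for $t<1$. In that case the main-term integral contributes exactly $cX^{1-\alpha}\alpha/(1-\alpha)$ with no boundary term, and the bound $|F(t)-ct|\le Et^{1-\delta}$ also holds for $t<1$, because there $|ct|\le Et^{1-\delta}$ follows trivially from $|c|\le E$.
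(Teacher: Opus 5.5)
There is a genuine gap in your second step, and it comes from your normalisation $f'(\ell)=f(\ell)\ell^{\alpha}$.

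First, the local identity is miscomputed. We have $1-\frac{1-f'(p)}{p^{s+1}}-\frac{f'(p)}{p^{2s+2}}=(1+f'(p)p^{-s-1})(1-p^{-s-1})$, not $(1+f'(p)p^{-s-1})/(1+p^{-s-1})$. So $H^q_{f'}$ is what remains after removing a $\zeta$-factor, and the companion of $h$ must be $\1_{(\cdot,q)=1}$, not $\mu^2\1_{(\cdot,q)=1}$. With your $\mu^2$-convolution, $h$ has local factors $(1+f'(p)x)/(1+x)$, and $\sum_d|h(d)|d^{\delta-1}$ is not $\overline{H}^{\,q}_{f'}(-\delta)$. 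Your displayed main term is also off by $\prod_{p\nmid q}(1-p^{-2})$; the correct one is $\frac{\varphi(q)}{q}H^q_{f'}(0)\,t$. Nothing can be absorbed into $H^q_{f'}$, since the statement fixes it.

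Second, and more seriously, $\Delta_1^\delta$ cannot come out of the mechanism you describe. One has $\max\{\gamma,1/(\delta e^{\gamma\delta+1})\}=\sup_{Y>0}Y^{\delta}\bigl|\sum_{m\le Y}\frac1m-\log Y-\gamma\bigr|$. The second entry is the maximum of $Y^\delta(-\log Y-\gamma)$ on $(0,1)$, reached at $Y=e^{-\gamma-1/\delta}$. So $\Delta_1^\delta$ is the remainder constant of the harmonic sum. In your normalisation, the correct convolution leads instead to $\sum_{m\le Y,(m,q)=1}1$, whose remainder has best constant $\sup_{Y>0}\{Y\}/Y^{1-\delta}=1$. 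Your partial summation then gives $\frac{1-\alpha-\delta+|\alpha|}{1-\alpha-\delta}$ times $1$, not times $\Delta_1^\delta$. This is incomparable with the theorem. At $\alpha=\frac12$, $\delta=\frac13$ (the case used in Lemma~\ref{auxnu2}) you get $4$, whereas the statement claims $4\Delta_1^{1/3}=12e^{-1-\gamma/3}\approx 3.64$. So your key estimate is asserted rather than derived, and even the corrected version of your route does not prove the theorem for all admissible $(\alpha,\delta)$.

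The paper's route normalises the other way: $f'(\ell)=f(\ell)/\ell^{1-\alpha}$, so that $f'(p)\approx 1/p$. Write $\mu^2 f'\1_{(\cdot,q)=1}=h\star\bigl(\1_{(\cdot,q)=1}\cdot\frac1m\bigr)$. Then $\sum_d h(d)d^{-s}=H^q_{f'}(s)$ exactly, since the local factor is $(1+f(p)p^{\alpha}p^{-s-1})(1-p^{-s-1})$, and $\sum_d|h(d)|d^{\delta}=\overline{H}^{\,q}_{f'}(-\delta)$. Expanding the coprimality in the harmonic sum through $\sum_{e\mid q}\mu(e)/e$ gives, for every $t>0$ and some constant $B$,
\[
G(t):=\sum_{\substack{\ell\le t\\(\ell,q)=1}}\mu^2(\ell)f'(\ell)=\frac{\varphi(q)}{q}H^q_{f'}(0)\log t+B+E(t),\qquad |E(t)|\le \Delta_1^\delta\,\frac{\sigma_{1-\delta}(q)}{q^{1-\delta}}\,\overline{H}^{\,q}_{f'}(-\delta)\,t^{-\delta}.
\]
This is what \cite[Thm.~3.3]{Alterman*22} provides. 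Then
\[
\sum_{\substack{\ell\le X\\(\ell,q)=1}}\mu^2(\ell)f(\ell)=X^{1-\alpha}G(X)-(1-\alpha)\int_0^X t^{-\alpha}G(t)\,dt .
\]
The $\log t$ and constant parts combine to exactly $\frac{H^q_{f'}(0)\varphi(q)}{(1-\alpha)q}X^{1-\alpha}$. The remainder contributes at most $1+\frac{1-\alpha}{1-\alpha-\delta}=\frac{2-2\alpha-\delta}{1-\alpha-\delta}$ times the constant times $X^{1-\alpha-\delta}$, which is precisely the stated factor. Your partial summation loses less, with $|\alpha|$ in place of $1-\alpha$, but only the harmonic-sum normalisation delivers $\Delta_1^\delta$. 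Since the remainder bound holds for all $t>0$, the small-$t$ boundary issue you worried about does not arise.
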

  
  The above method produces flexibility in choosing the parameter $\delta\in(0,\min\{\beta-\alpha,1/2\})$. One may be tempted to choose $\delta$ very close to $\min\{\beta-\alpha,1/2\}^{-}$ but the evidence is that the constants accompanying $X^{1-\alpha-\delta}$ turn out to be too big to be practical. Nonetheless, in \cite[Thm. 4.6]{Alterman*22}, it is proven that in the case $\beta-\alpha>1/2$ one can go to the edge of the above convolution method, thus allowing the choice $\delta=\min\{\beta-\alpha,1/2\}=1/2$ which further produces reasonable constants. 
  
  For $v\in\{1,2\}$, define
 \begin{align*}
\mathrm{E}_\alpha^{(v)}=\max\left\{\mathrm{D}_v\left(1+\frac{|\alpha-1|}{\alpha-\frac{1}{2}}\right),\frac{\varphi_{\frac{1}{2}}(v)}{\sqrt{v}}\left|\frac{v^\alpha}{\kappa_\alpha(v)}\frac{\zeta(\alpha)}{\zeta(2\alpha)}-\frac{v}{\kappa(v)}\frac{6}{(\alpha-1)\pi^2}\right|,\phantom{xxx}\right.&\\
\left.\frac{\varphi_{\frac{1}{2}}(v)}{\sqrt{v}}\frac{|\alpha-1|}{\alpha-\frac{1}{2}}\left(\frac{3\kappa_\alpha(v)\zeta(2\alpha)}{\left(\alpha-\frac{1}{2}\right)v^{\alpha-1}\kappa(v)\pi^2|\zeta(\alpha)(\alpha-1)|}\right)^{\frac{2}{\alpha-1}}\right\},\phantom{x}\text{ if }\alpha>1/2,\ \alpha\neq 1,&\\
\mathrm{E}_1^{(1)}=1.044,\qquad\qquad\qquad \mathrm{E}_1^{(2)}=0.232,\qquad\qquad\qquad\qquad&
\end{align*}
and $\mathrm{D}_1=0.43$, $\mathrm{D}_2=0.12$.
We reproduce \cite[Thm. 4.6]{Alterman*22} below.  
  \begin{thm}\label{general++}
 Let $q\in\mathbb{Z}^+$ and $X>0$. Consider a multiplicative function $f:\mathbb{Z}^+\to\mathbb{C}$ such that for every prime number $p$ satisfying $(p,q)=1$, $f(p)=\frac{1}{p^{\alpha}}+O(\frac{1}{p^{\beta}})$, where $\alpha$, $\beta$ are real numbers satisfying $\beta>\alpha$, $\beta-\alpha>\frac{1}{2}$. We have the following

 \noindent $\mathbf{[A]}$ If $\alpha>\frac{1}{2}$ then 
 \begin{align*}
 \sum_{\substack{\ell\leq X\\ (\ell,q)=1}}\mu^2(\ell)f({\ell})=F_\alpha^{q}(X)+O^*\bigg(\mathrm{t}_\alpha(q)\cdot\frac{\mathrm{w}_\alpha^q\ \mathrm{P}_\alpha}{X^{\alpha-\frac{1}{2}}}\bigg),
 \end{align*} 
 where
  \begin{align*} 
 F_\alpha^q(X)&=\frac{H_{f}^{q}(0)\zeta(\alpha)\varphi_\alpha(q)}{q^\alpha}-\frac{H_{f}^{q}(1-\alpha)\varphi(q)}{(\alpha-1)q}\frac{1}{X^{\alpha-1}},&&\text{ if }\alpha\neq 1,\\
 F_1^q(X)&=\frac{H_{f}^{q}(0)\varphi(q)}{q}\bigg(\log\left(X\right)+T_f^q+\gamma+\sum_{p|q}\frac{\log p}{p-1}\bigg),&&\text{ if }H_{f}^{q}(0)\neq 0,\\
  F_1^q(X)&=H_{f}^{q}\phantom{.}'(0),&&\text{ if }H_{f}^{q}(0)= 0,\\
T_{f}^{q}&=\sum_{p\nmid q}\frac{(1-(p-2)f(p))\log p}{(f(p)+1)(p-1)},
\end{align*}
and
\begin{align*}
\mathrm{w}_\alpha^q&=\prod_{2|q}\mathrm{E}_\alpha^{(2)}\prod_{2\nmid q}\bigg(\frac{\sqrt{2}-1}{\sqrt{2}-1+|2^\alpha f(2)-1|}\bigg)\bigg(\mathrm{E}_\alpha^{(1)}+\frac{|2^\alpha f(2)-1|\ \mathrm{E}_\alpha^{(2)}}{\varphi_{\frac{1}{2}}(2)}\bigg),\\
\mathrm{t}_\alpha(q)&=\prod_{p|q}\bigg(1+\frac{1-|f(p)p^\alpha-1|}{\sqrt{p}-1+|f(p)p^{\alpha}-1|}\bigg),
\qquad\mathrm{P}_\alpha=\prod_{p}\bigg(1+\frac{|f(p)p^\alpha-1|}{\sqrt{p}-1}\bigg),
 \end{align*}    
 \noindent $\mathbf{[B]}$ If $\alpha<\frac{1}{2}$ then $ \sum_{\substack{\ell\leq X\\ (\ell,q)=1}}\mu^2(\ell)f({\ell})$ can be expressed as 
 \begin{equation*}
\frac{H_{f'}^q(0)\varphi(q)}{(1-\alpha)q}X^{1-\alpha}+O^*\bigg(\mathrm{t}_{\alpha}(q)\cdot\bigg(1+\frac{2-2\alpha}{1-2\alpha}\bigg)\mathrm{w'}_{\alpha}^q\mathrm{P}_{\alpha} \ X^{\frac{1}{2}-\alpha}\bigg),
 \end{equation*} 
where $\mathrm{t}_{\alpha}(q)$ and $\mathrm{P}_{\alpha}$ are as in $\mathbf{[A]}$ and for $\alpha\leq\frac{1}{2}$,
 \begin{align*}
H_{f'}^q(0)&=\prod_{p\nmid q}\bigg(1-\frac{p^{1-\alpha}-f(p)p+f(p)}{p^{2-\alpha}} \bigg),\\
\mathrm{w'}_{\alpha}^q&=\prod_{2|q}\mathrm{E}_1^{(2)}
\prod_{2\nmid q}\bigg(\frac{\sqrt{2}-1}{\sqrt{2}-1+|2^\alpha f(2)-1|}\bigg)\bigg(\mathrm{E}_1^{(1)}+\frac{|2^\alpha f(2)-1|\ \mathrm{E}_1^{(2)}}{\varphi_{\frac{1}{2}}(2)}\bigg),
\end{align*} 

 \noindent $\mathbf{[C]}$ If $\alpha=\frac{1}{2}$ then $\sum_{\substack{\ell\leq X\\ (\ell,q)=1}}\mu^2(\ell)f({\ell})$ can be written as 
 \begin{equation*}
\frac{H_{f'}^q(0)\varphi(q)}{(1-\alpha)q}X^{1-\alpha}+O^*\bigg(\mathrm{C}+\mathrm{t}_{\alpha}(q)\cdot\mathrm{w'}_{\alpha}^q\mathrm{P}_{\alpha} \ \bigg(1+\frac{1}{2}\log(X)\bigg)\bigg),
 \end{equation*} 
where $\mathrm{t}_{\alpha}(q)$ and $\mathrm{P}_{\alpha}$ are as in $\mathbf{[A]}$, $H_{f'}^q(0)$ and $\mathrm{w'}_{\alpha}^q$ are as in $\mathbf{[B]}$ and
\begin{align*}
\mathrm{C}&=\left|\frac{H_{f'}^{q}(0)\varphi(q)}{q}\bigg(\sum_{p\nmid q}\frac{\log(p)(\sqrt{p}-(p-2)f(p))}{(f(p)+\sqrt{p})(p-1)}+\gamma+\sum_{p|q}\frac{\log(p)}{p-1}-2\bigg)\right|.
\end{align*}
 \end{thm}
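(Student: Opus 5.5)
The plan is the convolution method of \cite[Lemma 3.2]{Ramare*95}, run all the way to the edge $\delta=1/2$. This is allowed because $\beta-\alpha>1/2$.

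\textbf{Step 1: the factorisation.} Restrict the sum to $(\ell,q)=1$ and compare $\mu^2 f$ with the model function $\ell\mapsto\mu^2(\ell)\ell^{-\alpha}$. Write
\begin{equation*}
\mu^2(\ell)f(\ell)\1_{(\ell,q)=1}=\sum_{dm=\ell}h(d)\,\mu^2(m)m^{-\alpha}\1_{(m,q)=1},
\end{equation*}
where $h$ is multiplicative and supported on $(d,q)=1$. On primes, $h(p)=f(p)-p^{-\alpha}=O(p^{-\beta})$. On higher prime powers, $h(p^k)$ is determined by inverting the local factor $1+p^{-\alpha-s}$. One gets $h(p^k)=(-1)^{k-1}(f(p)p^{\alpha}-1)p^{-k\alpha}$, a geometric sequence. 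Hence
\begin{equation*}
\sum_{k\ge1}|h(p^k)|p^{k(\alpha-1/2)}\le\frac{|f(p)p^\alpha-1|}{\sqrt p-1},
\end{equation*}
and therefore $\sum_d|h(d)|d^{\alpha-1/2}\le\mathrm{P}_\alpha$. The Euler factors at $p\mid q$ are removed through the factor $\mathrm{t}_\alpha(q)$, because $|h(p^k)|$ is replaced by $0$ there. Since $\beta-\alpha>1/2$, the Dirichlet series $\sum_d h(d)d^{-s}$ converges absolutely on $\Re s\ge\alpha-1/2$. Up to the factor $\varphi(q)/q$ it coincides with $H_f^q$ (resp.\ $H_{f'}^q$) after the shift $s\mapsto s+\alpha$.

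\textbf{Step 2: the model sum.} This is the main input and where the constants $\mathrm{E}^{(v)}_\alpha$, $\mathrm{D}_v$ arise. We need an explicit estimate
\begin{equation*}
\sum_{\substack{m\le Y\\(m,q)=1}}\frac{\mu^2(m)}{m^{\alpha}}=M_\alpha^q(Y)+O^*\Bigl(\mathrm{E}^{(v)}_\alpha\,\frac{\sqrt q}{\varphi_{1/2}(q)}\,Y^{1/2-\alpha}\Bigr)\quad\text{for all }Y>0,
\end{equation*}
with $v=2$ if $2\mid q$ and $v=1$ otherwise, and with the natural main term $M_\alpha^q$ coming from the pole of $\zeta(s+\alpha)/\zeta(2s+2\alpha)$. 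I would obtain it from explicit square-free counts $\sum_{m\le Y,(m,v)=1}\mu^2(m)=\frac{6}{\pi^2}\frac{v}{\kappa(v)}Y+O^*(\mathrm{D}_v\sqrt Y)$ by summation by parts. This gives the term $\mathrm{D}_v(1+|\alpha-1|/(\alpha-\frac12))$. The other two entries in the maximum handle small $Y$, where one simply compares the main term with $0$. For $\alpha=1$ the constants $1.044$ and $0.232$ are taken from the literature. The coprimality to odd primes of $q$ is passed to the $h$-side via Lemma~\ref{coprime}, which produces $\sqrt q/\varphi_{1/2}(q)$. The prime $2$ is kept inside the model sum, and the weighted average between $\mathrm{E}^{(1)}$ and $\mathrm{E}^{(2)}$ that defines $\mathrm{w}^q_\alpha$ comes from splitting $d$ according to whether $2\mid d$.

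\textbf{Step 3: inserting and summing.} Plug the model estimate with $Y=X/d$ into the convolution. The error terms add up to at most
\begin{equation*}
\mathrm{w}^q_\alpha\,\mathrm{t}_\alpha(q)\,X^{1/2-\alpha}\sum_d|h(d)|d^{\alpha-1/2}\le\mathrm{t}_\alpha(q)\,\mathrm{w}^q_\alpha\,\mathrm{P}_\alpha\,X^{1/2-\alpha}.
\end{equation*}
The main terms $\sum_d h(d)M_\alpha^q(X/d)$ must then be completed to all $d$, which gives $F^q_\alpha(X)$. The completion tail for $d>X$ is where I expect the difficulty. In case [A] with $\alpha\ne1$, the constant part $H(0)\zeta(\alpha)\varphi_\alpha(q)/q^\alpha$ is summed over all $d$, and the $Y^{1-\alpha}$ part over $d\le X$. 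For $d>X$ one uses $M_\alpha^q(X/d)$ together with the fact that the model sum vanishes for $X/d<1$. I would bound this tail by $|M(X/d)|\le\mathrm{E}(X/d)^{1/2-\alpha}$ as well, which is exactly what the second and third entries of the maximum in $\mathrm{E}_\alpha^{(v)}$ are designed to guarantee. Then the tail merges into the same $\mathrm{P}_\alpha$ sum. For $\alpha=1$, differentiating the Euler product gives the $\log X+T^q_f+\gamma+\sum_{p\mid q}\log p/(p-1)$ expression. Case [B] is handled identically with the main term $X^{1-\alpha}$ and the extra factor $(2-2\alpha)/(1-2\alpha)$ coming from summation by parts. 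In case [C] the exponent $1/2-\alpha$ is $0$, which yields the $1+\frac12\log X$ factor plus the constant $\mathrm{C}$.
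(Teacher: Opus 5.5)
The paper gives no proof of this theorem; it quotes \cite[Thm.~4.6]{Alterman*22}. Your architecture matches what the stated constants point to. Convolving against $\mu^2(m)m^{-\alpha}$ rather than $m^{-\alpha}$ makes $h(p^k)$ geometric, so $\sum_d|h(d)|d^{\alpha-1/2}\le\mathrm{P}_\alpha<\infty$ because $\beta-\alpha>\frac12$. With the $\zeta$-model one would have $h(p^2)\approx-p^{-2\alpha}$ and a divergent $\sum_p 1/p$ at $\delta=\frac12$. A model estimate valid for all $Y>0$ disposes of the range $d>X$. Finally, $\mathrm{t}_\alpha(q)\mathrm{P}_\alpha=\frac{\sqrt q}{\varphi_{1/2}(q)}\prod_{p\nmid q}\bigl(1+\frac{|f(p)p^\alpha-1|}{\sqrt p-1}\bigr)$, which is exactly your bookkeeping at the primes dividing $q$.

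There are two minor slips. You need absolute convergence at $\Re s=\frac12-\alpha$, not on $\Re s\ge\alpha-\frac12$. Also $\sum_d h(d)d^{-s}=H_f^q(s)\prod_{p\nmid q}(1-p^{-2(s+\alpha)})^{-1}$; the extra product is absorbed by the $\zeta(2\alpha)$ and $6/\pi^2$ of the model, and is not a factor $\varphi(q)/q$. The real gap is that the theorem is about explicit constants, and at three places your sketch would not deliver the stated ones.

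\textbf{The factor in $\mathrm{E}^{(v)}_\alpha$.} Partial summation of square-free counts $ct+O^*(D\sqrt t)$ against $t^{-\alpha}$ gives the factor $1+\frac{\alpha}{\alpha-1/2}$. This exceeds $1+\frac{|\alpha-1|}{\alpha-1/2}$ when $\alpha>\frac12$. The latter factor is what partial summation against $t^{1-\alpha}$ produces from the logarithmic model $\sum_{m\le t,(m,v)=1}\mu^2(m)/m=\frac{6v}{\pi^2\kappa(v)}(\log t+c_v)+O(t^{-1/2})$. So the $\alpha=1$ estimate is the base case; note that $\mathrm{E}^{(1)}_1=1.044$ is $\frac{6}{\pi^2}(\gamma-2\zeta'(2)/\zeta(2))$, the size of that main term at $t=1^-$.

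\textbf{The prime $2$.} Let $a_2=|2^\alpha f(2)-1|$. Splitting $d$ by parity with a single model only gives $\mathrm{E}^{(1)}_\alpha\bigl(1+\frac{a_2}{\sqrt2-1}\bigr)$. For $2\nmid q$, the statement's $2$-adic contribution is $\mathrm{w}^q_\alpha\bigl(1+\frac{a_2}{\sqrt2-1}\bigr)=\mathrm{E}^{(1)}_\alpha+\frac{a_2\mathrm{E}^{(2)}_\alpha}{\sqrt2-1}$, which is smaller. To get it you must keep the exact Euler factor at $2$. Write $S(Y)=\sum_{m\le Y,(m,q)=1}\mu^2(m)m^{-\alpha}$ and $S^{\mathrm{odd}}$ for the same sum over odd $m$. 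The $2$-part of $h$ telescopes: $\sum_{k\ge0}h(2^k)S(Y/2^k)=S(Y)+(f(2)-2^{-\alpha})S^{\mathrm{odd}}(Y/2)$. The second piece is then bounded with the odd model and $\mathrm{E}^{(2)}_\alpha$.

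\textbf{Cases $\mathbf{[B]}$ and $\mathbf{[C]}$.} These are not handled ``identically''. The constant $\mathrm{w'}^q_\alpha$ involves $\mathrm{E}^{(v)}_1$ for every $\alpha$. That is because one applies $\mathbf{[A]}$ with $\alpha=1$ to $f'(\ell)=f(\ell)\ell^{\alpha-1}$. Then $f'(p)=1/p+O(p^{-(1+\beta-\alpha)})$ and $|f'(p)p-1|=|f(p)p^\alpha-1|$, so $\mathrm{t}$, $\mathrm{P}$, $\mathrm{w'}$ and $H^q_{f'}(0)$ transfer verbatim. One then writes the sum as $\int t^{1-\alpha}\,dG(t)$, where $G(t)=\sum_{\ell\le t,(\ell,q)=1}\mu^2(\ell)f'(\ell)=A\log t+B+R(t)$ with $A=H^q_{f'}(0)\varphi(q)/q$. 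Here $|R(t)|\le\mathcal{E}t^{-1/2}$ for all $t>0$, with $\mathcal{E}=\mathrm{t}_\alpha(q)\mathrm{w'}^q_\alpha\mathrm{P}_\alpha$. This is the summation by parts with $f'$ that the paper mentions before Theorem~\ref{general}.

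For $\alpha<\frac12$ you integrate from $0$. This is legitimate precisely because $\mathbf{[A]}$ holds for all $X>0$. The main part is exactly $\frac{A}{1-\alpha}X^{1-\alpha}$, with no constant term. The error is at most $\mathcal{E}X^{1/2-\alpha}+(1-\alpha)\mathcal{E}\int_0^X t^{-1/2-\alpha}\,dt=\bigl(1+\frac{2-2\alpha}{1-2\alpha}\bigr)\mathcal{E}X^{1/2-\alpha}$.

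For $\alpha=\frac12$ the integral must start at $1$. This leaves the constant $B-2A$, whose modulus is $\mathrm{C}$, and the error $\mathcal{E}\bigl(1+\frac12\log X\bigr)$.
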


Let us reproduce \cite[Lemma 3.4]{Ramare*13d}, which gives the following special mean-value estimate.

\begin{lem}
  \label{harmonicmu2}
  For $X\ge1$, we have 
  \begin{equation*}
  \sage{Trunc(Harm1_a,digits)}\le \sum_{\ell\le X}\frac{\mu^2(\ell)}{\ell}-\frac{6}{\pi^2}\log X\le \sage{Trunc(Harm2_a,digits)}
  \end{equation*}
  For $X\geq 10^3$, we have
  \begin{equation*}
  \sage{Trunc(Harm1,digits)}\le \sum_{\ell\le X}\frac{\mu^2(\ell)}{\ell}-\frac{6}{\pi^2}\log X\le \sage{Trunc(Harm2,digits)}
  \end{equation*}
\end{lem}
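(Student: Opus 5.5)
The statement is quoted from \cite[Lemma 3.4]{Ramare*13d}, but here is how I would prove it directly. Write $F(X)=\sum_{\ell\le X}\mu^2(\ell)/\ell-\frac{6}{\pi^2}\log X$. The plan is to combine an asymptotic expansion with an explicit error term for large $X$ and a finite computation for small $X$.

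\textbf{Asymptotic part.} Use $\mu^2(\ell)=\sum_{d^2\mid\ell}\mu(d)$, which gives
\begin{equation*}
\sum_{\ell\le X}\frac{\mu^2(\ell)}{\ell}=\sum_{d\le\sqrt X}\frac{\mu(d)}{d^2}\sum_{k\le X/d^2}\frac1k .
\end{equation*}
Insert the explicit harmonic estimate $\sum_{k\le y}1/k=\log y+\gamma+O^*(1/y)$, valid for $y\ge1$. Extending the resulting series in $d$ to infinity shows that $F(X)$ tends to
\begin{equation*}
c_0=\frac{6}{\pi^2}\Bigl(\gamma-2\frac{\zeta'(2)}{\zeta(2)}\Bigr)\approx1.0438,
\end{equation*}
which sits inside $[1.040,1.048]$. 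The error is bounded by three pieces:
\begin{itemize}
\item the $O^*(d^2/X)$ terms summed over $d\le\sqrt X$, giving $O(X^{-1/2})$;
\item the tails $\sum_{d>\sqrt X}\mu(d)(\log(X/d^2)+\gamma)/d^2$ and $\sum_{d>\sqrt X}\mu(d)/d^2$, each $O(\log X/\sqrt X)$ by trivial bounds.
\end{itemize}
This yields an error $O^*(c\,\log X/\sqrt X)$ with an explicit $c$. For the tails one can do better by partial summation against $m(t)$, using Lemma~\ref{BOUND} or Lemma~\ref{m1} to get cancellation. Alternatively, applying Theorem~\ref{general++} [A] with $\alpha=1$ and $f(p)=1/p$ gives an error $O^*(\mathrm{E}_1^{(1)}\mathrm P_1/\sqrt X)$ directly, with $\mathrm P_1=1$.

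\textbf{Finite check.} The error becomes below $0.004$ once $X$ exceeds some explicit $X_0$, possibly around $10^6$ to $10^8$. For $X\le X_0$, $F$ is piecewise decreasing and jumps only at squarefree integers. So its extrema over $[1,X_0]$ occur at integers $n$: the maximum at $F(n)$ for squarefree $n$, the minimum at the left limits $F(n^-)$. Computing these (interval arithmetic, as in the paper's Sage code) gives the constants $0.578$ and $1.166$ on $[1,X_0]$, the minimum being near $X\to2^-$ where $F=1-\frac6{\pi^2}\log2$. It also confirms $[1.040,1.048]$ on $[10^3,X_0]$.

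\textbf{Main obstacle.} The hardest step is making the explicit error constant small enough that $X_0$ stays computationally reasonable, given the tight window of width $0.004$ around $c_0$. I would first try the $\sqrt X$-saving bound from Theorem~\ref{general++}. If its constant is too large, I would refine the tail estimates using Lemma~\ref{m1}.
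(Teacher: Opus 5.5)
The paper gives no argument for this lemma; it imports it from \cite[Lemma 3.4]{Ramare*13d}. Your plan is a correct, self-contained substitute for that citation.

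The identity $\mu^2(\ell)=\sum_{d^2\mid\ell}\mu(d)$, combined with $\sum_{k\le y}1/k=\log y+\gamma+O^*(1/y)$, gives the right limit $c_0=\frac{6}{\pi^2}\bigl(\gamma-2\zeta'(2)/\zeta(2)\bigr)=1.04389\ldots$. Your description of where the extrema of $F$ sit is also accurate:
\begin{itemize}
\item[(a)] $0.578$ truncates $F(2^-)=1-\frac{6}{\pi^2}\log 2=0.5786\ldots$;
\item[(b)] $1.166$ comes from $F(3)=\frac{11}{6}-\frac{6}{\pi^2}\log 3=1.16546\ldots$.
\end{itemize}
Compared with the citation, your route makes all four constants transparent, at the cost of a small computation.

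Two details need correcting. First, the accuracy you need is $c_0-1.040\approx 0.0039$, not $0.004$. Second, your ``main obstacle'' is not really one. Keep the tail in the combined form $\sum_{d>\sqrt X}\mu(d)\bigl(\log(X/d^2)+\gamma\bigr)/d^2$ and use $|\log(X/d^2)+\gamma|\le\gamma+2\log(d/\sqrt X)$. Adding the $\lfloor\sqrt X\rfloor/X$ coming from the harmonic remainders, trivial bounds already give $|F(X)-c_0|\le(3+\gamma)/\sqrt X+O(1/X)$, so $X_0=10^6$ suffices. The $\log X$ loss appears only if you split the tail into $\log X\sum\mu(d)/d^2$ and $2\sum\mu(d)\log d/d^2$. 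Your separately listed tail $\sum_{d>\sqrt X}\mu(d)/d^2$ is already contained in the first one.

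Your alternative via Theorem~\ref{general++}$\mathbf{[A]}$ is evaluated correctly. For $f(p)=1/p$ one has $\mathrm{t}_1(1)=\mathrm{P}_1=1$ and $\mathrm{w}_1^1=\mathrm{E}_1^{(1)}$ (since $2f(2)=1$). Moreover $H_f^1(0)=\prod_p(1-p^{-2})=6/\pi^2$ and $T_f^1=2\sum_p\frac{\log p}{p^2-1}=-2\zeta'(2)/\zeta(2)$. This gives error $1.044/\sqrt X$, and the numerical check would only be needed up to about $7.2\cdot10^4$.

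However, for this $f$ the theorem merely hands back its own input. $\mathrm{E}_1^{(1)}$ is in effect an explicit error constant for exactly $\sum_{\ell\le X}\mu^2(\ell)/\ell$: note that $\sqrt X\,|F(X)-c_0|\to c_0$ as $X\to1^-$, and $1.044$ is $c_0$ rounded up. So that route rests on a result of the same kind as the one you are proving. The elementary route is the genuinely independent one.
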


\noindent\phantom{x}\textbf{Estimating infinite sums and products.} We want to give rigorous estimates for infinite products or sums running over the prime numbers and whose signs behave regularly (not oscillating). That problem is usually reduced to that of evaluating convergent expressions of the form 
\begin{equation*}
\prod_{p}\bigg(1+\frac{v_p}{p^{\kappa}}\bigg),\qquad\qquad\sum_p\frac{u_p}{p^{\kappa}}
\end{equation*} for $\kappa>1$ and such that, for a sufficiently large $M$, there exists constants $b_M,c_M>0$ such that for any prime $p>M$, $|v_p|\leq b_M$, $|u_p|\leq c_M$.

Recall \cite[Cor. 5.2]{Dusart*18} and \cite[Thm. 4.2]{Dusart*18} by P. Dusart. 
\begin{lem}\label{Dusart1}\itshape Let $\pi(t)$ the prime counting function and $\theta(t)=\sum_{p\leq t}\log p$ the Chebyshev function. Then 
\begin{align*}\pi(t)&\leq \frac{t}{\log t}\bigg(1+\frac{1}{\log t}+\frac{2.53816}{\log^2t}\bigg),&&\text{ for any $t>1$},\phantom{xxxxxxxx}\\
\theta(t)&< t+\frac{0.2\ t}{\log^2 t},&&\text{for any $t>3\,594\,641$}.
\end{align*}
\end{lem}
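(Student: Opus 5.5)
Lemma \ref{Dusart1} is not something we intend to reprove from scratch. Both inequalities are quoted from P.~Dusart's explicit work, so the plan is mainly to check that the cited statements give exactly these constants and ranges. The bound on $\pi(t)$ should be \cite[Cor.~5.2]{Dusart*18}, and the bound on $\theta(t)$ should be \cite[Thm.~4.2]{Dusart*18}. The points to confirm are the constant $2.53816$, the validity for every $t>1$, and the threshold $3\,594\,641$ for the second estimate.

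For completeness, this is how the bounds are obtained in that source, and how we would recover them if needed.
\begin{enumerate}[(i)]
\item \emph{Estimate for $\theta$.} Start from an explicit form of the prime number theorem,
\begin{equation*}
|\theta(t)-t|\le \eta_k\,\frac{t}{\log^k t}\qquad (t\ge t_k).
\end{equation*}
This comes from the explicit formula for $\psi$, combined with a numerically verified Riemann hypothesis up to a large height, an explicit zero-free region, and the standard bound $|\psi(t)-\theta(t)|\le 1.43\sqrt t$. Taking $k=2$ and $\eta_2=0.2$ gives the one-sided bound $\theta(t)<t+0.2\,t/\log^2 t$ for large $t$. The finite range $3\,594\,641<t\le t_2$ is then closed by a direct computation of $\theta$ at consecutive primes.
\item \emph{Estimate for $\pi$.} Apply partial summation,
\begin{equation*}
\pi(t)=\frac{\theta(t)}{\log t}+\int_2^t\frac{\theta(u)}{u\log^2u}\,du,
\end{equation*}
and insert the upper bound for $\theta$ just obtained. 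Expanding the resulting integral gives
\begin{equation*}
\frac{t}{\log t}\Bigl(1+\frac{1}{\log t}+\frac{c}{\log^2 t}\Bigr)
\end{equation*}
for large $t$, with some explicit $c$ slightly below $2.53816$.
\end{enumerate}

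The main obstacle is making the $\pi$ bound hold for \emph{every} $t>1$. The constant $2.53816$ is essentially dictated by where the function $\bigl(\pi(t)\log t/t-1-1/\log t\bigr)\log^2 t$ attains its maximum, which is at a small $t$ (in Dusart's work this is near $t=355\,991$). So the intermediate range, between the point where the analytic bound takes over and where the tabulated data ends, must be checked by an exhaustive computation of $\pi$ at prime values of $t$, which are the only places the ratio can peak. In our text we will therefore just state the lemma with the precise citations to \cite{Dusart*18}.
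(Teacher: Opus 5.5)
Your proposal is correct and matches the paper, which gives no argument of its own and simply quotes \cite[Cor.~5.2]{Dusart*18} for the bound on $\pi(t)$ and \cite[Thm.~4.2]{Dusart*18} for the bound on $\theta(t)$, exactly the references you identify. Your sketch of how Dusart obtains these bounds is optional background and is not needed for the lemma.
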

Sharper bounds could be found, for instance, in \cite[Prop. 3.1]{Johnston*22} by D.R. Johnston. They are unnecessary in our computations as we obtain sufficiently sharp constants.

Suppose now that for any $p>M$, $v_p\geq 0$. Then
\begin{equation}\label{product1}
\prod_{p\leq M}\bigg(1+\frac{v_p}{p^{\kappa}}\bigg)\leq\prod_{p}\bigg(1+\frac{v_p}{p^{\kappa}}\bigg)\leq\prod_{p\leq M}\bigg(1+\frac{v_p}{p^{\kappa}}\bigg)\prod_{p> M}\bigg(1+\frac{v_p}{p^{\kappa}}\bigg),
\end{equation}
so, if we are able to evaluate $\prod_{p\leq M}(1+v_p/p^{\kappa})$, we just then need to estimate the  part $\prod_{p>M}(1+v_p/p^{\kappa})$. Continuing from \eqref{product1}, as $\log(1+x)\leq |x|$ for all $x>-1$, we have 
\begin{align}
&\sum_{p>M}\log\bigg(1+\frac{v_p}{p^{\kappa}}\bigg)\leq\sum_{p>M}\frac{v_p}{p^{\kappa}}=b_M\kappa\int_M^\infty\frac{\pi(t)}{t^{1+\kappa}}dt-\frac{b_M\pi(M)}{M^\kappa}\nonumber\\
&\quad\leq 
b_M\bigg(\frac{1}{\log M}\bigg(1+\frac{1}{\log M}+\frac{2.53816}{\log^2M}\bigg)\kappa\int_{M}^\infty\frac{dt}{t^\kappa}-\frac{\pi(M)}
{M^\kappa}\bigg):=b_MB_\kappa(M)\label{Bk_def}.
\end{align} 
We conclude that for any $M>1$ large enough so that $v_p<b_M$ for all $p>M$,  
\begin{align}\label{multup}
\prod_{p}\bigg(1+\frac{v_p}{p^{\kappa}}\bigg)\in\prod_{p\leq M}\bigg(1+\frac{v_p}{p^{\kappa}}\bigg)\times\left[1, \exp(b_M\cdot B_\kappa(M))\right].
\end{align}
On the other hand, suppose that, for any $p>M$, $v_p< 0$ and $v_p/p^\kappa\neq-1$ for all $p$. Then we obtain from \eqref{product1}
\begin{equation*}
\prod_{p\leq M}\bigg(1+\frac{v_p}{p^{\kappa}}\bigg)^{-1}\leq\prod_{p}\bigg(1+\frac{v_p}{p^{\kappa}}\bigg)^{-1}\leq\prod_{p\leq M}\bigg(1+\frac{v_p}{p^{\kappa}}\bigg)^{-1}\prod_{p> M}\bigg(1+\frac{v_p}{p^{\kappa}}\bigg)^{-1},
\end{equation*}
Moreover, for any $p>M$,
\begin{equation*}
 \bigg(1+\frac{v_p}{p^{\kappa}}\bigg)^{-1}= 1+\frac{|v_p|}{p^{\kappa}-|v_p|}\leq 1+\frac{b_M}{1-b_M/M^\kappa}\frac{1}{p^\kappa}
\end{equation*}
and we conclude that
\begin{equation}\label{multdown}
\prod_{p}\bigg(1+\frac{v_p}{p^{\kappa}}\bigg)\in
\prod_{p\leq M}\bigg(1+\frac{v_p}{p^{\kappa}}\bigg)\times\bigg[\exp\bigg(-\frac{b_M\cdot B_\kappa(M)}{1-b_M/M^\kappa}\bigg),1\bigg].
\end{equation}
Thus, if $M>3\,594\,641$, 
\begin{align}
 \sum_{p>M}\frac{\log p}{p^\kappa}&=\kappa\int_M^\infty\frac{\theta(t)}{t^{1+\kappa}}dt-\frac{\theta(M)}{M^\kappa}<
 \kappa\bigg(1+\frac{0.2}{\log^2M}\bigg)\int_M^\infty\frac{1}{t^{\kappa}}dt-\frac{\theta(M)}{M^\kappa}\nonumber\\
 &\qquad\qquad\quad<\bigg(1+\frac{0.2}{\log^2M}\bigg)\frac{\kappa}{(\kappa-1)M^{\kappa-1}}-\frac{[\theta(M)]}{M^\kappa}:=C_\kappa(M).\label{Ck_def}
\end{align}
Hence, 
\begin{align}\label{sumup}
 \sum_p\frac{u_p\log p}{p^\kappa}&\in\sum_{p\leq M}\frac{u_p\log p}{p^\kappa}+[0,c_MC_{\kappa}(M)]&&\text{ if $u_p\geq 0$ for any $p>M$},\\
 \label{sumdown}
 \sum_p\frac{u_p\log p}{p^\kappa}&\in\sum_{p\leq M}\frac{u_p\log p}{p^\kappa}+[-c_MC_{\kappa}(M),0]&&\text{ if $u_p< 0$ for any $p>M$}.
\end{align}

In all our product or sum calculations, we will set precision $M=10^8$. Observe that $\pi(10^8)=5\,761\,455$ and $[\theta(10^8)]=99\,987\,730$.\\


\noindent\phantom{x}\textbf{Auxiliaries.} In order to prove Theorem \ref{Main} and Theorem \ref{Mainbis}, we will need the following auxiliary mean-value results. Their proofs apply either Theorem \ref{general} or Theorem \ref{general++} and will be provided in $\S$\ref{auxiliaries}.
 
\begin{lem}
  \label{Aux1}\itshape
  When $X\ge 0$, we have
  \begin{align*}
   \phantom{xxxxxxxxxxxx}\mathbf{(i)}\qquad \sigma_1(X)&=\sum_{\ell\le X}
    \frac{\mu^2(\ell)\varphi(\ell)}{\varphi_{\frac12}(\ell)^2}&&\le \sage{Upper(THx1,digits)}\cdot X,\phantom{xxxxxxxxxxxxx}\\
  \mathbf{(ii)}\qquad   \sigma_2(X)&=\sum_{\ell\le X}\frac{\mu^2(\ell)\varphi(\ell)g_0(\ell)}{\varphi_{\frac{1}{2}}(\ell)^2}&&\le \sage{Upper(THx2,digits)}\cdot X,\\
    \mathbf{(iii)}\qquad\Sigma_1(X)&=\sum_{\ell\le X}\frac{\mu^2(\ell)\varphi(\ell)g_0(\ell)^2}{\varphi_{\frac{1}{2}}(\ell)^2}     &&\le \sage{Upper(TH1,digits)}\cdot X.
    \end{align*}
\noindent
In the three cases, the maximum is reached at $X=42$.
\end{lem}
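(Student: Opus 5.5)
The plan follows the Sage code for Lemma~4.5: apply Theorem~\ref{general} with $\alpha=0$ to each of the three multiplicative functions for large $X$, and verify small $X$ by direct computation. Write $f(\ell)=\varphi(\ell)h(\ell)/\varphi_{\frac12}(\ell)^2$ with $h\in\{1,g_0,g_0^2\}$. At a prime $p$ we have
\begin{equation*}
f(p)=\frac{(p-1)h(p)}{(\sqrt p-1)^2}=1+\frac{2}{\sqrt p}+O\Big(\frac1p\Big)\quad (p>2),
\end{equation*}
so $\alpha=0$ and $\beta=1/2$. Theorem~\ref{general} then applies with $q=1$ and any $\delta\in(0,1/2)$; following the constants in the code, I take $\delta=5/12$. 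This gives
\begin{equation*}
\sum_{\ell\le X}\mu^2(\ell)f(\ell)\le H_{f'}(0)X+\frac{(2-\delta)\Delta_1^\delta \overline H_{f'}(-\delta)}{1-\delta}X^{1-\delta}.
\end{equation*}
Since $g_0$ modifies only the prime $2$, I would isolate the Euler factor at $2$, namely $1-(1-f(2))/2-f(2)/4$ and its absolute-value analogue with exponents $1-\delta$, $2-2\delta$. The remaining product over odd $p$ is the same in all three cases.

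For the Euler products, the odd-prime factor of $H_{f'}(0)$ is $1-(1-f(p))/p-f(p)/p^2$. With $f(p)-1=(2\sqrt p)/(\sqrt p-1)^2\cdot(\ldots)$, this factor equals $1+O(p^{-3/2})$ with positive correction. I would evaluate it numerically over $p\le M=10^8$ (the value $P_1$) and bound the tail with \eqref{multup} and $B_{3/2}(M)$ from \eqref{Bk_def}, where $b_M$ is an explicit constant coming from $|v_p|\le 2/(1-M^{-1/2})$. The product $\overline H_{f'}(-\delta)$ is treated the same way: its terms are $|1-f(p)|p^{\delta-1}\asymp p^{\delta-3/2}$ and $f(p)p^{2\delta-2}$, so the tail exponent is $\kappa=3/2-\delta>1$. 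That tail is controlled by $B_{3/2-\delta}(M)$ with a constant of the shape $(2+M^{\delta-1/2}+M^{\delta-1})/(1-M^{-1/2})$. This yields, for $X\ge T=4\cdot10^9$, a bound $v\cdot X$ with
\begin{equation*}
v=\mathrm{(main\ constant)}+\mathrm{(error\ constant)}\,T^{-\delta}.
\end{equation*}

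For $X\le T$, I would compute $\max_{X\le T}\sigma(X)/X$ directly. The ratio only needs checking at integers, and just before each jump, where it is largest. This is a finite computation, and it shows the maximum is $\sigma(42)/42$ in each case. The final constant is the maximum of $v$ and this computed value. Since $v$ comes out slightly below the value at $42$, the stated constants are the values at $X=42$, which is why the lemma says the maximum is reached there.

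The main obstacle is balancing $\delta$ and the cutoff $T$. The error constant $\overline H_{f'}(-\delta)$ grows quickly as $\delta\to1/2$, and the term $X^{-\delta}$ must be small enough that $v$ stays below the peak value at $X=42$, while $T$ must remain within reach of exhaustive computation. I would first try $\delta=5/12$ with $T=4\cdot 10^9$, and adjust $T$ upward if the analytic bound does not fall below the value at $42$.
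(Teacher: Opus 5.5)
Your proposal matches the paper's proof step for step: Theorem~\ref{general} with $q=1$, $(\alpha,\beta)=(0,1/2)$ and $\delta=5/12$; the Euler factor at $2$ isolated; the odd-prime products computed up to $M=10^8$ with tails bounded via $B_{3/2}(M)$ and $B_{13/12}(M)$; the cutoff $T=4\cdot 10^9$; and a direct check of $\sigma(X)/X$ at integers below $T$. One small slip there: on $[n,n+1)$ the ratio is largest at $X=n$ itself, not just before the next jump.

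One numerical correction to your side remark. With these parameters the analytic constant $v$ actually slightly exceeds the value at $X=42$ in all three cases: about $6.2404$ against $6.2360$ in (i), and about $2.1205$ against $2.068$ in (iii). So the constants in the lemma are $v$, not the values at $42$. Your step of taking the maximum of the two already handles this. Only the claim that the overall maximum of the ratio sits at $42$ would need the larger $T$ you anticipate.
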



\begin{lem}
  \label{Aux2}\itshape
  When $X\ge 0$, we have
  \begin{equation*}
    \sum_{\ell\le
      X}\frac{\mu^2(\ell)\varphi(\ell)}{\ell}\bigg[g_0(\ell)\frac{\sqrt{\ell}}{\varphi_{\frac{1}{2}}(\ell)}\bigg]\bigg[g_2(\ell)\frac{\ell^\xi}{\varphi_{\xi}(\ell)}\bigg]
    \le \sage{Upper(TH2,digits)}\cdot X.
    \end{equation*}
    The maximum is reached at $X=6$.
\end{lem}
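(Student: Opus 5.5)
We write the summand as $\mu^2(\ell)f(\ell)$ with $f$ multiplicative,
\begin{equation*}
f(p)=\frac{\varphi(p)}{p}\,g_0(p)\,g_2(p)\,\frac{\sqrt p}{\varphi_{\frac12}(p)}\,\frac{p^\xi}{\varphi_\xi(p)}
=g_0(p)g_2(p)\frac{(p-1)\,p^{\xi-\frac12}}{(\sqrt p-1)(p^\xi-1)} .
\end{equation*}
For odd $p$ this equals $1+\frac{1}{\sqrt p}+O(p^{-\xi})$. Hence $f(p)=p^{-\alpha}+O(p^{-\beta})$ with $\alpha=0$ and $\beta=1/2$, since $\xi>1/2$.

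We are in the regime $\alpha=0$, $\beta-\alpha=1/2$, so Theorem~\ref{general++} does not apply (it requires $\beta-\alpha>1/2$). We therefore use Theorem~\ref{general} with $q=1$, $\alpha=0$ and some $\delta<1/2$; the code suggests $\delta=1/3$. This gives
\begin{equation*}
\sum_{\ell\le X}\mu^2(\ell)f(\ell)\le H(0)X+\frac{(2-\delta)\Delta_1^{\delta}\overline H(-\delta)}{1-\delta}X^{1-\delta}.
\end{equation*}
Here the local factors are $1-\frac{1-f(p)}{p}-\frac{f(p)}{p^2}$ and $1+\frac{|1-f(p)|}{p^{1-\delta}}+\frac{|f(p)|}{p^{2-2\delta}}$. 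Dividing by $X$, the bound becomes $H(0)+C_\delta X^{-\delta}$, which is below the claimed constant once $X\ge T_2=10^7$.

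The Euler products are evaluated by splitting off the prime $2$, whose factor involves $f(2)=g_0(2)g_2(2)\cdot\frac{2^{\xi-1/2}}{(\sqrt2-1)(2^\xi-1)}$. This is the expression coded as \texttt{f2\_2}, up to rewriting $(p-1)p^{\xi-1/2}/((\sqrt p-1)(p^\xi-1))$ in the form $1+(p^{\xi-1/2}+1)/(p^\xi-1)$. The odd primes up to $M=10^8$ are computed numerically (the values \texttt{P2} and \texttt{P2\_delta}).

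For the tail $p>M$, we bound $|1-f(p)|\le c\,p^{-1/2}$ with an explicit $c$ close to $1+M^{1/2-\xi}+\dots$, and $|f(p)|\le 1+\dots$. This puts the products into the form $\prod(1+v_p/p^\kappa)$ with $\kappa=3/2$ for $H(0)$ and $\kappa=7/6$ for $\overline H(-1/3)$. We then apply \eqref{multup} (and \eqref{multdown} where needed) with the quantity $B_\kappa(M)$ from \eqref{Bk_def}, which relies on Lemma~\ref{Dusart1}. The constant $\Delta_1^{1/3}$ is computed explicitly.

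For $X<10^7$ we verify the inequality directly by computer. The sum is a step function increasing only at squarefree integers, so it suffices to compute $\max_{\ell\le 10^7}\frac{1}{\ell}\sum_{n\le\ell}\mu^2(n)f(n)$, evaluating each ratio just at the jump points. The maximum occurs at $X=6$ and gives the value \texttt{mx2}. The final constant is $\max(\text{analytic bound},\,\text{\texttt{mx2}})$.

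The main obstacle is balancing $\delta$ and the threshold $T_2$. The analytic constant $\overline H(-\delta)$ grows quickly as $\delta\to1/2$, because the tail sum $\sum p^{-(3/2-\delta)}$ converges slowly. On the other hand, a small $\delta$ forces a large numerical verification range. One must also rigorously control the tail $\sum_{p>M}p^{-7/6}$, which is only modest at $M=10^8$. If $\delta=1/3$ with $T_2=10^7$ is not enough, we would raise $T_2$ rather than push $\delta$ further.
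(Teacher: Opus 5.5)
Your proposal is correct and follows essentially the same route as the paper. Both apply Theorem~\ref{general} with $q=1$, $(\alpha,\beta)=(0,1/2)$ and $\delta=1/3$, split off the factor at $p=2$, compute the odd primes up to $M=10^8$ and bound the tails through $B_{3/2}(M)$ and $B_{7/6}(M)$, use the analytic bound for $X\ge 10^7$, and check $[1,10^7]$ directly by computer, where the maximum $Z_1(6)/6$ appears; at $X=10^7$ the analytic bound is already below that value, so you do not need to enlarge the verification range.
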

\noindent


\begin{cor}\itshape
  \label{Cor1}
  When $X\ge D\ge0$, we have
  \begin{equation*}
   \Sigma_2(X)= \sum_{\ell\le \min\{D, X/10^{12}\}}
    \frac{\mu^2(\ell)\varphi(\ell)}{\ell^{3/2}\log(X/\ell)}\bigg[\frac{g_0(\ell)\sqrt{\ell}}{\varphi_{\frac{1}{2}}(\ell)}\bigg]\bigg[\frac{g_2(\ell)\ell^\xi}{\varphi_{\xi}(\ell)}\bigg]
    \le \sage{Upper(L1,digits)}\cdot\sqrt{D}.
  \end{equation*}
\end{cor}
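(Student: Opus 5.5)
The plan is to reduce the corollary to Lemma~\ref{Aux2} by two elementary steps: bound the logarithm from below uniformly, then remove the weight $\ell^{-1/2}$ by partial summation.

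Write
\begin{equation*}
a(\ell)=\frac{\mu^2(\ell)\varphi(\ell)}{\ell}\bigg[\frac{g_0(\ell)\sqrt{\ell}}{\varphi_{\frac12}(\ell)}\bigg]\bigg[\frac{g_2(\ell)\ell^\xi}{\varphi_\xi(\ell)}\bigg]\ge 0,
\qquad A(t)=\sum_{\ell\le t}a(\ell).
\end{equation*}
Lemma~\ref{Aux2} says exactly that $A(t)\le \Theta\, t$ for every $t\ge0$, where $\Theta$ is the constant of that lemma (\texttt{TH2} in the source). The summand of $\Sigma_2(X)$ equals $a(\ell)\,\ell^{-1/2}/\log(X/\ell)$.

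\emph{First step.} In the range of summation we have $\ell\le X/10^{12}$, so $\log(X/\ell)\ge 12\log 10$. Since every term is nonnegative, we may also enlarge the range $\ell\le\min\{D,X/10^{12}\}$ to $\ell\le D$. This gives
\begin{equation*}
\Sigma_2(X)\le \frac{1}{12\log 10}\sum_{\ell\le D}\frac{a(\ell)}{\sqrt{\ell}}.
\end{equation*}
If $D<1$ the sum is empty, so we may assume $D\ge1$.

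\emph{Second step.} Apply Abel summation with the weight $t^{-1/2}$:
\begin{equation*}
\sum_{\ell\le D}\frac{a(\ell)}{\sqrt\ell}=\frac{A(D)}{\sqrt D}+\frac12\int_1^D\frac{A(t)}{t^{3/2}}\,dt.
\end{equation*}
Inserting $A(t)\le\Theta t$ bounds the boundary term by $\Theta\sqrt D$. It bounds the integral by $\frac{\Theta}{2}\int_1^D t^{-1/2}\,dt=\Theta(\sqrt D-1)\le\Theta\sqrt D$. Altogether $\sum_{\ell\le D}a(\ell)/\sqrt\ell\le 2\Theta\sqrt D$.

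Combining the two steps yields
\begin{equation*}
\Sigma_2(X)\le \frac{2\Theta}{12\log 10}\sqrt D.
\end{equation*}
This is precisely the constant \texttt{L1} encoded in the source: its two summands $\Theta/(12\log10)$ correspond to the boundary term and the integral term respectively.

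There is no real obstacle here; the substantive input is Lemma~\ref{Aux2}. The only points needing care are the use of positivity to drop the condition $\ell\le X/10^{12}$ after it has served to bound the logarithm, and checking the rounding so that the displayed value is a valid upper bound.
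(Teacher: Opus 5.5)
Your argument is correct and yields exactly the paper's constant $2\Theta/(12\log 10)$ from the same ingredients: Lemma~\ref{Aux2}, partial summation, and $\log(X/\ell)\ge 12\log 10$. The paper only orders things differently: it keeps $1/\log(X/t)$ inside the partial-summation weight, after checking that $t\mapsto 1/(\sqrt{t}\log(X/t))$ decreases when $X/t\ge 10^{12}$ and substituting $u=X/t$, and it uses $\log(X/Y)\ge 12\log 10$ with $Y=\min\{D,X/10^{12}\}$ only at the end, so bounding the logarithm pointwise first, as you do, is a simplification that costs nothing.
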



\begin{lem}\itshape
  \label{Aux3}
  When $X\ge 0$, we have
  \begin{equation*}
    \sum_{\ell\le X}\frac{\mu^2(\ell)\varphi(\ell)}{\ell}\bigg[g_2(\ell)\frac{\ell^\xi}{\varphi_{\xi}(\ell)}\bigg]^2
    \le \sage{Upper(TH3,digits)}\cdot X.
    \end{equation*}
    \noindent
The maximum is reached at $X=2$.
\end{lem}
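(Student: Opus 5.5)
We treat the sum as a mean value of a multiplicative function supported on squarefree integers and apply Theorem \ref{general++}. Write
\[
f(\ell)=\frac{\varphi(\ell)}{\ell}\Big[g_2(\ell)\frac{\ell^\xi}{\varphi_\xi(\ell)}\Big]^2 .
\]
For an odd prime $p$ this gives
\[
f(p)=\frac{(1-1/p)}{(1-p^{-\xi})^2}=1+\frac{2}{p^\xi}+O(p^{-2\xi}).
\]
So $f(p)=p^{-\alpha}+O(p^{-\beta})$ with $\alpha=0$ and $\beta=\xi$. Since $\beta-\alpha=\xi>1/2$, case $\mathbf{[B]}$ of Theorem \ref{general++} applies with $q=1$. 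The prime $2$ carries the factor $g_2(2)^2$, which is why the sage code introduces $f_3(2)=g_2(2)^2\cdot 2^{2\xi-1}/(2^\xi-1)^2$. Case $\mathbf{[B]}$ yields
\[
\sum_{\ell\le X}\mu^2(\ell)f(\ell)= H_{f'}(0)\,X+O^*\Big(\big(1+\tfrac{2}{1}\big)\mathrm{w}'_0\,\mathrm{P}_0\,X^{1/2}\Big),
\]
where $\mathrm{t}_0(1)=1$.

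The first step is to evaluate the constants as Euler products. The local factor of $H_{f'}(0)$ at $p$ is $1-(1-f(p))/p-f(p)/p^2$. The one at $p=2$ (computed with $f_3(2)$) is split off, and the product over odd $p$ is computed up to $M=10^8$. The constant $\mathrm{P}_0=\prod_p(1+|f(p)-1|/(\sqrt p-1))$ is treated the same way. Here $|f(p)-1|\approx 2/p^\xi$, so the terms decay like $p^{-1/2-\xi}$, which is summable since $1/2+\xi>1$. The factor $\mathrm{w}'_0$ comes from $\mathrm{E}_1^{(1)}=1.044$, $\mathrm{E}_1^{(2)}=0.232$ and $|2^0f(2)-1|=|f_3(2)-1|$.

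The tails beyond $10^8$ are bounded via \eqref{multup} using Lemma \ref{Dusart1}, that is, through the quantity $B_\kappa(M)$. For this I need a bound $v_p\le b_M$ on $p^{\kappa}(f(p)-1)$ expressed in powers $p^{-\xi}$, $p^{-1}$, and this is where the constants $(2+M^{\xi-2})/(1-M^{-\xi})^2$ in the code come from. The outcome is a bound of the form
\[
\sum_{\ell\le X}\mu^2(\ell)f(\ell)\le F\,P_3\,X+3\,W\,P_3'\,X^{1/2}.
\]
This gives a ratio to $X$ of at most $F P_3+3WP_3'\,T^{-1/2}$ once $X\ge T=10^6$.

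For $X<10^6$ I verify the bound by direct computation. Since the partial sums are step functions increasing in $\ell$, the ratio $\sum_{\ell\le X}/X$ is maximized just at integers $X$. A sieve computation of $f(\ell)$ for squarefree $\ell\le10^6$ then gives the maximum $\approx 2.6771$ at $X=2$. This comes from $\ell=1$ and $\ell=2$, where $f(2)=g_2(2)^2 2^{2\xi}/(2(2^\xi-1)^2)$ is large. The stated constant is the larger of this computed maximum and the analytic bound valid for $X\ge 10^6$.

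The main obstacle is making the analytic bound for $X\ge10^6$ fall below the numerically observed maximum. $\mathrm{P}_0$ converges slowly, since $\xi$ is close to $1$ and the terms behave like $p^{-1/2-\xi}$. So the threshold $T$ may have to be pushed up, or $M$ enlarged, until the error term $3WP_3'T^{-1/2}$ is small enough. I will first try $T=10^6$; if it fails, I will increase $T$ and extend the direct computation accordingly.
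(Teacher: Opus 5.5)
Your proposal is correct and follows essentially the paper's own proof. Both apply Theorem~\ref{general++}$\mathbf{[B]}$ with $q=1$ and $(\alpha,\beta)=(0,\xi)$, obtaining $H_{f'}(0)X+O^*(3\,\mathrm{w}'\mathrm{P}\sqrt{X})$, evaluate the Euler products up to $M=10^8$ with tails handled by \eqref{multup}, derive a linear bound for $X\ge 10^6$, and check directly over integers $X\le 10^6$, where the maximum (about $2.677$) is attained at $X=2$. Two harmless remarks: first, $f(p)-1=2p^{-\xi}-p^{-1}+\cdots$, so your remainder $O(p^{-2\xi})$ is inaccurate (the $p^{-1}$ term dominates it since $2\xi>1$), though only $f(p)=1+O(p^{-\xi})$ is needed; second, your worry about the threshold is unfounded, since at $T=10^6$ the analytic bound is below $1.7$, well under $2.677$.
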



\begin{cor}\itshape
  \label{Cor2}
  Let $X\ge D\ge0$. Then
  \begin{equation*}
    \Sigma_3(X)=\sum_{ \ell\le \min\{D, X/10^{12}\}}
      \frac{\mu^2(\ell)\varphi(\ell)}{\ell^2\log(X/\ell)^2}\bigg[g_2(\ell)\frac{\ell^\xi}{\varphi_{\xi}(\ell)}\bigg]^2
    \le  \sage{Upper(L2,digits)}
  \end{equation*}
\end{cor}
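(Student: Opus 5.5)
The plan is partial summation against the bound of Lemma \ref{Aux3}. Write
\begin{equation*}
a(\ell)=\frac{\mu^2(\ell)\varphi(\ell)}{\ell}\bigg[g_2(\ell)\frac{\ell^\xi}{\varphi_{\xi}(\ell)}\bigg]^2\ge 0,\qquad A(t)=\sum_{\ell\le t}a(\ell),\qquad h(t)=\frac{1}{t\log^2(X/t)} .
\end{equation*}
Then $\Sigma_3(X)=\sum_{\ell\le Y}a(\ell)h(\ell)$ with $Y=\min\{D,X/10^{12}\}$. Lemma \ref{Aux3} gives $0\le A(t)\le \theta_3 t$ for all $t\ge0$, where $\theta_3$ is the constant $\sage{Upper(TH3,digits)}$ appearing there. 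If $Y<1$ the sum is empty and there is nothing to prove, so assume $Y\ge1$, hence $X\ge 10^{12}$.

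On $[1,Y]$ we have $\log(X/t)\ge \log 10^{12}=12\log 10=:c>2$. Differentiating,
\begin{equation*}
h'(t)=-\frac{1}{t^2\log^2(X/t)}+\frac{2}{t^2\log^3(X/t)}\le 0 ,
\end{equation*}
so $h$ is decreasing on $[1,Y]$. Abel summation gives
\begin{equation*}
\Sigma_3(X)=A(Y)h(Y)+\int_1^Y A(t)\,(-h'(t))\,dt .
\end{equation*}
Both $A(t)$ and $-h'(t)$ are nonnegative, so we may insert $A(t)\le\theta_3 t$ in each term. This yields
\begin{equation*}
\Sigma_3(X)\le \theta_3\,\frac{1}{\log^2(X/Y)}+\theta_3\int_1^Y\Big(\frac{1}{t\log^2(X/t)}-\frac{2}{t\log^3(X/t)}\Big)dt .
\end{equation*}

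For the boundary term, $\log(X/Y)\ge c$ bounds it by $\theta_3/c^2$. For the integral, substitute $u=\log(X/t)$. With $L=\log(X/Y)$, it becomes
\begin{equation*}
\int_L^{\log X}\Big(\frac1{u^2}-\frac{2}{u^3}\Big)du=\Big(\frac1L-\frac1{L^2}\Big)-\Big(\frac1{\log X}-\frac1{\log^2X}\Big).
\end{equation*}
Since $\log X\ge L\ge c>1$, the subtracted bracket is nonnegative and can be dropped. Moreover $u\mapsto 1/u-1/u^2$ is decreasing for $u\ge2$, so the remaining bracket is at most $1/c-1/c^2$. Adding the two contributions gives
\begin{equation*}
\Sigma_3(X)\le\theta_3\Big(\frac1c-\frac1{c^2}\Big)+\frac{\theta_3}{c^2},
\end{equation*}
which is exactly the constant in the statement.

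The only delicate points are checking the sign of $h'$, which is what allows $A(t)\le\theta_3 t$ to be used inside the integral, and checking the monotonicity of $1/u-1/u^2$. Both follow from $\log(X/t)\ge 12\log 10>2$ on the range of summation, which is guaranteed by the cutoff $\ell\le X/10^{12}$.
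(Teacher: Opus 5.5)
Your proposal is correct and matches the paper's proof essentially step for step. Both use Abel summation against the bound $A(t)\le\theta_3 t$ of Lemma~\ref{Aux3} (valid because $t\mapsto 1/(t\log^2(X/t))$ decreases when $X/t\ge 10^{12}$), bound the boundary term by $\theta_3/(12\log 10)^2$, and evaluate the integral through the antiderivative $-1/u+1/u^2$ together with the monotonicity of $1/u-1/u^2$ for $u\ge 2$.
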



\begin{lem}\itshape
  \label{auxnu2}
  When $X\ge 1$, we have
   \begin{align*}
    \mathbf{(i)}\qquad \Xi_1(X)&=\sum_{\ell\le X}
    \frac{\mu^2(\ell)\varphi(\ell)}{\ell\varphi_{\frac{1}{2}}(\ell)}&&\le \sage{Upper(THxxx_aux11,digits)}\cdot\sqrt{X},\\
  \phantom{xxxxxxxx} \mathbf{(ii)}\qquad \Xi_2(X)&=\sum_{\ell \le X}
    \frac{\mu^2(\ell)\varphi(\ell)}{\ell^{3/2}}\bigg[g_0(\ell)\frac{\sqrt{\ell}}{\varphi_{\frac{1}{2}}(\ell)}\bigg]&&\le \sage{Upper(THx_aux11,digits)}\cdot\sqrt{X}.\phantom{xxxxxxxxx}
  \end{align*}
\end{lem}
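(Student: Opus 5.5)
We will apply Theorem \ref{general} with $q=1$ and $\alpha=1/2$, separately for the two multiplicative functions
\[
f_1(\ell)=\frac{\varphi(\ell)}{\ell\varphi_{\frac12}(\ell)},\qquad f_2(\ell)=g_0(\ell)\frac{\varphi(\ell)}{\ell\varphi_{\frac12}(\ell)} .
\]
For a prime $p$ we have $f_1(p)=\frac{p-1}{p(\sqrt p-1)}=\frac{\sqrt p+1}{p}=\frac1{\sqrt p}+\frac1p$. Hence $f_1(p)p^{1/2}=1+p^{-1/2}$, and the hypothesis of Theorem \ref{general} holds with $\alpha=1/2$, $\beta=1$. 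The function $f_2$ coincides with $f_1$ at every odd prime and differs only by the factor $g_0(2)$ at $p=2$, so the same $\alpha,\beta$ apply. Since $\beta-\alpha=1/2$ we cannot use Theorem \ref{general++} at the edge, so we take $\delta=1/3<\min\{\beta-\alpha,1/2\}$.

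Theorem \ref{general} then gives
\[
\Xi_i(X)\le 2H_{f_i'}(0)\sqrt X+\frac{(1-\delta)\Delta_1^{\delta}\overline H_{f_i'}(-\delta)}{1/2-\delta}X^{1/2-\delta},
\]
so that
\[
\frac{\Xi_i(X)}{\sqrt X}\le 2H_{f_i'}(0)+4\Delta_1^{1/3}\overline H_{f_i'}(-1/3)X^{-1/3}.
\]
The Euler factors are explicit. Since $1-f(p)\sqrt p=-p^{-1/2}$ and $f(p)\sqrt p=1+p^{-1/2}$, the factor of $H(0)$ at an odd prime is $1+p^{-3/2}-p^{-2}-p^{-5/2}$. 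The factor of $\overline H(-1/3)$ is $1+p^{-7/6}+(1+p^{-1/2})p^{-4/3}$. We isolate the factor at $p=2$, which is the one depending on $i$ and on $g_0(2)$. We then compute the product over $3\le p\le M=10^8$ numerically and bound the tail $p>M$ with \eqref{multup}. For $H(0)$ the dominant term is $p^{-3/2}$ with $b_M=1$, which brings in $B_{3/2}(M)$. For $\overline H(-1/3)$ we regroup the terms as $p^{-7/6}(1+p^{-1/6}+p^{-2/3})$, giving $b_M\le 1+M^{-1/6}+M^{-2/3}$ against $B_{7/6}(M)$. Lemma \ref{Dusart1} supplies the prime-counting bounds inside $B_\kappa(M)$.

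Once these constants are computed, we fix a threshold $X_0=5\cdot 10^6$. For $X\ge X_0$ the right-hand side above is at most
\[
2H(0)e^{B_{3/2}(M)}+4\Delta_1^{1/3}\overline H(-1/3)\,e^{(1+M^{-1/6}+M^{-2/3})B_{7/6}(M)}X_0^{-1/3}.
\]
For $1\le X\le X_0$ we compute $\max \Xi_i(X)/\sqrt X$ directly. It suffices to evaluate this at the squarefree integers, since between consecutive jumps the quotient decreases. The stated constant is then the maximum of the two quantities.

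The main obstacle is numerical. The secondary term decays only like $X^{-1/3}$, with the fairly large constant $4\Delta_1^{1/3}\overline H(-1/3)$, so $X_0$ must be large (millions) before the analytic bound drops near $2H(0)$. The direct computation therefore has to reach that range, and it must be carried out rigorously with interval arithmetic. We would also check other values of $\delta$, such as $5/12$ or $4/9$, to see whether a smaller $X_0$ suffices. We expect $\delta=1/3$ to balance the growth of $\overline H(-\delta)$ and $\Delta_1^\delta$ against the decay $X_0^{-\delta}$.
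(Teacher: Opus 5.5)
Your proposal is correct and follows essentially the same route as the paper. The paper also applies Theorem~\ref{general} with $q=1$, $(\alpha,\beta)=(1/2,1)$ and $\delta=1/3$, uses the same Euler factors, and bounds the tails by $B_{3/2}(M)$ and $(1+M^{-1/6}+M^{-2/3})B_{7/6}(M)$. It likewise combines the analytic bound at the threshold $5\cdot 10^{6}$ with a direct computation of $\max \Xi_i(X)/\sqrt{X}$ below that threshold.
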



\begin{lem}\itshape
  \label{auxf2}
  When $X\ge1$, we have
  \begin{equation*}
    c_1\log X + \sage{Lower(logmm,digits)}
    \le K_1(X)=\sum_{\ell\le X}
     \frac{\mu^2(\ell)\varphi(\ell)}{\ell^2}\bigg[g_2(\ell)\frac{\ell^\xi}{\varphi_{\xi}(\ell)}\bigg]
     \le c_1\log X + \sage{Upper(logmx,digits)}
   \end{equation*}
   where $c_1\in(\sage{Lower(lc1,digits)},\sage{Upper(uc1,digits)})$.
\end{lem}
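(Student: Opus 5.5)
We prove Lemma~\ref{auxf2} by applying Theorem~\ref{general++}\,$\mathbf{[A]}$ with $\alpha=1$, $q=1$, to the multiplicative function
\begin{equation*}
f(\ell)=\frac{\varphi(\ell)}{\ell^2}\,g_2(\ell)\frac{\ell^\xi}{\varphi_\xi(\ell)}
\end{equation*}
restricted to squarefree $\ell$. At a prime $p$ we have
\begin{equation*}
f(p)=\frac{p-1}{p^2}\cdot\frac{p^\xi}{p^\xi-1}\,g_2(p).
\end{equation*}
Here $g_2(p)=1$ for $p>2$, and $g_2(2)=2.9506(1-2^{-\xi})$.

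For odd $p$ this gives
\begin{equation*}
pf(p)-1=\frac{1-p^{\xi-1}}{p(p^\xi-1)}\cdot p^\xi \approx \frac{1}{p^{\xi}}\cdot p^{\xi-1}(\ldots),
\end{equation*}
and in any case $|pf(p)-1|\ll p^{-\xi}$. So $f(p)=1/p+O(1/p^{1+\xi})$ with $\beta=1+\xi$. Since $\xi>1/2$ we have $\beta-\alpha>1/2$, so the hypotheses of $\mathbf{[A]}$ hold.

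The theorem then yields
\begin{equation*}
K_1(X)=H_f^1(0)\Bigl(\log X+T_f^1+\gamma\Bigr)+O^*\bigl(\mathrm{w}_1^1\,\mathrm{P}_1\,X^{-1/2}\bigr).
\end{equation*}
This gives $c_1=H_f^1(0)$, whose Euler factor at $p$ is $1-(1-pf(p))/p-f(p)$. This matches the Sage quantities \texttt{F2\_LX} (the factor at $2$) and \texttt{P0\_LX} (the product over odd $p\le M$). The tail over $p>M$ has negative $v_p$ of size $O(p^{-1-\xi})$, so \eqref{multdown} and \eqref{multup} bracket $c_1$ between $F_2P_0$ and $F_2P_0\exp\bigl((1-M^{-\xi})^{-1}B_{1+\xi}(M)\bigr)$. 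That is the stated interval $(\texttt{lc1},\texttt{uc1})$.

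The constant term is $c_1(T_f^1+\gamma)$. We evaluate $T_f^1$ as the $p=2$ term $\log 2/(f(2)+1)$ (in the Sage code, $\log2/(f_2+1)$ with the $2f$ normalisation) plus a sum over odd primes $p\le M$. The tail of that sum is controlled by $C_{1+\xi}(M)$ via \eqref{sumup}/\eqref{sumdown}, since the summand is $\ll \log p/p^{1+\xi}$. The error constant $\mathrm{w}_1^1\mathrm{P}_1$ is $W2\_LX\cdot\mathrm{P}_1$: $W2\_LX$ is the $2\nmid q$ weight built from $\mathrm{E}_1^{(1)}=1.044$ and $\mathrm{E}_1^{(2)}=0.232$, and $\mathrm{P}_1$ comes from \texttt{F2P1\_LX} times \texttt{P1\_LX}.

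This gives the bound for $X\ge N=2\cdot 10^8$ with constants
\begin{equation*}
c_1(\mathrm{Sumx})\pm \mathrm{w}\mathrm{P}/\sqrt{N},
\end{equation*}
which fall inside $[\texttt{logmm},\texttt{logmx}]$ with $c_1$ taken at the appropriate endpoint.

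For $1\le X\le N$ we compute $K_1(X)-c_1\log X$ directly. It suffices to evaluate it at integers and just before each integer, since between consecutive squarefree integers the function only decreases in $X$. We take the maximum and minimum over $[1,N]$, using the upper value of $c_1$ for the minimum and the lower value for the maximum, so the bound holds for any $c_1$ in the interval. The recorded extremes are \texttt{logmx} and \texttt{logmm}; the minimum value $0.5055\ldots$ is plausibly attained at small $X$.

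The main obstacle is bookkeeping. The tail estimate for $H$ must be tight enough, and the error term $\mathrm{wP}/\sqrt{N}$ must be small enough, that the asymptotic regime does not exceed the numerically found extremes; this is why $N$ must be as large as $2\cdot10^8$. The exhaustive computation up to $N$ is routine but costly, and it must be done with interval arithmetic.
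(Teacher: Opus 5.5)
Your route is the paper's: Theorem~\ref{general++}$\mathbf{[A]}$ with $q=1$ and $(\alpha,\beta)=(1,1+\xi)$, and explicit enclosures of $H_f(0)$, $T_f$ and $\mathrm{w}\mathrm{P}$ to handle $X\ge 2\cdot 10^8$. Below that range you use the same reduction $K_1(X)-c_1\log X\in K_1([X])-c_1\log[X]+[-c_1\log(([X]+1)/[X]),0]$, taking the lower end of the $c_1$-interval for the maximum and the upper end for the minimum. That skeleton is sound. What fails is the numerical identification of $c_1$ that you take over from the Sage data, and with it your claim that the recorded extremes come out.

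With $\alpha=1$, $s=0$ the Euler factor is $1-\frac{1-pf(p)}{p}-\frac{f(p)}{p}=(1-\frac1p)(1+f(p))$. Your ``$-f(p)$'' would make every odd factor equal to $1-1/p$.

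For odd $p$ one has $pf(p)-1=\frac{p-p^\xi}{p(p^\xi-1)}>0$, so $f(p)>1/p$. The odd factor is therefore $1+\frac{p^{2-\xi}-2p+1}{p^{3-\xi}(p^\xi-1)}>1-p^{-2}$. Hence $\prod_{p\ge3}>8/\pi^2$, so $c_1>\frac{1+f(2)}{2}\cdot\frac{8}{\pi^2}>0.704$; numerically $c_1\approx 0.7133$.

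The Sage value \texttt{P0\_LX}$\,\approx 0.6940$ is the product taken with numerator $p^{2-\xi}-2p-1$, which is a sign slip. So the interval $(0.6030,0.6031)$ produced by \texttt{lc1}, \texttt{uc1} does not contain $c_1$, and the lemma with that interval is false. Already $K_1(X)\le 0.6031\log X+O(1)$ fails as $X\to\infty$, and $K_1(3)-0.6030\log 3\approx 1.415$ exceeds \texttt{logmx}$\,=1.2942\ldots$.

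The recorded extremes are consistent only with the correct value. Indeed \texttt{logmx} equals $K_1(3)-0.7133\log 3$ and \texttt{logmm} equals $1-0.7134\log 2$.

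So recompute the odd-prime product from the correct factor. Your argument then goes through unchanged, but with $c_1\in(0.7133,0.7134)$, and that corrected upper end must also be used in Corollary~\ref{auxnu1}.

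A minor point: for $p>M$ the quantity $v_p$ changes sign near $p\approx 2\cdot 10^8$. So neither \eqref{multup} nor \eqref{multdown} applies verbatim, and you need a two-sided bound $\exp(\pm O(B_{1+\xi}(M)))$. This costs only about $10^{-9}$.
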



 \begin{cor}\itshape
  \label{auxnu1}
  When $X\geq 10^{12}$, we have
  \begin{equation*}
     L_1(X)=\sum_{\ell\le X/10^{12}}
     \frac{\mu^2(\ell)\varphi(\ell)}{\ell^2\log
       (X/\ell)}\bigg[g_2(\ell)\frac{\ell^\xi}{\varphi_{\xi}(\ell)}\bigg]
     \le \sage{Upper(loglemma1_1,digits)}\cdot\log\bigg(\frac{\log X}{12\log 10}\bigg)+\sage{Upper(loglemma1_2,digits)}.
  \end{equation*}
\end{cor}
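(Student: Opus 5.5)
Corollary~\ref{auxnu1} follows from Lemma~\ref{auxf2} by summation by parts. Write
\begin{equation*}
a_\ell=\frac{\mu^2(\ell)\varphi(\ell)}{\ell^2}\,g_2(\ell)\frac{\ell^\xi}{\varphi_\xi(\ell)}\ge 0,\qquad K_1(t)=\sum_{\ell\le t}a_\ell ,
\end{equation*}
and set $Y=X/10^{12}\ge1$. Then
\begin{equation*}
L_1(X)=\int_{1^-}^{Y}\frac{dK_1(t)}{\log(X/t)}
=\frac{K_1(Y)}{\log(X/Y)}-\int_1^Y K_1(t)\,\frac{dt}{t\log^2(X/t)},
\end{equation*}
because $\frac{d}{dt}\bigl(1/\log(X/t)\bigr)=1/(t\log^2(X/t))$. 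The weight here is positive and increasing in $t$.

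Insert the upper bound $K_1(t)\le c_1\log t+C_+$, with $C_+$ the constant $\sage{Upper(logmx,digits)}$, in the boundary term. Insert the lower bound $K_1(t)\ge c_1\log t+C_-$ in the integral, which carries a minus sign. One must first check $c_1\log t+C_-\ge0$; if it fails on some range, use $K_1\ge0$ there instead. The cleaner route is to keep the main term exact. Since $K_1$ is a step function whose jumps are the $a_\ell$, the bounds of Lemma~\ref{auxf2} control the error $K_1(t)-c_1\log t$, which lies in $[C_-,C_+]$. So decompose
\begin{equation*}
L_1(X)=c_1\int_1^Y\frac{dt}{t\log(X/t)}+\int_{1^-}^Y\frac{d(K_1(t)-c_1\log t)}{\log(X/t)} .
\end{equation*}
The first integral equals $\log\frac{\log X}{\log(X/Y)}=\log\frac{\log X}{12\log10}$, which gives the main term with coefficient $c_1<\sage{Upper(uc1,digits)}$.

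For the second integral, integrate by parts again with $R(t)=K_1(t)-c_1\log t$. Its contribution is
\begin{equation*}
\frac{R(Y)}{\log(X/Y)}-\frac{R(1^-)}{\log X}-\int_1^Y\frac{R(t)\,dt}{t\log^2(X/t)} .
\end{equation*}
Here $R(1^-)=0$. Bounding $R(Y)\le C_+$ and $-R(t)\le -C_-$ gives at most
\begin{equation*}
\frac{C_+}{12\log10}-C_-\Bigl(\frac{1}{12\log10}-\frac{1}{\log X}\Bigr).
\end{equation*}
Since $C_-=\sage{Lower(logmm,digits)}>0$, this second piece is negative and can be dropped. What remains is at most $C_+/(12\log 10)$, matching $\sage{Upper(loglemma1_2,digits)}$.

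The one delicate point is the sign bookkeeping: the lower bound of Lemma~\ref{auxf2} must be used only where a minus sign appears, and $C_->0$ lets us discard that term. The rest is routine.
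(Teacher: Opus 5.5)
Your proof is correct and follows the paper's argument. You do the same summation by parts against $1/\log(X/t)$, put the upper bound of Lemma~\ref{auxf2} in the boundary term and the lower bound in the subtracted integral, and drop the leftover $C_-\bigl(\frac{1}{\log X}-\frac{1}{12\log 10}\bigr)\le 0$. Subtracting $c_1\log t$ before integrating by parts just makes automatic the cancellation of the $c_1$ terms that the paper carries out by hand after the substitution $v=12\log 10+\log Y-u$. Incidentally, your worry about the sign of $c_1\log t+C_-$ is unfounded: inserting a lower bound into an integral with nonnegative weight and a minus sign is valid whatever its sign.
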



\begin{lem}\itshape
  \label{auxf22}
  When $X\ge1$, we have
  \begin{equation*}
   c_2\log X +\sage{Lower(log2mm,digits)}
    \le K_2(X)=\sum_{\ell\le X}
     \frac{\mu^2(\ell)\varphi(\ell)}{\ell^2}[g_2(\ell)\frac{\ell^\xi}{\varphi_{\xi}(\ell)}]^2
     \le c_2\log X +\sage{Upper(log2mx,digits)}.
   \end{equation*}
   for  $c_2\in(\sage{Lower(lc2,digits)},\sage{Upper(uc2,digits)})$.
\end{lem}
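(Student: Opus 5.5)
The plan is to apply Theorem~\ref{general++}\,$\mathbf{[A]}$ with $q=1$ and $\alpha=1$ to the multiplicative function
\begin{equation*}
f(\ell)=\frac{\varphi(\ell)}{\ell^2}\bigg[g_2(\ell)\frac{\ell^\xi}{\varphi_\xi(\ell)}\bigg]^2 .
\end{equation*}
I will then complete the resulting asymptotic with a direct numerical verification for small $X$.

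\textbf{Checking the hypotheses.} For an odd prime $p$ we have $g_2(p)=1$. Hence
\begin{equation*}
f(p)=\frac{p-1}{p^2}\frac{p^{2\xi}}{(p^\xi-1)^2}=\frac1p\Big(1+\frac{2}{p^\xi}+O(p^{-2\xi})\Big)\Big(1-\frac1p\Big)=\frac1p+O\Big(\frac1{p^{1+\xi}}\Big).
\end{equation*}
So $\beta-\alpha=\xi=1-1/\log(10^{12})>1/2$, and part $\mathbf{[A]}$ with $\alpha=1$ applies. At $p=2$, the value is $f(2)=g_2(2)^2\frac{1}{4}\frac{2^{2\xi}}{(2^\xi-1)^2}$, which is exactly the quantity entering $\mathrm{w}_1^1$ through $|2f(2)-1|$. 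The theorem then gives
\begin{equation*}
K_2(X)=H_f(0)\big(\log X+T_f+\gamma\big)+O^*\Big(\frac{\mathrm{w}_1^1\mathrm{P}_1}{\sqrt X}\Big),
\end{equation*}
where $\mathrm{w}_1^1$ is built from $\mathrm{E}_1^{(1)}=1.044$ and $\mathrm{E}_1^{(2)}=0.232$. We therefore take $c_2=H_f(0)=\prod_p\big(1-\frac{1-pf(p)}{p}-f(p)\big)$.

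\textbf{Evaluating the constants.} I will isolate the factor at $p=2$ explicitly and compute the remaining product over odd primes $p\le M=10^8$. For $p>M$ the local factor is $1+v_p/p^{1+\xi}$ with $v_p$ of constant sign and $|v_p|\le 2+M^{\xi-2}$ up to the factor $(1-M^{-\xi})^{-2}$. Applying \eqref{multup} or \eqref{multdown} with $B_{1+\xi}(M)$ from \eqref{Bk_def} then produces the interval $c_2\in(\mathrm{lower},\mathrm{upper})$, with the endpoints differing by the factor $\exp\big((2+M^{\xi-2})(1-M^{-\xi})^{-2}B_{1+\xi}(M)\big)$. In the same way, $\mathrm{P}_1=\prod_p\big(1+|pf(p)-1|/(\sqrt p-1)\big)$ is bounded using a finite computation up to $M$ together with a tail estimate. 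The sum
\begin{equation*}
T_f=\sum_p\frac{(1-(p-2)f(p))\log p}{(f(p)+1)(p-1)}
\end{equation*}
has summands of size about $-2\log p/p^{1+\xi}$ of constant sign for large $p$. I will handle it by computing the part $p\le M$ directly and bounding the tail with \eqref{sumup} or \eqref{sumdown} and $C_{1+\xi}(M)$ from \eqref{Ck_def}.

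\textbf{Deriving the stated bounds.} Combining these estimates gives, for $X\ge N$ (with $N$ of order $10^8$), a bound of the form $|K_2(X)-c_2\log X-c_2(T_f+\gamma)|\le \mathrm{w}_1^1\mathrm{P}_1/\sqrt N$. The additive constants in the statement follow, since $c_2(T_f+\gamma)\pm\mathrm{w}_1^1\mathrm{P}_1/\sqrt N$ fits between the claimed values. For $1\le X\le N$, I will compute $K_2(X)-c_2\log X$ exactly at the integers. The extrema of this quantity over $[n,n+1)$ occur at the endpoints, because $\log X$ is monotone and $K_2$ is constant there. The sign of $c_2$'s uncertainty is handled by using the lower value of $c_2$ for the upper bound and the upper value for the lower bound, or by noting that the interval is very narrow.

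\textbf{Main obstacle.} I expect the main difficulty to be the rigorous control of $T_f$ and $\mathrm{P}_1$ with a precision good enough that the error term $\mathrm{w}_1^1\mathrm{P}_1/\sqrt X$ becomes negligible at a threshold $N$ small enough for the direct computation to be feasible. The exponent $\xi$ is only slightly below $1$, and the factor at $2$, $g_2(2)$, inflates $\mathrm{w}_1^1$. If the analytic bound is too weak at $N=2\cdot10^8$, my first step would be to push the numerical range further rather than sharpen the analytic estimate.
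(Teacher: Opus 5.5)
Your plan follows the paper's proof step by step. You apply Theorem~\ref{general++}\,$\mathbf{[A]}$ with $q=1$ and $(\alpha,\beta)=(1,1+\xi)$, and isolate the factor at $p=2$. You compute products and sums over $3\le p\le M=10^8$ and bound the tails with $B_{1+\xi}(M)$, using exactly the paper's factor $(2+M^{\xi-2})(1-M^{-\xi})^{-2}$, and with $C_{1+\xi}(M)$. You treat $X\ge 2\cdot10^8$ asymptotically, and check $[1,2\cdot 10^8]$ at integers with $c_2$ replaced by its lower or upper bound. One slip: the local factor of $H_f(0)$ is $1-\frac{1-pf(p)}{p}-\frac{f(p)}{p}=(1-\frac1p)(1+f(p))$. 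With $-f(p)$, as you wrote it, the factor collapses to $1-\frac1p$ and the product vanishes.

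The real problem is your concluding step. You claim that $c_2(T_f+\gamma)\pm\mathrm{w}_1^1\mathrm{P}_1/\sqrt N$ ``fits between the claimed values'' and that the small-$X$ check succeeds. With the constants $0.5055$ and $1.2943$ printed in the statement, both fail, and no sharper analysis can rescue them.
\begin{itemize}
\item Upper bound at $X=2$: since $g_2(2)2^\xi/\varphi_\xi(2)=2.9506$, you get $K_2(2)=1+2.9506^2/4\approx3.18$. But $c_2\log2+1.2943\approx2.43$ for $c_2\approx1.6385$.
\item Lower bound near $X=2$: $K_2(X)=1<c_2\log X+0.5055$ for $X\in(1.36,2)$.
\item Large $X$: your own main term gives $K_2(X)-c_2\log X\to c_2(T_f+\gamma)\approx1.6385\times1.1566\approx1.895>1.2943$. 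The error term is only about $10^{-3}$ at $X=2\cdot10^8$, so your ``main obstacle'' is not where the difficulty lies.
\end{itemize}

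The printed constants are literally those of Lemma~\ref{auxf2}. They were computed for $K_1$, where $f(2)\approx0.74$, and not for $K_2$, where $f(2)\approx2.18$. The paper's proof carries the same defect:
\begin{itemize}
\item its own interval for $X\ge2\cdot10^8$ is centred near $1.895$;
\item its claimed maximum $K_2(3)-c_2\log3$ is about $1.90$;
\item the true infimum, $1-c_2\log2\approx-0.14$ (approached as $X\to2^-$), is negative.
\end{itemize}
Carried out honestly, your argument proves the lemma with additive constants of roughly $-0.14$ and $2.16$. A quick computation suggests the supremum is at $X=6$, where $K_2(6)-c_2\log6\approx2.15$. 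These constants enter Corollary~\ref{auxnu} only through terms divided by $(12\log10)^2$, so the change there is of order $10^{-3}$.
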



 \begin{cor}\itshape
  \label{auxnu}
  When $X\ge10^{12}$, we have
  \begin{equation*}
     L_2(X)=\sum_{\ell\le X/10^{12}}
     \frac{\mu^2(\ell)\varphi(\ell)}{\ell^2\log^2
       (X/\ell)}\bigg[g_2(\ell)\frac{\ell^\xi}{\varphi_{\xi}(\ell)}\bigg]^2
     \le \sage{Upper(loglemma2,digits)}.
  \end{equation*}
\end{cor}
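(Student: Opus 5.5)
We derive Corollary~\ref{auxnu} from Lemma~\ref{auxf22} by partial summation. Write
\[
a_\ell=\frac{\mu^2(\ell)\varphi(\ell)}{\ell^2}\Bigl[g_2(\ell)\frac{\ell^\xi}{\varphi_\xi(\ell)}\Bigr]^2\ge0,
\]
so that $K_2(t)=\sum_{\ell\le t}a_\ell$. Set $Y=X/10^{12}\ge1$ and $h(t)=1/\log^2(X/t)$ on $[1,Y]$. The function $h$ is positive and increasing, with $h(Y)=1/(12\log 10)^2$ and $h'(t)=2/\bigl(t\log^3(X/t)\bigr)$. Stieltjes integration gives
\[
L_2(X)=K_2(Y)h(Y)-\int_1^Y K_2(t)\,h'(t)\,dt .
\]
Since $h'\ge0$, every occurrence of $K_2$ in this formula can be bounded by the upper estimate $K_2(t)\le c_2\log t+C$ of Lemma~\ref{auxf22}, where $C$ is the constant denoted log2mx. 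Because the signs are mixed, it is cleaner to rewrite this as
\[
L_2(X)=\int_{1^-}^{Y}h\,dK_2 .
\]
We then compare $K_2$ with its smooth main term $c_2\log t$ and put the remainder $R(t)=K_2(t)-c_2\log t$ into the boundary term.

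Concretely,
\[
L_2(X)=c_2\int_1^Y\frac{dt}{t\log^2(X/t)}+\int_{1^-}^{Y}h\,dR .
\]
For the first integral, substitute $u=\log(X/t)$. It runs from $u=\log(10^{12})$ to $u=\log X$, and equals
\[
c_2\int_{12\log10}^{\log X}\frac{du}{u^2}\le\frac{c_2}{12\log 10}.
\]
This gives the term uc2$/(12\log10)$ in the stated constant, using the upper bound $c_2\le$ uc2.

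For the second term, integrate by parts again:
\[
\int_{1^-}^{Y}h\,dR=R(Y)h(Y)-R(1^-)h(1)-\int_1^Y R(t)h'(t)\,dt .
\]
Here $R(1^-)=0$, and $R(Y)\le C$ by Lemma~\ref{auxf22}. For the integral we need a lower bound on $R$. Lemma~\ref{auxf22} also gives $R(t)\ge$ log2mm $>0$ for $t\ge1$, so $-\int R h'\le0$. Hence the second term is at most $C\,h(Y)=C/(12\log10)^2$.

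Adding the two pieces gives
\[
L_2(X)\le\frac{c_2}{12\log10}+\frac{C}{(12\log 10)^2},
\]
which is exactly the quantity \texttt{loglemma2} evaluated in the Sage code.

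The main point requiring care is the bookkeeping of the jump at $t=1$ and of signs. In particular, one has to check that the lower bound of Lemma~\ref{auxf22} is indeed nonnegative on $[1,Y]$, which is what justifies dropping the integral term. If it were negative, one would instead bound $-\int_1^Y Rh'$ by
\[
|\log2mm|\,\bigl(h(Y)-h(1)\bigr),
\]
which is still $O\bigl(1/(12\log 10)^2\bigr)$. The case $Y<1$, where the sum is empty, is trivial.
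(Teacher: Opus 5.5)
Your argument is correct and is essentially the paper's proof: both start from $L_2(X)=K_2(Y)h(Y)-\int_1^Y K_2h'$, then insert the upper bound of Lemma~\ref{auxf22} at $t=Y$ and its lower bound inside the integral. Subtracting $c_2\log t$ first just repackages the paper's explicit $\int v^{-3}\,dv$ computation, and discarding $-\int_1^Y Rh'$ because the lower constant is positive is exactly the paper's step of dropping that constant times $1/\log^2X-1/(12\log 10)^2\le 0$.

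One caveat applies to the paper equally. We have $K_2=1$ on $[1,2)$ and $K_2(2)=1+2.9506^2/4\approx 3.18$, while $c_2\log 2\approx 1.136$, so the constants $0.5055$ and $1.2943$ printed in Lemma~\ref{auxf22} (which coincide with those of Lemma~\ref{auxf2}) cannot be valid: the lower one must be below $-0.13$ and the upper one above $2.04$. Hence the positivity you rely on actually fails, your fallback estimate is the one that applies, and the final numerical constant has to be recomputed.
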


\section{Proof of Theorem~\ref{Main}}\label{S_Main}

\noindent\phantom{x}\textbf{Direct computations.}  For reasonably
small values of $X$, we carry out direct computations for the
evaluation of $S_0(X)$ via the following algorithm. 

When $X$ is an integer, say $\ell$, we find that
\begin{equation*}
  S_0(\ell)-S_0(\ell-1)
  =\frac{\mu^2(\ell)}{\ell}
  +2\mu(\ell)\sum_{m < \ell}\frac{\mu(m)}{[\ell,m]}.
\end{equation*}
This yields a formula to find the maximum of $S_0(\ell)$ over some
range, but each step is costly. We instead continue with
\begin{align*}
  S_0(\ell)-S_(\ell-1)
  &=\frac{\mu^2(\ell)}{\ell}
  +2\frac{\mu(\ell)}{\ell}\sum_{m < \ell}(\ell,m)\frac{\mu(m)}{m}.
  \\&=\frac{\mu^2(\ell)}{\ell}
    +2\frac{\mu(\ell)}{\ell}\sum_{\delta |\ell}\mu(\delta)
    \sum_{\substack{m < \ell/\delta \\ (\ell,m)=1}}\frac{\mu(m)}{m}.
\end{align*}
By using Lemma \ref{coprime},
\begin{equation*}
  \sum_{\substack{m < \ell/\delta \\ (\ell,m)=1}}\frac{\mu(m)}{m}
  =\sum_{k|\ell^\infty}\frac{1}{k}\sum_{\substack{m <\ell /(\delta k)}}\frac{\mu(m)}{m},
\end{equation*}
to obtain
\begin{equation*}
  S_0(\ell)-S_0(\ell-1)
  =
  \frac{\mu^2(\ell)}{\ell}
  +2\frac{\mu(\ell)}{\ell}\sum_{\substack{\delta |\ell\\ k|\ell^\infty}}\frac{\mu(\delta)}{k}
    m\bigg(\frac{\ell-1}{\delta k}\bigg).
\end{equation*}
We merge now the variables $\delta$ and $k$ in $n=\delta k$. We
have $n|\ell^\infty$ and 
\begin{equation*}
  \sum_{\delta k=n}\frac{\mu(\delta)}{k}=
  \frac{1}{n}\prod_{p|n}(1-p)=\frac{(-1)^{\omega(n)}\varphi(n)}{n}.
\end{equation*}
Hence, we obtain the more computationally efficient identity
\begin{equation}
  \label{eq:3}
  S_0(\ell)-S_0(\ell-1)
  =
  \frac{\mu^2(\ell)}{\ell}
  +2\frac{\mu(\ell)}{\ell}\sum_{\substack{n|\ell^\infty}}\frac{(-1)^{\omega(n)}\varphi(n)}{n^2}
    m\bigg(\frac{\ell-1}{n}\bigg).
\end{equation}

\begin{proofbold}[Theorem \ref{Main} - Part 1] Identity \eqref{eq:3} entails to precompute all the values $m(t)$ for $t$ up to a threshold that produces a large array. Observe that, for any $q_0\in\mathbb{N}$,
\begin{equation}\label{eqqq}
  m(t)=\sum_{\ell\le t}\frac{\mu(\ell)}{\ell}
  =\sum_{q|q_0}\frac{\mu(q)}{q}m_{q_0}\bigg(\frac{t}{q}\bigg)
  =\sum_{q|q_0}\frac{\mu(q)}{q}
  \sum_{\substack{0\le a_0<q_0\\(a_0,q_0)=1}}m\bigg(\frac{t}{q}; a_0,q_0\bigg)
\end{equation}
 where
\begin{align*}
 m(t';a_0,q_0)=\sum_{\substack{\ell\le t'\\ \ell\equiv a_0\mod q_0}}\frac{\mu(\ell)}{\ell}.
\end{align*}
Thus, for the sake of efficiency, for a particular $q_0\in\mathbb{N}$,
we store only the values of $m(t';a_0,q_0)$  
for $0\leq a_0<q_0$ and $(a_0,q_0)=1$.
 
 We chose $q_0=6$. This reduced
sizeably the amount of storage while still accessing to the values $m(t)$ via \eqref{eqqq}.

  Significant outcomes of our calculations are the following. When $422\le X\le 10^8$, 
  \begin{equation*}
    0\le S_0(X)\le 0.445.
  \end{equation*} 
  On $[6, 10^8]$, $S_0(X)$ is bounded above by $0.528$.
  On $[2, 10^8$, we have
  $S_0(X)\leq 19/30=0.633...$.
  When $X\ge 1000$, this sum remained $\ge 0.437$. A value larger than $0.44455$ is reached around $X=1321$.

\end{proofbold}


\noindent\phantom{x}\textbf{A bound for the tail.}

\begin{lem}
  \label{Tail}
  When $X\ge D>0$, we have
  \begin{equation*}
    S_0^{**}(X,D)=\sum_{ \ell\le D}\frac{\mu^2(\ell)\varphi(\ell)}{\ell^2}
    m_\ell\bigg(\frac{X}{\ell}\bigg)^2
    \le  
    \sage{Upper(2*TH1,digits)}\cdot\frac{D}{X}
    +\sage{Upper(2*sqrt(2)*H1*L1,digits)}\cdot\sqrt{\frac{D}{X}}
    +\sage{Upper(H1^2*L2,digits)}.
  \end{equation*}
\end{lem}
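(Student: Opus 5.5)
We apply Lemma~\ref{m4} with $q=\ell$ and $X$ replaced by $X/\ell$. This gives
\begin{equation*}
  \Bigl|m_\ell\Bigl(\frac{X}{\ell}\Bigr)\Bigr|\le A_\ell+B_\ell,\qquad
  A_\ell=\frac{g_0(\ell)\sqrt{\ell}}{\varphi_{\frac12}(\ell)}\sqrt{\frac{2\ell}{X}},\qquad
  B_\ell=\sage{Trunc(H1,6)}\cdot\frac{g_2(\ell)\ell^\xi}{\varphi_\xi(\ell)}\frac{\1_{X/\ell\ge 10^{12}}}{\log(X/\ell)}.
\end{equation*}
We then expand $(A_\ell+B_\ell)^2=A_\ell^2+2A_\ell B_\ell+B_\ell^2$, weight by $\mu^2(\ell)\varphi(\ell)/\ell^2$, sum over $\ell\le D$, and treat the three resulting sums one at a time. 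The indicator forces $\ell\le X/10^{12}$ in every term that contains $B_\ell$, which is exactly the summation range $\ell\le\min\{D,X/10^{12}\}$ used in Corollaries~\ref{Cor1} and~\ref{Cor2}.

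\textbf{Square term.} It equals
\begin{equation*}
  \frac{2}{X}\sum_{\ell\le D}\frac{\mu^2(\ell)\varphi(\ell)g_0(\ell)^2}{\varphi_{\frac12}(\ell)^2}=\frac{2}{X}\Sigma_1(D),
\end{equation*}
and Lemma~\ref{Aux1}(iii) bounds it by $2\cdot\sage{Upper(TH1,digits)}\,D/X$. This is the first term of the statement.

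\textbf{Cross term.} It equals $2\sqrt{2/X}\cdot\sage{Trunc(H1,6)}$ times
\begin{equation*}
  \sum_{\ell\le\min\{D,X/10^{12}\}}\frac{\mu^2(\ell)\varphi(\ell)}{\ell^{3/2}\log(X/\ell)}\Bigl[\frac{g_0(\ell)\sqrt\ell}{\varphi_{\frac12}(\ell)}\Bigr]\Bigl[\frac{g_2(\ell)\ell^\xi}{\varphi_\xi(\ell)}\Bigr]=\Sigma_2(X),
\end{equation*}
since $\ell^{1/2}/\ell^2=\ell^{-3/2}$. Corollary~\ref{Cor1} gives $\Sigma_2(X)\le \sage{Upper(L1,digits)}\sqrt D$. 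The cross term is therefore at most $2\sqrt2\,\sage{Trunc(H1,6)}\cdot\sage{Upper(L1,digits)}\sqrt{D/X}$, which is the second term.

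\textbf{Last term.} It equals $\sage{Trunc(H1,6)}^2\,\Sigma_3(X)$, and Corollary~\ref{Cor2} bounds this by $\sage{Trunc(H1,6)}^2\cdot \sage{Upper(L2,digits)}$, the constant term.

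If $X<10^{12}$, the $B$ contributions vanish and only the square term survives. The hypothesis $X\ge D$ is what allows Corollaries~\ref{Cor1} and~\ref{Cor2} to be applied.

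The only delicate point is the bookkeeping of powers of $\ell$ when matching each piece to the auxiliary sums. For $A_\ell^2$, the factor $\ell^2$ from $(\sqrt\ell\cdot\sqrt\ell)^2$ cancels the $\ell^{-2}$ weight. The cross term is the one to check most carefully, because its powers must combine to exactly $\ell^{-3/2}$. The substantive difficulty (the Euler-product evaluations) is already contained in Lemma~\ref{Aux1} and the two corollaries, so the proof reduces to this direct expansion.
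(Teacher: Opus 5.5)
Your proposal is correct and follows the paper's proof: you bound $|m_\ell(X/\ell)|$ by Lemma~\ref{m4}, expand the square into $\frac{2}{X}\Sigma_1(D)+\frac{2\sqrt{2}\cdot 0.010032}{\sqrt{X}}\Sigma_2(X)+0.010032^2\,\Sigma_3(X)$, and conclude with Lemma~\ref{Aux1}(iii) and Corollaries~\ref{Cor1} and~\ref{Cor2}. Your bookkeeping is also right, both for the powers of $\ell$ and for the indicator that restricts the $B_\ell$-terms to $\ell\le\min\{D,X/10^{12}\}$.
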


\begin{proof}
  By Lemma \ref{m4} and expanding out the square, we readily find that
  \begin{align*}
    &\sum_{ \ell\le D}\frac{\mu^2(\ell)\varphi(\ell)}{\ell^2}
    \bigg(\frac{g_0(\ell)\sqrt{\ell}}{\varphi_{\frac{1}{2}}(\ell)}\sqrt{\frac{2\ell}{X}}
    +\sage{Trunc(H1,6)}\cdot\frac{g_2(\ell)\ell^\xi}{\varphi_{\xi}(\ell)}\frac{\1_{X/\ell\ge 10^{12}}}{\log(X/\ell)}\bigg)^2\\
    &\leq 
    \frac{2}{X}\Sigma_1(D)
    +\frac{2\sqrt{2}\times\sage{Trunc(H1,6)}}{\sqrt{X}}\Sigma_2(X)
    +\sage{Trunc(H1,6)}^2\cdot\Sigma_3(X),
    \end{align*}
  where $\Sigma_1(X)$, $\Sigma_2(X)$ and $\Sigma_3(X)$ are defined in Lemma \ref{Aux1} and corollaries \ref{Cor1} and~\ref{Cor2}, respectively. By recalling those results, we have
   \begin{align*}
    &\sum_{ \ell\le D}\frac{\mu^2(\ell)\varphi(\ell)}{\ell^2}
    \bigg(\frac{g_0(\ell)\sqrt{\ell}}{\varphi_{\frac{1}{2}}(\ell)}\sqrt{\frac{2\ell}{X}}
    +\sage{Trunc(H1,6)}\cdot\frac{g_2(\ell)\ell^\xi}{\varphi_{\xi}(\ell)}\frac{\1_{X/\ell\ge 10^{12}}}{\log(X/\ell)}\bigg)^2\\
    &\leq 
    \frac{2\times\sage{Upper(TH1,digits)}}{X}\ D
    +\frac{2\sqrt{2}\times\sage{Trunc(H1,6)}\times\sage{Upper(L1,digits)}}{\sqrt{X}}\sqrt{D}
    +\sage{Trunc(H1,6)}^2\times  \sage{Upper(L2,digits)},
    \end{align*}
    which, upon calculation, gives the result.
\end{proof}


\noindent\phantom{x}\textbf{A special function.}
To prove Theorem \ref{Main}, we will need to handle the following function with coprimality conditions
 \begin{align*}
     \mathrm{G}_q(X)=\sum_{\substack{\ell\le X\\
      (\ell,q)=1}}\frac{\mu^2(\ell)\varphi(\ell)}{\ell^2} .
      \end{align*}
 \begin{lem} \label{GetGq}
  For every $X>0$, 
  \begin{align*}
     \mathrm{G}_q(X)=\begin{cases}&
    \mathrm{W}\ \mathrm{r}_q\bigl(\log X+\mathrm{s}_q\bigr)
    +O^*(\sage{Upper(G1,digits)}\cdot\mathrm{t}_q/\sqrt{X}),\quad\text{ if }(2,q)=1,\\
    &
    \mathrm{W}\ \mathrm{r}_q\bigl(\log X+\mathrm{s}_q\bigr)
    +O^*(\sage{Upper(G2,digits)}\cdot \mathrm{t}_q/\sqrt{X}),\quad\text{ if }2|q,
    \end{cases}
  \end{align*}
  where
  \begin{align*}
 \mathrm{W}=\prod_{p}\bigg(1-\frac{2}{p^2}+\frac{1}{p^3}\bigg)\in(\sage{Lower(APROD,digits)},\sage{Upper(APROD,digits)}),\phantom{xxxxxxxxxxxxxxxx}&\\
  \mathrm{r}_q=\prod_{p|q}\bigg(\frac{p^2}{p^2+p-1}\bigg),
    \quad
\mathrm{t}_q=\prod_{p|q}\bigg(\frac{p^{3/2}}{p^{3/2}-p+1}\bigg),\phantom{xxxxxxxxxxxx}&\\
  \mathrm{c}_q=\sum_{p|q}\frac{(p-1)\log p}{p^2+p-1},\ \ 
    \mathrm{s}_q=\mathrm{c}_q+\gamma
    +\sum_{p}\frac{(3p-2)\log p}{(p-1)(p^2+p-1)}\in\mathrm{c}_q+(\sage{Lower(ASUM,digits)},\sage{Upper(ASUM,digits)}).&
  \end{align*}
\end{lem}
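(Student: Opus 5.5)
We will obtain this from Theorem~\ref{general++}\,[A] with $\alpha=1$ applied to $f(\ell)=\varphi(\ell)/\ell^2$, so that $\mu^2(\ell)f(\ell)$ is the summand of $\mathrm{G}_q(X)$. For a prime $p$ we have $f(p)=(p-1)/p^2=1/p-1/p^2$, hence $\alpha=1$, $\beta=2$ and $\beta-\alpha=1>1/2$. The theorem then applies to every $p\nmid q$.

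\textbf{Main term.} We have $pf(p)-1=-1/p$. With $H_f^q(s)=\prod_{p\nmid q}\bigl(1-\frac{1-pf(p)}{p^{s+1}}-\frac{pf(p)}{p^{2s+2}}\bigr)$, at $s=0$ the factor is
\[
1-\frac{1}{p^2}-\frac{p-1}{p^3}=1-\frac{2}{p^2}+\frac{1}{p^3}.
\]
So $H_f^q(0)=\mathrm{W}\prod_{p|q}(1-2/p^2+1/p^3)^{-1}$. Multiplying by $\varphi(q)/q$ and using
\[
\Bigl(1-\frac1p\Bigr)\Big/\Bigl(1-\frac{2}{p^2}+\frac{1}{p^3}\Bigr)=\frac{p^2(p-1)}{(p-1)(p^2+p-1)}
\]
gives exactly $\mathrm{W}\,\mathrm{r}_q$. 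Next, $f(p)+1=(p^2+p-1)/p^2$ and $1-(p-2)f(p)=(3p-2)/p^2$. This gives $T_f^q=\sum_{p\nmid q}\frac{(3p-2)\log p}{(p-1)(p^2+p-1)}$. Adding $\gamma+\sum_{p|q}\frac{\log p}{p-1}$ and rewriting the sum over all $p$ means that for $p|q$ we must add $\frac{\log p}{p-1}-\frac{(3p-2)\log p}{(p-1)(p^2+p-1)}=\frac{(p^2-2p+1)\log p}{(p-1)(p^2+p-1)}=\frac{(p-1)\log p}{p^2+p-1}$. That is precisely $\mathrm{c}_q$, so $F_1^q(X)=\mathrm{W}\,\mathrm{r}_q(\log X+\mathrm{s}_q)$.

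\textbf{Error term.} Since $|pf(p)-1|=1/p$, we get
\[
\mathrm{t}_1(q)=\prod_{p|q}\Bigl(1+\frac{1-1/p}{\sqrt p-1+1/p}\Bigr)=\prod_{p|q}\frac{p^{3/2}}{p^{3/2}-p+1}=\mathrm{t}_q,
\]
and $\mathrm{P}_1=\prod_p(1+\frac{1}{p(\sqrt p-1)})$. The weight $\mathrm{w}_1^q$ equals $\mathrm{E}_1^{(2)}=0.232$ if $2|q$. If $2\nmid q$, then $|2f(2)-1|=1/2$, so $\mathrm{w}_1^q$ is the expression $\mathrm{W}_2$ coded with $f_2=1/4$, i.e. $\frac{\sqrt2-1}{\sqrt2-1+1/2}\bigl(1.044+\frac{0.232/2}{\sqrt2-1}\bigr)$. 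The constants $G_1,G_2$ are therefore $\mathrm{w}_1^q\mathrm{P}_1$ in the two parity cases.

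\textbf{Numerics.} It remains to bound the constants rigorously, using the prime-product machinery \eqref{multup}, \eqref{multdown}, \eqref{sumup} with $M=10^8$. For $\mathrm{W}$, the factor $1-2/p^2+1/p^3$ has $v_p<0$, so we use \eqref{multdown} with $\kappa=2$, $b_M=2$. For $\mathrm{P}_1$ we write $1/(p(\sqrt p-1))=p^{-3/2}(1-p^{-1/2})^{-1}$, so $b_M=(1-M^{-1/2})^{-1}$ and $\kappa=3/2$ in \eqref{multup}. For the sum in $\mathrm{s}_q$ we use \eqref{sumup} with $\kappa=2$, after bounding $u_p=p^2(3p-2)/((p-1)(p^2+p-1))\le 3+O(1/p)$ for $p>M$. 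The main obstacle is only this certified computation: evaluating the finite products and sums over $p\le10^8$ in interval arithmetic, with tail constants $B_\kappa(M)$ and $C_\kappa(M)$. The algebraic identification above is routine.
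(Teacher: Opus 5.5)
Your proposal is correct and follows essentially the paper's own proof: you apply Theorem~\ref{general++}\,$\mathbf{[A]}$ with $(\alpha,\beta)=(1,2)$ and identify $\mathrm{W}\,\mathrm{r}_q$, $\mathrm{c}_q$, $\mathrm{t}_q$, $\mathrm{w}_1^q$ and $\mathrm{P}=\prod_p\bigl(1+\frac{1}{p^{3/2}-p}\bigr)$ in the same way, then certify the constants with \eqref{multdown}, \eqref{multup} and \eqref{sumup} at $M=10^8$. One detail needs tightening: \eqref{sumup} requires an explicit $c_M$ rather than $3+O(1/p)$, and $3/((1-M^{-1})(1-M^{-2}))$ works, as in the paper (in fact $u_p<3$ for $p\ge 3$); your parity assignment, with $\mathrm{E}_1^{(2)}$ for $2|q$ and the $f(2)=1/4$ weight for $2\nmid q$, is the one that matches the statement.
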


\begin{proof} Note that, for any sufficiently large prime $p$,
\begin{equation*} 
f(p)=\frac{\varphi(p)}{p^2}=\frac{1}{p}+O\bigg(\frac{1}{p^2}\bigg).
\end{equation*}

 Thus a direct application of Theorem \ref{general++}$\mathbf{[A]}$ with $(\alpha,\beta)=(1,2)$ and gives 
  \begin{equation*}
 \mathrm{G}_q(X)=\mathrm{F}_q(X)+O^*\bigg(\frac{\mathrm{w}_q\ \mathrm{P}\cdot\mathrm{t}_q}{\sqrt{X}}\bigg)
 \end{equation*}
 with
 \begin{equation*}
 F_1^q(X)=\mathrm{W}\ \mathrm{r}_q\bigg(\log X+\mathrm{c}_q+\gamma+\sum_{p}\frac{(3p-2)\log p}{(p-1)(p^2+p-1)}\bigg),
 \end{equation*}
  where $\mathrm{W}$, $\mathrm{r}_q$, $\mathrm{c}_q$ and $\mathrm{t}_q$ are defined as in the statement and 
  \begin{align*}
  \mathrm{w}_q&=\prod_{2|q}\sage{Trunc(E2,digits)}\ \prod_{2\nmid q}\sage{Upper(W2,digits)}, \\
 \mathrm{W}=\prod_{p}\bigg(1-\frac{2}{p^2}+\frac{1}{p^3}\bigg)&\in
\prod_{p\leq M}\bigg(1-\frac{2}{p^2}+\frac{1}{p^3}\bigg)\times\bigg[\exp\bigg(-\frac{2\cdot B_2(M)}{1-2/M^2}\bigg),1\bigg]\\
&\ \subset(\sage{Lower(APROD*exp(-2*B(2,M)/(1-2/M^2)),digits)},\sage{Upper(APROD,digits)}),\\
\gamma+\sum_{p}\frac{(3p-2)\log p}{(p-1)(p^2+p-1)}\\
\in\gamma+\sum_{p\leq M}&\frac{(3p-2)\log p}{(p-1)(p^2+p-1)}
+\bigg[0,\frac{3\cdot C_2(M)}{(1-M^{-1})(1-M^{-2})}\bigg]\\
&\ \subset(\sage{Lower(ASUM,digits)},\sage{Upper(ASUM+3*C(2,M)/(1-M^(-1))/(1-M^(-2)),digits)}),\\
\mathrm{P}=\prod_{p}\bigg(1+\frac{1}{p^{3/2}-p}\bigg)&\in\prod_{p\leq M}\bigg(1+\frac{1}{p^{3/2}-p}\bigg)\times\bigg[1,\exp\bigg(\frac{B_{3/2}(M)}{1-M^{-1/2}}\bigg)\bigg] \\
&\ \subset(\sage{Lower(PPdelta,digits)},\sage{Upper(APROD_delta,digits)}).
\end{align*}     
Note that
 \begin{equation*}\mathrm{w}_q\mathrm{P}\leq\prod_{2|q}\sage{Upper(G1,digits)}\ \prod_{2\nmid q}\sage{Upper(G2,digits)}
 \end{equation*}
 defining the numerical value of the error term in the statement.
 \end{proof}
 
\begin{proofbold}[Theorem \ref{Main} - Part 2]
The lower bound follows from the following interpretation of the constant. By the Parseval identity for Dirichlet series, when $\Re s=1+\ve/2>1$,
\begin{align*}
0\leq  \lim_{T\rightarrow\infty}
  \frac{1}{4i\pi T}\int_{-T}^T
  \biggl|\zeta(s)\sum_{d\le X}\frac{\mu(d)}{d^s}\biggr|^2&ds
  =\sum_{n\ge 1}\frac{1}{n^{1+\ve}}
  \biggl(\sum_{\substack{d|n\\ d\le X}}\mu(d)\biggr)^2
  \\=\ &\zeta(1+\ve)\sum_{\substack{d_1,d_2\le X}}
  \frac{\mu(d_1)\mu(d_2)}{[d_1,d_2]^{1+\ve}}=\zeta(1+\ve)S_\ve(X).
\end{align*}
Thus, since $\zeta(1+\ve)>0$ (see Lemma \ref{zeta}), we derive $S_\ve(X)\geq 0$ for all $\ve>0$ and then a continuity argument proves that $S_0(X)\geq 0$. 

Alternatively, we readily find that
  \begin{align*}
    S_0(X)=\sum_{\ell\le X}\frac{\mu^2(\ell)\varphi(\ell)}{\ell^2}
    \biggl(\sum_{\substack{m\le X/\ell\\ (\ell,m)=1}}\frac{\mu(m)}{m}\biggr)^2,
  \end{align*}
  which also proves that $S_0(X)\geq 0$. Moreover, for a parameter $D$ to be chosen later, we may write 
  \begin{equation}\label{split}
    S_0(X)=S_0^*(X,D)+S_0^{**}(X,D),
    \end{equation}
  where
   \begin{align*}
    S_0^*(X,D)
    = \sum_{D< \ell\le X}\frac{\mu^2(\ell)\varphi(\ell)}{\ell^2}m_\ell\bigg(\frac{X}{\ell}\bigg)^2,
  \end{align*}
  and $S_0^{**}(X,D)$ is defined in Lemma \ref{Tail}.
  
 Set $\Pi(j)=\prod_{p\le j}p$ and write
  \begin{align}\label{S0*}
    S_0^*(X,D)
    &=
    \sum_{j\leq \frac{X}{D}}\sum_{\max(D,\frac{X}{j+1})< \ell\le \frac{X}{j}}\frac{\mu^2(\ell)\varphi(\ell)}{\ell^2}m_\ell(j)^2\nonumber\\
    =&  \sum_{\max(D,\frac{X}{2})< \ell\le X}\frac{\mu^2(\ell)\varphi(\ell)}{\ell^2}
+    \sum_{1<j\leq \frac{X}{D}}\sum_{\max(D,\frac{X}{j+1})< \ell\le \frac{X}{j}}\frac{\mu^2(\ell)\varphi(\ell)}{\ell^2}m_\ell(j)^2
    \nonumber\\
    =&
    \sum_{\max(D,\frac{X}{2})< \ell\le X}\frac{\mu^2(\ell)\varphi(\ell)}{\ell^2}
+ \sum_{1<j\leq\frac{X}{D}}\sum_{n|\Pi(j)}
    m_n(j)^2
    \sum_{\substack{\max(D,\frac{X}{j+1})< \ell\le\frac{X}{j}\\ (\ell,\Pi(j))=n}}\frac{\mu^2(\ell)\varphi(\ell)}{\ell^2}
    \nonumber\\\le&
     \sum_{\frac{X}{2}< \ell\le X}\frac{\mu^2(\ell)\varphi(\ell)}{\ell^2}
+ S_0^{***}(X,D)
\end{align}
where
  \begin{align*}S_0^{***}(X,D)&=
    \sum_{1<j\le \frac{X}{D}}\sum_{\substack{n|\Pi(j)\\n\leq\frac{X}{j}}}
    \frac{\mu^2(n)\varphi(n)}{n^2}m_n(j)^2
    \sum_{\substack{\frac{X}{(j+1)n}< \ell\le \frac{X}{jn}
    \\ (\ell,\Pi(j))=1}}\frac{\mu^2(\ell)\varphi(\ell)}{\ell^2}.
  \end{align*}
  A direct usage of Lemma \ref{GetGq} with $q=1$ gives
    \begin{align*}
\sum_{\frac{X}{2}< \ell\le X}\frac{\mu^2(\ell)\varphi(\ell)}{\ell^2}= \mathrm{W}\cdot\log 2
    +O^*\bigg(\frac{\sage{Upper(G1,digits)}\times (1+\sqrt{2})}{\sqrt{X}}\bigg) = \mathrm{W}\cdot\log 2
    +O^*\bigg(\frac{\sage{Upper(G1*(1+sqrt(2)),digits)}}{\sqrt{X}}\bigg).
  \end{align*}
 Note that, unless $j=1$, we always have $2|\Pi(j)$. Using again Lemma \ref{GetGq} with $q=\Pi(j)$ and $2|q$, we have
  \begin{align*}S_0^{***}(X,D)=\mathrm{W}\ S_{1}^{(1)}(X,D)+O^*\bigg(\frac{\sage{Upper(G2,digits)}\cdot S_{1}^{(2)}(X,D)}{\sqrt{X}}\bigg),
  \end{align*}
  and we conclude from Lemma \ref{Tail} and equation \eqref{S0*} that
    \begin{align}\label{S0*_final}
    S_0^*(X,D)\le\mathrm{W}\ (S_{1}^{(1)}(X,D)+\log 2)+O^*\bigg(\frac{\sage{Upper(G2,digits)}\cdot S_{1}^{(2)}(X,D)+\sage{Upper(G1*(1+sqrt(2)),digits)}}{\sqrt{X}}\bigg),
\end{align}
  where
  \begin{align*}
  S_{1}^{(1)}(X,D)&=\sum_{1<j\le \frac{X}{D}}\mathrm{r}_{\Pi(j)}\log\bigg(\frac{j+1}{j}\bigg) \sum_{\substack{n|\Pi(j)\\n\leq\frac{X}{j}}}
    \frac{\mu^2(n)\varphi(n)}{n^2}m_n(j)^2\\
     S_{1}^{(2)}(X,D)&=\sum_{1<j\le \frac{X}{D}}\mathrm{t}_{\Pi(j)}(\sqrt{j+1}+\sqrt{j})\sum_{\substack{n|\Pi(j)\\ n\leq \frac{X}{j}}}
    \frac{\mu^2(n)\varphi(n)}{n^{3/2}}m_n(j)^2
  \end{align*}
 Since $\Pi(80)\times 80\in(10^{32},10^{33})$, when $X\geq 10^{33}$ and $j\leq 80$, the innermost condition $\{n|\Pi(j),n\leq X/j\}$ of the two above innermost sums becomes $\{n|\Pi(j)\}$.
Therefore, for any $X\geq 10^{33}$,

 \begin{align*}
        S_{1}^{(1)}(X,X/80.9999)&\leq\sum_{1<j\le 80}\mathrm{r}_{\Pi(j)}\log\bigg(\frac{j+1}{j}\bigg)\sum_{n|\Pi(j)}
\frac{\mu^2(n)\varphi(n)}{n^2}m_n(j)^2\\
&\qquad\in(\sage{Lower(S11_80,digits)},\sage{Upper(S11_80,digits)}),\\
        S_{1}^{(2)}(X,X/80.9999)&\leq\sum_{1<j\le 80}\mathrm{t}_{\Pi(j)}(\sqrt{j+1}+\sqrt{j})\sum_{\substack{n|\Pi(j)}}
    \frac{\mu^2(n)\varphi(n)}{n^{3/2}}m_n(j)^2
    \\&\qquad\in(\sage{Lower(S12_80,digits)},\sage{Upper(S12_80,digits)}).
  \end{align*}
  Thus, for $X\geq 10^{33}$ and $X/D=80.9999$, we conclude from \eqref{split}, Lemma \ref{Tail} and \eqref{S0*_final} that 
   \begin{align*}S_0(X)&\leq\frac{\sage{Upper(2*TH1,digits)}}{80.9999}
    +\frac{\sage{Upper(2*sqrt(2)*H1*L1,digits)}}{\sqrt{80.9999}}
    +\sage{Upper(H1^2*L2,digits)}\\&\qquad+
    \sage{Upper(APROD,digits)}\times (\sage{Upper(S11_80,digits)}+\log 2)+\frac{\sage{Upper(G2,digits)}\times \sage{Upper(S12_80,digits)}+\sage{Upper(G1*(1+sqrt(2)),digits)}}{\sqrt{10^{33}}}\\
    &\leq \sage{Upper(Thresh33_1+Thresh33_2,digits)}.
 \end{align*} 
 It remains to study the case $X\in[10^8,10^{33}]$. We will study intervals $[10^u,10^v]$ with $8\leq u<v\leq 33$. Let $\eta>1$ a fixed constant such that $X\in[\eta^{k}10^u,\eta^{k+1}10^u)$, $\eta^{k}10^u\leq 10^{v}$ for $k\geq 1$ and $u<v\leq 33$. Additionally, suppose that $X/D=J+0.9999$ where $[X/D]=J\leq 80$. Then 
 \begin{equation*}\max_{10^u\leq X<10^{v}}S_0(X)\leq \min_{2\leq J\leq 80}\ \max_{0\leq k\leq[\frac{(v-u)\log 10}{\log \eta}]}U(\eta;u,v;k,J),
 \end{equation*}
  where
 \begin{align*}
 U(\eta;u,v;k,J) =& 
    \frac{\sage{Upper(2*TH1,digits)}}{J+0.9999}
    +\frac{\sage{Upper(2*sqrt(2)*H1*L1,digits)}}{\sqrt{J+0.9999}}
    +\sage{Upper(H1^2*L2,digits)}
    +\sage{Upper(APROD,digits)}\times\log 2
    + \frac{\sage{Upper(G1*(1+sqrt(2)),digits)}}{\sqrt{\eta^{k}10^u}}\\
    &+\sage{Upper(APROD,digits)}\sum_{1<j\le J}\mathrm{r}_{\Pi(j)}\log\bigg(\frac{j+1}{j}\bigg) \sum_{\substack{n|\Pi(j)\\n< \eta^{k+1}10^u/j}}
    \frac{\mu^2(n)\varphi(n)}{n^2}m_n(j)^2\\
     &+\frac{\sage{Upper(G2,digits)}}{\sqrt{\eta^k10^u}}\sum_{1<j\le J}\mathrm{t}_{\Pi(j)}(\sqrt{j+1}+\sqrt{j})\sum_{\substack{n|\Pi(j)\\n<\eta^{k+1}10^u/j}}
    \frac{\mu^2(n)\varphi(n)}{n^{3/2}}m_n(j)^2.
  \end{align*}
By optimizing, we have that
\[
\begin{alignedat}{2}
\max_{10^8\leq X\leq 10^{12}}S_0(X) &\leq U(1.2;8,12;0,22) &\ \leq\ & \sage{Upper(Bound1,digits)},\\
\max_{10^{12}\leq X\leq 10^{16}}S_0(X) &\leq U(2.2;12,16;0,61) &\ \leq\ & \sage{Upper(Bound2,digits)},\\
\max_{10^{16}\leq X\leq 10^{33}}S_0(X) &\leq U(5.2;16,33;0,80) &\ \leq\ & \sage{Upper(Bound3,digits)},
\end{alignedat}
\]
 whence the result.
\end{proofbold}

\section{Proof of Theorem \ref{Mainbis}}\label{S_Mainbis}

Recall \cite[Lemma 8.1$\mathbf{(i)}$]{Ramare-Zuniga*23-1} concerning an estimation of the Riemann zeta function $\zeta$ near $1^+$.
\begin{lem}\label{zeta} Let $\ve>0$. Then
\begin{equation*}
\frac{1}{\ve}<\zeta(1+\ve)\leq\frac{e^{\gamma\ve}}{\ve}.
\end{equation*}
\end{lem}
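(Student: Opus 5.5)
The statement to prove is Lemma \ref{zeta}: for $\ve>0$, $1/\ve<\zeta(1+\ve)\le e^{\gamma\ve}/\ve$. I would work from the integral representation valid for $\Re s>0$,
\begin{equation*}
\zeta(s)=\frac{s}{s-1}-s\int_1^\infty\frac{\{t\}}{t^{s+1}}\,dt,
\end{equation*}
which at $s=1+\ve$ gives
\begin{equation*}
\zeta(1+\ve)=\frac{1}{\ve}+1-(1+\ve)\int_1^\infty\frac{\{t\}}{t^{2+\ve}}dt.
\end{equation*}

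\textbf{Lower bound.} Since $\{t\}<1$ on a set of positive measure, we have
\begin{equation*}
(1+\ve)\int_1^\infty \{t\}t^{-2-\ve}dt<(1+\ve)\int_1^\infty t^{-2-\ve}dt=1.
\end{equation*}
Hence $\zeta(1+\ve)>1/\ve$. An alternative is the sum–integral comparison $\sum_{n\ge1}n^{-1-\ve}>\int_1^\infty t^{-1-\ve}dt$, using that the summand is strictly decreasing.

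\textbf{Upper bound.} I would study $h(\ve)=\log(\ve\zeta(1+\ve))$ on $(0,\infty)$. The goal $\ve\zeta(1+\ve)\le e^{\gamma\ve}$ is exactly $h(\ve)\le\gamma\ve$.

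First, $h(0^+)=0$ and $h'(0^+)=\gamma$, by the Laurent expansion $\zeta(1+\ve)=1/\ve+\gamma+O(\ve)$. So it suffices to show that $h$ is concave, i.e. $h'$ is nonincreasing, because then $h(\ve)\le h(0^+)+\gamma\ve$.

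For concavity, write $\ve\zeta(1+\ve)=(s-1)\zeta(s)$ with $s=1+\ve$. This function is entire, and by the Hadamard product it equals $e^{a+bs}\prod_\rho(1-s/\rho)e^{s/\rho}$ times the trivial-zero factor $1/\Gamma(s/2+1)$ up to exponentials. Then $\frac{d^2}{ds^2}\log((s-1)\zeta(s))=-\sum_\rho (s-\rho)^{-2}-\frac14\psi'(s/2+1)$.

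\begin{itemize}
\item The trigamma term is negative for real $s$.
\item Pairing $\rho$ with $\bar\rho$ gives $\Re (s-\rho)^{-2}=\frac{(s-\beta)^2-\gamma_\rho^2}{|s-\rho|^4}$. This is positive whenever $|\gamma_\rho|>s-\beta$, which always holds for $s\le 2$, since every $|\gamma_\rho|>14$.
\end{itemize}

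So for real $s$ in the relevant range, $-\sum_\rho$ is negative and $h''<0$. This sign argument is the main obstacle; in particular I need to check it uniformly for all $s>1$. For large $s$ this can fail in principle, but there a direct bound suffices: $\zeta(1+\ve)\le 1+1/\ve$ by sum–integral comparison, and $1+1/\ve\le e^{\gamma\ve}/\ve$ once $e^{\gamma\ve}\ge 1+\ve$, which holds for $\ve\ge$ some explicit threshold (roughly $\ve\ge 1.4$). The Hadamard-based concavity covers the remaining range, where $|\gamma_\rho|\ge 14>s-\beta$ is automatic.

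A more elementary route I would try first avoids Hadamard entirely. The quantity $-h'(\ve)$ equals $-\frac{1}{\ve}-\frac{\zeta'}{\zeta}(1+\ve)=\sum_n \Lambda(n)n^{-1-\ve}-1/\ve$. Monotonicity of this can be approached directly from the integral representation of $\zeta'/\zeta$ if needed.

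In any case, the cited result \cite[Lemma 8.1]{Ramare-Zuniga*23-1} can simply be quoted, which is what the paper does.
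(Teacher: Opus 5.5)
Your lower bound is correct, and the tangent-line strategy for the upper bound is sound in principle: $h(\ve)=\log(\ve\zeta(1+\ve))$ extends real-analytically to $\ve=0$ with $h(0)=0$ and $h'(0)=\gamma$. So $h''\le 0$ on $[0,\ve_0]$ gives $h(\ve)\le\gamma\ve$ there. (The paper gives no argument of its own; it only quotes \cite[Lemma 8.1(i)]{Ramare-Zuniga*23-1}.)

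The concavity step, however, rests on a sign error. With $a=s-\beta$ and $b=\gamma_\rho$ you have $\Re (s-\rho)^{-2}=(a^2-b^2)/(a^2+b^2)^2$, which is \emph{negative} when $|\gamma_\rho|>s-\beta$. Hence for real $s\in[1,2]$ each zero contributes $-\Re(s-\rho)^{-2}>0$ to $h''$. The zeros therefore work \emph{against} concavity, and only the trigamma term $-\frac14\psi'(s/2+1)$ has the sign you want. Your sign argument does not give $h''<0$, so a quantitative comparison is unavoidable. At $s=1$, for example, the trigamma term is $\frac14\psi'(3/2)=(\pi^2-8)/8\approx 0.234$, while the zeros contribute about $+0.046$, giving $h''(0)\approx -0.19$. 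The conclusion is true, but not for the reason you state.

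Here is a repair within your framework. For real $s\ge 1$,
$\frac{\gamma_\rho^2-(s-\beta)^2}{|s-\rho|^4}\le\frac{1}{|s-\rho|^2}\le\frac{1}{\gamma_\rho^2}$,
and $\sum_\rho\gamma_\rho^{-2}\approx 0.0462$ is an explicit constant computable without assuming RH. Since $\frac14\psi'(s/2+1)$ decreases in $s$ and is still about $0.129$ at $s=2.8$, you get $h''<0$ for $\ve\in[0,1.8]$.

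Your large-$\ve$ step needs its threshold corrected. The inequality $e^{\gamma\ve}\ge 1+\ve$ holds only for $\ve$ beyond about $1.76$, not $1.4$: at $\ve=1.4$ one has $e^{1.4\gamma}\approx 2.24<2.4$. The repaired concavity range still covers this.

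Finally, your ``more elementary route'' only restates the needed inequality $\sum_n\Lambda(n)(\log n)\,n^{-1-\ve}\le\ve^{-2}$, which is equivalent to $h''\le0$. It does not prove it, so it does not close the gap either.
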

In this section, we consider $S_\ve(X)$ with $\ve\in(0,1/10]$, which allows us to apply Lemma \ref{m1e} in its full extension.
We find classically that
\begin{equation}
  \label{classic}S_\ve(X)=\sum_{\ell\le X}
  \frac{\mu^2(\ell)\varphi_{1+\ve}(\ell)}{\ell^{2+2\ve}}m_\ell\bigg(\frac{X}{\ell};1+\ve\bigg)^2,
\end{equation}
and this gives the lower bound $0\leq S_\ve(X)$. On the other hand, define
\begin{align}
\label{vv1}n_\ell(X;1+\ve)&=m_\ell(X;1+\ve)-\frac{\ell^{1+\ve}}{(\varphi_{1+\ve}(\ell)\zeta(1+\ve)}\\
&\label{vv2}=X^{-\ve}(m_\ell(X)+\ve\Delta_\ell(X,\ve)),
\end{align}
where the second equality comes from the definition of $\Delta_\ell(X,\ve)$ in Lemma~\ref{m1e}. Moreover, by using \eqref{vv1}, we have
\begin{align*}
  S_\ve(X)
  &=\sum_{\ell\le X}
  \frac{\mu^2(\ell)\varphi_{1+\ve}(\ell)}{\ell^{2+2\ve}}
  \biggl(n_\ell\bigg(\frac{X}{\ell};1+\ve\bigg)
  +\frac{\ell^{1+\ve}}{\varphi_{1+\ve}(\ell)\zeta(1+\ve)}\biggr)^2
  \\&=
  \sum_{\ell\le X}
  \frac{\mu^2(\ell)\varphi_{1+\ve}(\ell)}{\ell^{2+2\ve}}
  n_\ell\bigg(\frac{X}{\ell};1+\ve\bigg)^2
  \\&\qquad\qquad
  +\frac{2}{\zeta(1+\ve)}\sum_{\ell\le X}
  \frac{\mu^2(\ell)}{\ell^{1+\ve}}
  n_\ell\bigg(\frac{X}{\ell};1+\ve\bigg)
  +
  \frac{G(X;1+\ve)}{\zeta(1+\ve)^2},
\end{align*}
where
\begin{equation}
  \label{defG}
  G(X;1+\ve)=\sum_{\ell\le X}
  \frac{\mu^2(\ell)}{\varphi_{1+\ve}(\ell)}.
\end{equation}
Additionally, observe that
\begin{align*}
  \sum_{d\le x}
  \frac{\mu^2(\ell)}{\ell^{1+\ve}}
  n_\ell\bigg(\frac{X}{\ell};1+\ve\bigg)
  &=\sum_{L\le X}\frac{\mu^2(L)}{L^{1+\ve}}\sum_{\ell|L}\mu(\ell)
  -\sum_{\ell\le X}
  \frac{\mu^2(\ell)}{\varphi_{1+\ve}(\ell)\zeta(1+\ve)}
  \\&=1-\frac{G(X;1+\ve)}{\zeta(1+\ve)}.
\end{align*}
Subsequently,
\begin{equation*}
   S_\ve(X)
  =\sum_{\ell\le X}
  \frac{\mu^2(\ell)\varphi_{1+\ve}(\ell)}{\ell^{2+2\ve}}
  n_\ell\bigg(\frac{X}{\ell};1+\ve\bigg)^2
  +\frac{2}{\zeta(1+\ve)}-\frac{G(X;1+\ve)}{\zeta(1+\ve)^2}.
\end{equation*}
Furthermore, since the function $t\in\mathbb{R}\mapsto 1/t-1/t^2$ is decreasing for $t\geq 2$, for any $\ve\geq 0$, we have
\begin{equation*}
\frac{\mu^2(\ell)\varphi_{1+\ve}(\ell)}{\ell^{2+2\ve}}\leq\frac{\mu^2(\ell)\varphi(\ell)}{\ell^2},
\end{equation*}
and  
\begin{equation*}
   S_\ve(X)
  \leq\sum_{\ell\le X}
  \frac{\mu^2(\ell)\varphi(\ell)}{\ell^{2}}
  n_\ell\bigg(\frac{X}{\ell};1+\ve\bigg)^2
  +\frac{2}{\zeta(1+\ve)}-\frac{G(X;1+\ve)}{\zeta(1+\ve)^2}.
\end{equation*}
Now, by using \eqref{vv2} and recalling \eqref{classic}, we derive
\begin{align}
  S_\ve(X)\label{exxp}
  \leq&\ \frac{1}{X^{2\ve}}\sum_{\ell\le X}
  \frac{\mu^2(\ell)\varphi(\ell)}{\ell^{2}}
  m_\ell\bigg(\frac{X}{\ell}\bigg)^2
  +2\ve A(X)
  +\ve^2B(X)\nonumber\\
  &\qquad\qquad\qquad+\frac{2}{\zeta(1+\ve)}-\frac{G(X;1+\ve)}{\zeta(1+\ve)^2}\nonumber\\
 =&\ \frac{S_0(X)}{X^{2\ve}}
  +2\ve A(X)+\ve^2 B(X)+\frac{2}{\zeta(1+\ve)}-\frac{G(X;1+\ve)}{\zeta(1+\ve)^2},
\end{align}
where
\begin{align}
  \label{defA}
  A(X)& =\frac{1}{X^{2\ve}} \sum_{\ell\le X}
  \frac{\mu^2(\ell)\varphi(\ell)}{\ell^{2}}m_\ell\bigg(\frac{X}{\ell}\bigg)\Delta_\ell\bigg(\frac{X}{\ell},\ve\bigg),\\
  \label{defB}
  B(X) &= \frac{1}{X^{2\ve} } \sum_{\ell\le X}
  \frac{\mu^2(\ell)\varphi(\ell)}{\ell^{2}}\Delta_\ell^2\bigg(\frac{X}{\ell},\ve\bigg).
\end{align}

\begin{lem}\label{A_est} For any $X\geq 1$ and any $\ve\in[0,1/10]$,
    \begin{align*}
  A(X) 
   \leq
& \frac{\sage{Upper(3/100*loglemma1_1,digits)}\cdot \1_{X\ge 10^{12}}}{X^{2\ve}}\log\bigg(\frac{\log X}{12\log 10}\bigg)+\frac{\sage{Upper(3/100*loglemma1_2+ax_1,digits)}}{X^\ve}+\sage{Upper(ax_2*THx_aux11*41/10/sqrt(109/10)+THxxx_aux11*5/2/sqrt(109/10)+subs*THx_aux11*41/10/sqrt(47)+THxxx_aux11*5/2/sqrt(47),digits)}\cdot\frac{2^\ve}{X^{2\ve}}
     + \sage{Upper(ax_2*THxxx_aux11/2/sqrt(109/10)+subs*THxxx_aux11/2/sqrt(47),digits)}\cdot\frac{\ve 2^{2\ve}}{X^{2\ve}}.
  \end{align*}
\end{lem}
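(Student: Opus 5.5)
We need to bound $A(X)=X^{-2\ve}\sum_{\ell\le X}\frac{\mu^2(\ell)\varphi(\ell)}{\ell^2}m_\ell(X/\ell)\Delta_\ell(X/\ell,\ve)$ from above. The plan is to use the bound on $|\Delta_\ell|$ from Lemma \ref{m1e} for most $\ell$, and its one-sided bounds for the few $\ell$ where $X/\ell$ is small. The shape of the statement fixes the split. The constants $\sqrt{10.9}$ and $\sqrt{47}$ in the numerics indicate the ranges $\ell\le X/47$ (the generic range), $X/47<\ell\le X/10.9$, and $X/10.9<\ell\le X$.

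\emph{Generic range.} For $X/\ell\ge 47$ (or $\ge 10.9$) we use $|m_\ell(X/\ell)|\le 1$ (Lemma \ref{BOUND}). The $\sqrt{X/\ell}$ part of $|\Delta_\ell|$, namely $(4.1g_0(\ell)+\frac{5+\ve2^\ve}{2})\frac{\sqrt\ell}{\varphi_{1/2}(\ell)}\frac{2^\ve\sqrt\ell}{\sqrt X}$, then leads to the sums
\[
\sum_{\ell\le X/47}\frac{\mu^2(\ell)\varphi(\ell)}{\ell^{3/2}}\frac{g_0(\ell)\sqrt{\ell}}{\varphi_{1/2}(\ell)}
\quad\text{and}\quad
\sum_{\ell\le X/47}\frac{\mu^2(\ell)\varphi(\ell)}{\ell\,\varphi_{1/2}(\ell)}.
\]
These are $\Xi_2$ and $\Xi_1$ of Lemma \ref{auxnu2}, bounded by constants times $\sqrt{X/47}$. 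After multiplying by $2^\ve/\sqrt X$ they give the coefficients $\Theta\cdot 4.1/\sqrt{47}$ and $\Theta\cdot\frac52/\sqrt{47}$. The extra $\ve2^\ve/2$ produces the term in $\ve2^{2\ve}$.

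The $\log$ part of $|\Delta_\ell|$ is $0.03\,g_2(\ell)\frac{\ell^\xi}{\varphi_\xi(\ell)}\frac{\1_{X/\ell\ge10^{12}}}{\log(X/\ell)}$. Here I would not use $|m_\ell|\le1$ alone. Instead I would bound $|m_\ell(X/\ell)|$ through Lemma \ref{m4}: the $\frac{\1}{\log}$ part of Lemma \ref{m4} multiplied by this term gives the $L_2$-type contribution, while the $|m_\ell|\le1$ route gives $0.03\,L_1(X)$ of Corollary \ref{auxnu1}. The statement's first term is exactly $0.03$ times the bound of Corollary \ref{auxnu1} with the $\log\log$ piece, so I take $|m_\ell|\le 1$ there. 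The constant piece of that corollary, $0.03\cdot(\text{const})$, combines with $ax_1$ into the $X^{-\ve}$ term.

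\emph{Small $X/\ell$.} For $X/\ell<47$, Lemma \ref{m1e} gives $\Delta_\ell(X/\ell,\ve)(X/\ell)^{-\ve}\le 0$ unless $(\ell,\prod_{p\le43}p)\in\{1,11,13,17\}$. In the exceptional cases it is at most $0.014$ or $0.00005$. Since the sign of $m_\ell$ matters, I would use $m_\ell(t)\Delta\le|m_\ell(t)|\,|\Delta|$ in general, and for $t<10.9$ use $\Delta\le0$ together with $\Delta\ge -\ell/\varphi(\ell)$. Because $m_\ell(t)$ with $t<10.9$ is a finite explicit object, the product $m_\ell(t)\Delta_\ell$ can be bounded by splitting on the sign of $m_\ell(t)$. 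The density of $\ell$ coprime to $\prod_{p\le 43}p$, or with gcd in $\{11,13,17\}$, in $(X/47,X/10.9]$ is controlled by a harmonic-type sum. Lemma \ref{harmonicmu2} gives
\[
\sum_{X/10.9<\ell\le X/ \dots}\mu^2(\ell)/\ell\approx\tfrac{6}{\pi^2}\log(10.9/5)+(1.048-1.040),
\]
which matches $ax_1=\frac1{30}(\dots)$. The rational constants $2323/30030$ and $57731/570570$ in $ax_2$ look like densities $\prod(1-1/p)$-type computations for the coprimality classes. The factor $(X/\ell)^{\ve}\le X^{\ve}$, combined with the prefactor $X^{-2\ve}$, explains the $X^{-\ve}$ term.

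The main obstacle is this last range, $10.9\le X/\ell<47$ and $X/\ell<10.9$. There one must actually exploit the one-sided bounds of Lemma \ref{m1e} together with explicit values of $m_\ell(t)$ for small $t$ and the residue-class densities of $\ell$. I would first try a direct case analysis over $t=X/\ell$ in these bounded ranges, bounding $|m_\ell(t)|$ by explicit computation over the finitely many squarefree-support patterns modulo $\prod_{p\le 43}p$.
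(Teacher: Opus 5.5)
Your treatment of the range $\ell\le X/47$ matches the paper's. It uses $|m_\ell|\le 1$ from Lemma~\ref{BOUND}, sums the $\sqrt{\ell/X}$-part of the bound for $|\Delta_\ell|$ through $\Xi_1,\Xi_2$ of Lemma~\ref{auxnu2}, and obtains $0.03\,L_1(X)$ from the logarithmic part via Corollary~\ref{auxnu1}, whose constant is merged into the $X^{-\ve}$ term. Your outline for $X/\ell<10.9$ is also the right one. It rests on two finite checks, which you should state explicitly. First, $m_\ell(t)\ge 0$ for $t<5$, so these terms are simply discarded, since $\Delta_\ell\le 0$ there. Second, $m_\ell(t)\ge -1/30$ for $5\le t<10.9$, where $-1/30=1-\frac12-\frac13-\frac15$. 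Then $-\Delta_\ell(X/\ell,\ve)\le X^{\ve}\ell/\varphi(\ell)$ and Lemma~\ref{harmonicmu2} on $(X/10.9,X/5]$ give the $\frac1{30}(\dots)$ part of the $X^{-\ve}$ coefficient.

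The genuine gap is the range $X/47<\ell\le X/10.9$, which you leave open, and the route you sketch there would not work. The fractions $2323/30030$ and $57731/570570$ are not densities of coprimality classes. They are extreme values of $m_\ell(t)$ for $10.9<t<47$: $-2323/30030=m(13)$, and $57731/570570=m_\ell(22)$ when $(\ell,\prod_{p\le 43}p)=17$.

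The missing step is a pointwise inequality $m_\ell(X/\ell)\Delta_\ell(X/\ell,\ve)\le a\,|\Delta_\ell(X/\ell,\ve)|$, with $a=57731/570570\approx 0.101$ the larger of the two fractions. It is obtained by pairing signs. Outside the classes $(\ell,\prod_{p\le 43}p)\in\{1,11,13,17\}$, Lemma~\ref{m1e} gives $\Delta_\ell\le 0$, so only the lower bound $m_\ell(t)\ge -2323/30030$ is needed. On the exceptional classes an upper bound for $m_\ell(t)$ is needed as well. Bounding $|m_\ell(t)|$, as you propose, gains nothing, since $m_\ell(t)=1$ as soon as every prime up to $t$ divides $\ell$.

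With the pointwise inequality you write $a\sum_{X/47<\ell\le X/10.9}+\sum_{\ell\le X/47}\le a\sum_{\ell\le X/10.9}+(1-a)\sum_{\ell\le X/47}$. You then insert the bound of Lemma~\ref{m1e} for $|\Delta_\ell|$ and apply Lemma~\ref{auxnu2} at $X/10.9$ and at $X/47$. This is where the $\sqrt{10.9}$-terms weighted by $a$ and the $\sqrt{47}$-terms weighted by $1-a$ in the constants come from.

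Without this step, taking $|m_\ell|\le 1$ for all $\ell\le X/10.9$ gives coefficients of about $4.22$ for $2^{\ve}X^{-2\ve}$ and $0.39$ for $\ve 2^{2\ve}X^{-2\ve}$. The statement has about $4.11$ and $0.21$, so the stated constants are out of reach.

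When you run the finite computation, check the exceptional classes $11$ and $13$ individually. For instance, $m_\ell(15)=69/455>a$ when $(\ell,\prod_{p\le 43}p)=11$. On those classes the inequality $m_\ell\Delta_\ell\le a|\Delta_\ell|$ must come from the sign or smallness of $\Delta_\ell$ given by Lemma~\ref{m1e}, not from the size of $m_\ell$ alone.
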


\begin{proof} We divide the proof into four parts.\\
\noindent\phantom{$\bullet$}$\bullet$\ Bounding $A(X)$, first reduction. By direct verification, $m_\ell(t)\ge0$
and $\Delta_\ell(t,\ve)\le 0$ when $t<5$ and $\ve\leq 1/10$. We may thus discard the terms with $X/\ell<5$ and derive
\begin{equation}
  A(X) \leq \frac{1}{X^{2\ve}}\sum_{\ell\le \frac{X}{5}}
  \frac{\mu^2(\ell)\varphi(\ell)}{\ell^{2}}m_\ell\bigg(\frac{X}{\ell}\bigg)\Delta_\ell\bigg(\frac{X}{\ell},\ve\bigg)
\end{equation}

\noindent\phantom{$\bullet$}$\bullet$\ Bounding $A(X)$, second reduction. 
 When $\ell > X/10.9$, we still have $\Delta_d(X/\ell,\ve)\le0$ but
$m_\ell(X/\ell)$ may be negative. However, a quick computation shows that
$m_\ell(t)\ge -1/30$ for $t\in[5,10.9)$. Thus
\begin{align*}
  A(X)\le &-\frac{1}{30 X^{2\ve}}\sum_{\frac{X}{10.9}< \ell\le \frac{X}{5}}
  \frac{\mu^2(\ell)\varphi(\ell)}{\ell^{2}}\Delta_\ell\bigg(\frac{X}{\ell},\ve\bigg)
  \\&\qquad +
  \frac{1}{X^{2\ve}}\sum_{\ell\le \frac{X}{10.9}}
  \frac{\mu^2(\ell)\varphi(\ell)}{\ell^{2}}m_\ell\bigg(\frac{X}{\ell}\bigg)\Delta_\ell\bigg(\frac{X}{\ell},\ve\bigg).
\end{align*}
On the first sum, by Lemma \ref{m1e}  we may use 
\begin{equation*}-\Delta_\ell\bigg(\frac{X}{\ell},\ve\bigg)\leq\frac{X^\ve}{\ell^\ve}\frac{\ell}{\varphi(\ell)}
\leq X^\ve \frac{\ell}{\varphi(\ell)}
\end{equation*} 
and
then appeal to Lemma~\ref{harmonicmu2}. Thus, provided that $X\geq 10.9$, this leads to
\begin{align}\label{second_red}
  A(X)&\le\frac{1}{30X^\ve}\sum_{\frac{X}{10.9}< \ell\le \frac{X}{5}}
  \frac{\mu^2(\ell)}{\ell} +
  \frac{1}{X^{\ve}}\sum_{\ell\le \frac{X}{10.9}}
  \frac{\mu^2(\ell)\varphi(\ell)}{\ell^{2}}m_\ell\bigg(\frac{X}{\ell}\bigg)\Delta_\ell\bigg(\frac{X}{\ell},\ve\bigg)\nonumber\\
  &\le\frac{1}{30X^\ve}\bigg(\frac{6\log(10.9/5)}{\pi^2}+\sage{Trunc(Harm2,digits)}-  \sage{Trunc(Harm1,digits)}\bigg)\nonumber\\
  &\qquad\qquad
  + \frac{1}{X^{2\ve}}\sum_{\ell\le \frac{X}{10.9}}
  \frac{\mu^2(\ell)\varphi(\ell)}{\ell^{2}}m_\ell\bigg(\frac{X}{\ell}\bigg)\Delta_\ell\bigg(\frac{X}{\ell},\ve\bigg)\nonumber\\
  &\le\frac{\sage{Upper(1/30*(6/pi^2*log(10.9/5)+Harm2-Harm1),digits)}}{X^\ve}
  + \frac{1}{X^{2\ve}} \sum_{\ell\le \frac{X}{10.9}}
  \frac{\mu^2(\ell)\varphi(\ell)}{\ell^{2}}m_\ell\bigg(\frac{X}{\ell}\bigg)\Delta_\ell\bigg(\frac{X}{\ell},\ve\bigg).
\end{align}
\noindent\phantom{$\bullet$}$\bullet$\ Bounding $A(X)$, third reduction. A simple program shows that
\begin{equation*}
\min_{\substack{10.9<t<47\\(\ell,\prod_{p<47}p)\notin\{1,11,13, 17\}}}m_\ell(t)\ge -\frac{2323}{30030},\qquad \max_{\substack{10.9<t<47\\(\ell,\prod_{p<47}p)\in\{1,11,13, 17\}}} m_\ell(t)\le \frac{57731}{570570}
\end{equation*} 
(the first inequality above could be have been 
foreseen, see Question~1 in~\cite{Ramare*12-4}). Furthermore, by Lemma \ref{m1e}, when $\ell< X/47$ and $(\ell,\prod_{p<47}p)\notin\{1,11,13, 17\}$, the
quantity $\Delta_\ell(X/\ell,\ve)$ is non-positive.
Therefore,
when $X/47<\ell\le \frac{X}{10.9}$, 
\begin{align}\nonumber
  m_\ell\bigg(\frac{X}{\ell}\bigg)\Delta_\ell\bigg(\frac{X}{\ell},\ve\bigg)&\le
  \max\Bigl(\frac{2323}{30030}\bigg|\Delta_\ell\bigg(\frac{X}{\ell},\ve\bigg)\bigg|,
  \frac{57731}{570570}\bigg|\Delta_\ell\bigg(\frac{X}{\ell},\ve\bigg)\bigg|\bigg)\\
  &\leq\sage{Upper(max(u1,u2),digits)}\bigg|\Delta_\ell\bigg(\frac{X}{\ell},\ve\bigg)\bigg|.
\end{align}
When $\ell\leq X/47$ we use the inequality given in Lemma \ref{BOUND}. By continuing from \eqref{second_red}, we have
\begin{align*}
  A(X)&\le
  \frac{\sage{Upper(ax_1,digits)}}{X^\ve}+\frac{\sage{Upper(ax_2,digits)}}{X^{2\ve}}\sum_{\frac{X}{47} < \ell\le \frac{X}{10.9}}
  \frac{\mu^2(\ell)\varphi(\ell)}{\ell^{2}}\bigg|\Delta_\ell\bigg(\frac{X}{\ell},\ve\bigg)\bigg|
  \\&\qquad\qquad+\frac{1}{X^{2\ve}}\sum_{\ell\le \frac{X}{47}}
  \frac{\mu^2(\ell)\varphi(\ell)}{\ell^{2}}\bigg|\Delta_\ell\bigg(\frac{X}{\ell},\ve\bigg)\bigg|
\end{align*}
which simplifies into
\begin{align*}
  &A(X)\le
  \frac{\sage{Upper(ax_1,digits)}}{X^\ve}+\frac{\sage{Upper(ax_2,digits)}}{X^{2\ve}}\sum_{\frac{X}{10^{12}} < \ell\le \frac{X}{10.9}}
  \frac{\mu^2(\ell)\varphi(\ell)}{\ell^{2}}\bigg|\Delta_\ell\bigg(\frac{X}{\ell},\ve\bigg)\bigg|\\
  \\&+\frac{1-\sage{Upper(ax_2,digits)}}{X^{2\ve}}\sum_{\frac{X}{10^{12}} < \ell\le \frac{X}{47}}
  \frac{\mu^2(\ell)\varphi(\ell)}{\ell^{2}}\bigg|\Delta_\ell\bigg(\frac{X}{\ell},\ve\bigg)\bigg|
  + \frac{1}{X^{2\ve}}\sum_{\ell\le \frac{X}{10^{12}}}
  \frac{\mu^2(\ell)\varphi(\ell)}{\ell^{2}}\bigg|\Delta_\ell\bigg(\frac{X}{\ell},\ve\bigg)\bigg|.
\end{align*}
By using the bound for $\Delta_\ell(X/\ell,\ve)$ presented in Lemma \ref{m1e} and upon rearranging terms, we derive 
    \begin{align}\label{crux}
  A(X)\le&\ \frac{ \sage{Upper(ax_1,digits)}}{X^\ve}+\frac{\sage{Upper(ax_2,digits)}}{X^{2\ve}}\sum_{  \ell\le \frac{X}{10.9}}
  \frac{\mu^2(\ell)\varphi(\ell)}{\ell^{2}}\bigg(\biggl(
    4.1\,g_0(\ell)+ \frac{5 + \ve 2^\ve}{2}\frac{\sqrt{\ell}}{\varphi_{\frac12}(\ell)}
    \biggr)\frac{2^\ve\sqrt{\ell}}{\sqrt{X}}\bigg)\nonumber
  \\&\ +\frac{1-\sage{Upper(ax_2,digits)}}{X^{2\ve}}\sum_{ \ell\le \frac{X}{47}}
  \frac{\mu^2(\ell)\varphi(\ell)}{\ell^{2}}\bigg(\biggl(
    4.1\,g_0(\ell)
    +
    \frac{5 + \ve 2^\ve}{2}\frac{\sqrt{\ell}}{\varphi_{\frac12}(\ell)}
    \biggr)\frac{2^\ve\sqrt{\ell}}{\sqrt{X}}\bigg)\nonumber
  \\&+ \frac{0.03}{X^{2\ve}}\sum_{\ell\le \frac{X}{10^{12}}}
  \frac{\mu^2(\ell)\varphi(\ell)}{\ell^{2}}\bigg(\,g_2(\ell)\frac{\ell^\xi}{\varphi_\xi(\ell)}\frac{\1_{X/10^{12}\ge\ell}}{\log (X/\ell)}
    \bigg)\nonumber\\
    \leq&\ 
  \frac{\sage{Upper(ax_1,digits)}}{X^\ve}+\frac{\sage{Upper(ax_2,digits)}}{X^{2\ve}}\cdot A_1(X)+\frac{\sage{Trunc(subs,digits)}}{X^{2\ve}}\cdot A_2(X)+ \frac{0.03\ \1_{X\ge 10^{12}}}{X^{2\ve}}\cdot L_1(X),
  \end{align}
where $L_1(X)$ is defined in Corollary \ref{auxnu1}.

\noindent\phantom{$\bullet$}$\bullet$\ Bounding $A(X)$, final treatment. 
Observe that
    \begin{align*}
  A_1(&X)    =\frac{4.1\cdot 2^\ve}{\sqrt{X}}\sum_{  \ell\le \frac{X}{10.9}}
  \frac{\mu^2(\ell)\varphi(\ell)g_0(\ell)}{\ell\varphi_{\frac12}(\ell)}
    + \frac{(5 + \ve 2^\ve)2^\ve}{2\sqrt{X}}\sum_{  \ell\le \frac{X}{10.9}}
  \frac{\mu^2(\ell)\varphi(\ell)}{\ell\varphi_{\frac12}(\ell)}\\
  \leq&\frac{ \sage{Upper(THx_aux11,digits)}\times 4.1\times2^\ve}{\sqrt{X}} \sqrt{\frac{X}{10.9}}
    + \frac{\sage{Upper(THxxx_aux11,digits)}\cdot (5 + \ve 2^\ve)2^\ve}{2\sqrt{X}}\sqrt{\frac{X}{10.9}}\\
    \leq&\  \bigg(\frac{\sage{Upper(THx_aux11,digits)}\times 4.1}{\sqrt{10.9}}
    + \frac{\sage{Upper(THxxx_aux11,digits)}\times 5 }{2\sqrt{10.9}}\bigg) 2^\ve
     + \frac{\sage{Upper(THxxx_aux11,digits)}}{2\sqrt{10.9}}\cdot\ve 2^{2\ve}\leq
     \sage{Upper(THx_aux11*41/10/sqrt(109/10)+THxxx_aux11*5/2/sqrt(109/10),digits)}\cdot 2^\ve
     + \sage{Upper(THxxx_aux11/2/sqrt(109/10),digits)}\cdot\ve 2^{2\ve},
     \end{align*}
     where we have used Lemma \ref{auxnu2}$\mathbf{(i)}\&\mathbf{(ii)}$ with $X$ replaced by $X/10.9$. Likewise, with $X$ replaced by $X/47$, 
    \begin{align*}
 A_2(X)&= \frac{4.1\cdot 2^\ve}{\sqrt{X}}\sum_{ \ell\le \frac{X}{47}}
  \frac{\mu^2(\ell)\varphi(\ell)g_0(\ell)}{\ell\varphi_{\frac12}(\ell)}
    +
    \frac{(5 + \ve 2^\ve)2^\ve}{2\sqrt{X}}\sum_{  \ell\le \frac{X}{47}}
  \frac{\mu^2(\ell)\varphi(\ell)}{\ell\varphi_{\frac12}(\ell)}\\
  &\leq \bigg(\frac{\sage{Upper(THx_aux11,digits)}\times 4.1}{\sqrt{47}}
    + \frac{\sage{Upper(THxxx_aux11,digits)}\times 5 }{2\sqrt{47}}\bigg) 2^\ve
     + \frac{\sage{Upper(THxxx_aux11,digits)}}{2\sqrt{47}}\cdot\ve 2^{2\ve} \leq
      \sage{Upper(THx_aux11*41/10/sqrt(47)+THxxx_aux11*5/2/sqrt(47),digits)}\cdot 2^\ve
     + \sage{Upper(THxxx_aux11/2/sqrt(47),digits)}\cdot\ve 2^{2\ve}.
      \end{align*}
  Finally, by recalling \eqref{crux}, the bound given by Corollary \ref{auxnu1}, and combining it with the bounds for $A_1(X)$ and $A_2(X)$, $A(X)$ is bounded by
    \begin{align*}
&
  \ \frac{\sage{Upper(ax_1,digits)}}{X^\ve}+\frac{\sage{Upper(ax_2,digits)}}{X^{2\ve}}(\sage{Upper(THx_aux11*41/10/sqrt(109/10)+THxxx_aux11*5/2/sqrt(109/10),digits)}\cdot 2^\ve
     + \sage{Upper(THxxx_aux11/2/sqrt(109/10),digits)}\cdot\ve 2^{2\ve})+\frac{\sage{Trunc(subs,digits)}}{X^{2\ve}}
     (\sage{Upper(THx_aux11*41/10/sqrt(47)+THxxx_aux11*5/2/sqrt(47),digits)}\cdot 2^\ve
     + \sage{Upper(THxxx_aux11/2/sqrt(47),digits)}\cdot\ve 2^{2\ve}
)\\
&\qquad\qquad+ \frac{0.03\cdot \1_{X\ge 10^{12}}}{X^{2\ve}}\bigg(\sage{Upper(loglemma1_1,digits)}\cdot\log\bigg(\frac{\log X}{12\log 10}\bigg)+\sage{Upper(loglemma1_2,digits)}\bigg)\\
\leq&\ 
 \frac{\sage{Upper(3/100*loglemma1_1,digits)}\cdot \1_{X\ge 10^{12}}}{X^{2\ve}}\log\bigg(\frac{\log X}{12\log 10}\bigg)+\frac{\sage{Upper(3/100*loglemma1_2+ax_1,digits)}}{X^\ve}\\
 &\qquad\qquad+(\sage{Upper(ax_2*THx_aux11*41/10/sqrt(109/10)+THxxx_aux11*5/2/sqrt(109/10),digits)}+\sage{Upper(subs*THx_aux11*41/10/sqrt(47)+THxxx_aux11*5/2/sqrt(47),digits)})\cdot\frac{2^\ve}{X^{2\ve}}
     + (\sage{Upper(ax_2*THxxx_aux11/2/sqrt(109/10),digits)}+\sage{Upper(subs*THxxx_aux11/2/sqrt(47),digits)})\cdot\frac{\ve 2^{2\ve}}{X^{2\ve}},
  \end{align*}
  which gives the result.
\end{proof}

\begin{lem} \label{B_est} For any $X\geq 1$ and any $\ve\in[0,1/10]$,
\begin{align*}
  B(X) \leq&   \frac{G(X;1+\ve)}{\ve^2\zeta(1+\ve)^2}-\frac{G(X/10.9;1+\ve)}{\ve^2\zeta(1+\ve)^2}
  \\&\qquad+\frac{1}{X^{2\ve}}(\sage{Upper((1+1/c)*(3/100)^2*loglemma2,digits)}+
   \sage{Upper((25/4*THx1+5*41/10*THx2+(41/10)^2*TH1)*(c+1)/109*10,digits)}\cdot 2^{2\ve}
   +\sage{Upper((10/4*THx1+41/10*THx2)*(c+1)/109*10,digits)}\cdot\ve 2^{3\ve}
  +\sage{Upper(THx1*(c+1)/4/109*10,digits)}\cdot\ve^22^{4\ve}).
\end{align*}
\end{lem}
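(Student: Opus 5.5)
Recall that
\begin{equation*}
B(X)=X^{-2\ve}\sum_{\ell\le X}\frac{\mu^2(\ell)\varphi(\ell)}{\ell^2}\Delta_\ell(X/\ell,\ve)^2 .
\end{equation*}
We split the range of $\ell$ at $X/10.9$ and treat the two pieces with different bounds from Lemma~\ref{m1e}.

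\textbf{Range $X/10.9<\ell\le X$.} Here $X/\ell<10.9$ and $\ve\le 1/10\le 4/25$, so Lemma~\ref{m1e} gives $\Delta_\ell(X/\ell,\ve)\le 0$. We use the second lower bound:
\begin{equation*}
0\le -\frac{\Delta_\ell(X/\ell,\ve)}{(X/\ell)^\ve}\le \frac{1}{\ve\zeta(1+\ve)}\frac{\ell^{1+\ve}}{\varphi_{1+\ve}(\ell)} .
\end{equation*}
Squaring, each term is at most
\begin{equation*}
\frac{\mu^2(\ell)\varphi(\ell)}{\ell^2}\cdot\frac{\ell^{2+2\ve}}{\varphi_{1+\ve}(\ell)^2}\cdot\frac{1}{\ve^2\zeta(1+\ve)^2}.
\end{equation*}
The factor $X^{-2\ve}$ combines with $(X/\ell)^{2\ve}$ to give $\ell^{-2\ve}\le1$. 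We then need the pointwise inequality
\begin{equation*}
\frac{\varphi(\ell)\,\ell^{2\ve}}{\ell^2\,\varphi_{1+\ve}(\ell)}\le \frac{1}{\ell^{2\ve}}
\end{equation*}
for squarefree $\ell$. Prime by prime this reads $(p-1)p^{\ve}\le p^{1+\ve}-1$, which is clear. It makes the summand collapse to $\mu^2(\ell)/\varphi_{1+\ve}(\ell)$, so summing over this range gives exactly
\begin{equation*}
\frac{G(X;1+\ve)-G(X/10.9;1+\ve)}{\ve^2\zeta(1+\ve)^2}.
\end{equation*}

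\textbf{Range $\ell\le X/10.9$.} Write the upper bound of Lemma~\ref{m1e} as $|\Delta_\ell|\le U+V$. Here
\begin{equation*}
U=0.03\,g_2(\ell)\frac{\ell^\xi}{\varphi_\xi(\ell)}\frac{\1_{X/\ell\ge10^{12}}}{\log(X/\ell)},
\end{equation*}
and $V$ is the $\sqrt{\ell/X}$ term with coefficient $4.1\,g_0(\ell)+\tfrac{5+\ve2^\ve}{2}\tfrac{\sqrt\ell}{\varphi_{1/2}(\ell)}$ times $2^\ve$. We apply $(U+V)^2\le(1+1/c)U^2+(1+c)V^2$ with $c=1/1000$.

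The $U^2$ contribution is $(1+1/c)(0.03)^2L_2(X)$, bounded by Corollary~\ref{auxnu}.

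For $V^2$, we expand the square of $4.1\,g_0\,\tfrac{\sqrt\ell}{\varphi_{1/2}}$ plus $(\tfrac52+\tfrac{\ve2^\ve}{2})\tfrac{\sqrt\ell}{\varphi_{1/2}}$. Multiplying by $\mu^2\varphi(\ell)\ell/(\ell^2X)$ leaves terms of the shape
\begin{equation*}
\frac{\mu^2(\ell)\varphi(\ell)\,g_0(\ell)^{i}}{\varphi_{1/2}(\ell)^2},\qquad i\in\{0,1,2\},
\end{equation*}
summed up to $X/10.9$ and divided by $X$. These are exactly $\sigma_1,\sigma_2,\Sigma_1$ of Lemma~\ref{Aux1}, each bounded by a constant times $X/10.9$.

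Collecting powers of $\ve2^\ve$ gives the coefficients displayed in the statement:
\begin{itemize}
\item for $2^{2\ve}$: $\bigl(\tfrac{25}{4}\sigma_1+5\cdot4.1\,\sigma_2+4.1^2\Sigma_1\bigr)$;
\item for $\ve2^{3\ve}$: $\bigl(\tfrac{10}{4}\sigma_1+4.1\,\sigma_2\bigr)$;
\item for $\ve^22^{4\ve}$: $\tfrac14\sigma_1$.
\end{itemize}
Each is multiplied by $(1+c)/10.9$. The outer $X^{-2\ve}$ is kept as a factor.

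\textbf{Main obstacle.} The delicate point is the first range: one must check that the pointwise inequality above really absorbs $\varphi(\ell)/\ell^2$ and the power $(X/\ell)^{2\ve}X^{-2\ve}$ into the weight of $G$. One must also check that the sign condition of Lemma~\ref{m1e} holds on all of $X/\ell<10.9$. The rest is bookkeeping of constants.
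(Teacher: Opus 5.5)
Your proposal is correct and follows the paper's proof step by step. You split at $X/10.9$, use the lower bound $-\frac{1}{\ve\zeta(1+\ve)}\frac{\ell^{1+\ve}}{\varphi_{1+\ve}(\ell)}$ from Lemma~\ref{m1e} on the upper range, and on the lower range use $(a+b)^2\le(1+c^{-1})a^2+(1+c)b^2$ with $c=1/1000$ together with Corollary~\ref{auxnu} and Lemma~\ref{Aux1} at $X/10.9$; your coefficients of $2^{2\ve}$, $\ve2^{3\ve}$ and $\ve^22^{4\ve}$ match the paper's.

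One slip in the first range needs fixing. The factors $X^{-2\ve}(X/\ell)^{2\ve}\ell^{2\ve}$ cancel exactly, so you must keep $\ell^{-2\ve}$ rather than bound it by $1$; otherwise you would need $\varphi(\ell)\ell^{2\ve}\le\varphi_{1+\ve}(\ell)$, which is false for large primes. After the cancellation the summand is $\mu^2(\ell)\varphi(\ell)/\varphi_{1+\ve}(\ell)^2$, so what you actually need is $\varphi(\ell)\le\varphi_{1+\ve}(\ell)$, not your displayed inequality. Your prime-wise check $(p-1)p^{\ve}\le p^{1+\ve}-1$ implies this.
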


\begin{proof} We divide the proof into three parts.\\
\noindent\phantom{$\bullet$}$\bullet$\ Bounding $B(X)$, first reduction.
When $\ell > X/10.9$, Lemma~\ref{m1e} tells us that
\begin{equation*}
-\frac{1}{\ve\zeta(1+\ve)}\frac{\ell^{1+\ve}}{\varphi_{1+\ve}(\ell)}
\le \frac{\Delta_\ell(X/\ell,\ve)}{(X/\ell)^\ve}\le 0.
\end{equation*}
Thus
\begin{align}\label{B1}
&\frac{1}{X^{2\ve} } \sum_{\frac{X}{10.9}<\ell\le X}
  \frac{\mu^2(\ell)\varphi(\ell)}{\ell^{2}}\Delta_\ell^2\bigg(\frac{X}{\ell},\ve\bigg)
  \leq
  \frac{1}{\ve^2\zeta(1+\ve)^2 } \sum_{\frac{X}{10.9}<\ell\le X}
  \frac{\mu^2(\ell)\varphi(\ell)}{\varphi_{1+\ve}(\ell)^2}\nonumber\\
  &\qquad\quad\leq
  \frac{1}{\ve^2\zeta(1+\ve)^2 } \sum_{\frac{X}{10.9}<\ell\le X}
  \frac{\mu^2(\ell)}{\varphi_{1+\ve}(\ell)}= \frac{G(X;1+\ve)}{\ve^2\zeta(1+\ve)^2}-\frac{G(X/10.9;1+\ve)}{\ve^2\zeta(1+\ve)^2}.
  \end{align}
 
\noindent\phantom{$\bullet$}$\bullet$\ Bounding $B(X)$, second treatment. It remains to bound
\begin{equation}  \label{defC}
  C(X) = \frac{1}{X^{2\ve} } \sum_{\ell\le \frac{X}{10.9}}
  \frac{\mu^2(\ell)\varphi(\ell)}{\ell^{2}}\Delta_\ell^2\bigg(\frac{X}{\ell},\ve\bigg).
\end{equation}
We use Bohr's inequality
$(a+b)^2\le (1+c^{-1})(a^2+cb^2)$ valid for $c>0$ on the bound given by Lemma~\ref{m1e} to
infer that, for any $c>0$,
\begin{align*}
  C(X)
  \le&
  \frac{(1+c^{-1})0.03^2\cdot \1_{X\ge 10^{12}}}{X^{2\ve}}L_2(X)+
  \frac{2^{2\ve}(c+1)(5+\ve 2^\ve)^2}{4X^{1+2\ve}}
  \sigma_1\bigg(\frac{X}{10.9}\bigg)
    \\&\qquad+
  \frac{2^{1+2\ve}\cdot 4.1(c+1)(5+\ve 2^\ve)}{2X^{1+2\ve}}
  \sigma_2\bigg(\frac{X}{10.9}\bigg)  +
  \frac{2^{2\ve}(c+1)\cdot 4.1^2}{X^{1+2\ve}}
  \Sigma_1\bigg(\frac{X}{10.9}\bigg),
\end{align*}
where $L_2$ is defined in Corollary~\ref{auxnu} and $\sigma_1,\sigma_2,\Sigma_1$ are defined in Lemma \ref{Aux1}$\mathbf{(i)}$,$\mathbf{(ii)}$ $\&$$\mathbf{(iii)}$, respectively. Precisely, by applying Corollary~\ref{auxnu} and Lemma \ref{Aux1}$\mathbf{(i)}$,$\mathbf{(ii)}$ $\&$$\mathbf{(iii)}$, we obtain that, for any $c>0$,
\begin{align*}
  &C(X)
  \le
 \frac{1}{X^{2\ve}}\bigg( (1+c^{-1})0.03^2\times\sage{Upper(loglemma2,digits)}+
  \frac{ \sage{Upper(THx1,digits)}\cdot 2^{2\ve}(c+1)(5+\ve 2^\ve)^2}{4\times 10.9}
    \\&\qquad\qquad+
  \frac{4.1\times \sage{Upper(THx2,digits)}\cdot 2^{2\ve} (c+1)(5+\ve 2^\ve)}{10.9}
  +
  \frac{4.1^2\times\sage{Upper(TH1,digits)}\cdot 2^{2\ve}(c+1)}{10.9}\bigg)\ \ =\\
  &\frac{1}{X^{2\ve}}\bigg( \frac{(c+1) 0.03^2\times\sage{Upper(loglemma2,digits)}}{c}\\
  &\qquad\quad+
  \frac{(25/4\times \sage{Upper(THx1,digits)}+
  5\times 4.1\times \sage{Upper(THx2,digits)}+4.1^2\times\sage{Upper(TH1,digits)})(c+1)2^{2\ve}}{10.9}\\
  &\qquad\qquad+\frac{( 10/4\times\sage{Upper(THx1,digits)}+4.1\times \sage{Upper(THx2,digits)})(c+1)\ve 2^{3\ve}}{10.9}
  +\frac{ \sage{Upper(THx1,digits)}(c+1)\ve^22^{4\ve}}{4\cdot 10.9}\bigg).
\end{align*} 

\noindent\phantom{$\bullet$}$\bullet$\ Bounding $B(X)$, final treatment. By choosing $c=\sage{c}$, we obtain
\begin{equation}\label{Cbound}
  C(X)
  \le\frac{1}{X^{2\ve}}(\sage{Upper((1+1/c)*(3/100)^2*loglemma2,digits)}+
   \sage{Upper((25/4*THx1+5*41/10*THx2+(41/10)^2*TH1)*(c+1)/109*10,digits)}\cdot 2^{2\ve}
   +\sage{Upper((10/4*THx1+41/10*THx2)*(c+1)/109*10,digits)}\cdot\ve 2^{3\ve}
  +\sage{Upper(THx1*(c+1)/4/109*10,digits)}\cdot\ve^22^{4\ve}).
\end{equation} 
The result is concluded by combining and \eqref{B1} and \eqref{Cbound}.
\end{proof}
\begin{proofbold}[Theorem~\ref{Mainbis}] By combining the estimation \eqref{exxp} with lemmas \ref{A_est}$\&$\ref{B_est}, we obtain that $S_\ve(X)$ is bounded by
  \begin{align*}
 &\frac{S_0(X)}{X^{2\ve}}+2\ve\bigg(\frac{\sage{Upper(3/100*loglemma1_1,digits)}\cdot \1_{X\ge 10^{12}}}{X^{2\ve}}\log\bigg(\frac{\log X}{12\log 10}\bigg)+\sage{Upper(ax_2*THx_aux11*41/10/sqrt(109/10)+THxxx_aux11*5/2/sqrt(109/10)+subs*THx_aux11*41/10/sqrt(47)+THxxx_aux11*5/2/sqrt(47),digits)}\frac{2^\ve}{X^{2\ve}}
     + \sage{Upper(ax_2*THxxx_aux11/2/sqrt(109/10)+subs*THxxx_aux11/2/sqrt(47),digits)}\frac{\ve 2^{2\ve}}{X^{2\ve}}\bigg)
\\&\qquad+\frac{2\ve\ \sage{Upper(3/100*loglemma1_2+ax_1,digits)}}{X^\ve}+
 \frac{\ve^2}{X^{2\ve}}(\sage{Upper((1+1/c)*(3/100)^2*loglemma2,digits)}+
   \sage{Upper((25/4*THx1+5*41/10*THx2+(41/10)^2*TH1)*(c+1)/109*10,digits)}\cdot 2^{2\ve}
   +\sage{Upper((10/4*THx1+41/10*THx2)*(c+1)/109*10,digits)}\cdot\ve 2^{3\ve}
  +\sage{Upper(THx1*(c+1)/4/109*10,digits)}\cdot\ve^22^{4\ve})\\
  &\qquad\qquad\qquad+\frac{2}{\zeta(1+\ve)}-\frac{G(X/10.9,1+\ve)}{\zeta(1+\ve)^2}.
\end{align*}
Rigorous interval arithmetic calculations show that for any $X\geq 10.9\cdot 10^8$ and $\ve\in[0,1/10]$,
\begin{equation*}
G(X/10.9,1+\ve)\geq G(10^8,1+1/10)\geq \sage{Trunc(value1,digits)},
\end{equation*}
thus, by Lemma \ref{zeta}, 
\begin{equation*}
\frac{2}{\zeta(1+\ve)}-\frac{G(X/10.9,1+\ve)}{\zeta(1+\ve)^2}\leq 2\ve-\frac{\sage{Trunc(value1,digits)}\cdot\ve^2\ \1_{X\geq 10.9\cdot 10^8}}{e^{2\gamma\ve}}.
\end{equation*}
Note that the function $\ve\in[0,1/10]\mapsto2\ve-\sage{Trunc(value1,digits)}\ve^2e^{-2\gamma\ve}\ \1_{X\geq 10.9\cdot 10^8}$ is increasing.

On the other hand, observe that
\begin{align*}
&\frac{\sage{Upper(3/100*loglemma1_1,digits)}\cdot \1_{X\ge 10^{12}}}{X^{2\ve}}\log\bigg(\frac{\log X}{12\log 10}\bigg)\\
&\qquad\leq
\frac{\sage{Upper(3/100*loglemma1_1,digits)}\cdot \1_{10^{12}\leq X<e10^{12}}}{X^{2\ve}}+\frac{\sage{Upper(3/100*loglemma1_1,digits)}\cdot \1_{X\ge e10^{12}}}{X^{2\ve}}\log\bigg(\frac{\log X}{12\log 10}\bigg)\\
&\qquad\qquad\leq
\frac{\sage{Upper(3/100*loglemma1_1,digits)}\cdot \1_{X\ge 10^{12}}}{X^{2\ve}}+\frac{\sage{Upper(3/100*loglemma1_1,digits)}\cdot \1_{X\ge e10^{12}}}{X^{2\ve}}\log\bigg(\frac{\log X}{12\log 10}\bigg).
\end{align*}
Furthermore, from the inequality $1+t\leq e^t$, $t\in\mathbb{R}$, we deduce that $t/e^t<1$ for all $t\geq 0$. Thereupon, for any $\ve\geq 0$ and $X\geq 1$,
\begin{equation*}
\frac{2\ve\log X}{X^{2\ve}}=\frac{2\ve\log X}{e^{2\ve\log X}}<1
\end{equation*}
Hence,
\begin{align*}
\frac{2\ve\ \1_{X\geq e10^{12}}}{X^{2\ve}}\log\bigg(\frac{\log X}{12\log 10}\bigg)&\leq
\frac{\1_{X\geq e10^{12}}}{\log X}\log\bigg(\frac{\log X}{12\log 10}\bigg)\\
&\qquad\leq\frac{\1_{X\geq e10^{12}}}{\log(e10^{12}) }
\leq\sage{Upper(1/(1+12*log(10)),digits)}\cdot \1_{X\geq 10^{12}}
\end{align*}
since the function $t\geq e10^{12}\mapsto\log(\log t/\log(10^{12}))/\log t$ is decreasing.

Therefore, by recalling Theorem \ref{Main}, we obtain 
 \begin{align}
  S_\ve(X)\label{TotEst}\nonumber
   \leq&  \frac{S_0(X)}{X^{2\ve}}+
   \sage{Upper(3/100*loglemma1_1,digits)}\times\sage{Upper(1/(1+12*log(10)),digits)}\cdot \1_{X\geq 10^{12}}\nonumber\\
&+2\ve\bigg(\frac{\sage{Upper(3/100*loglemma1_1,digits)}\ \1_{X\ge 10^{12}}}{X^{2\ve}}+\frac{\sage{Upper(3/100*loglemma1_2+ax_1,digits)}}{X^\ve}+\frac{\sage{Upper(ax_2*THx_aux11*41/10/sqrt(109/10)+THxxx_aux11*5/2/sqrt(109/10)+subs*THx_aux11*41/10/sqrt(47)+THxxx_aux11*5/2/sqrt(47),digits)}\cdot 2^\ve}{X^{2\ve}}
     + \frac{\sage{Upper(ax_2*THxxx_aux11/2/sqrt(109/10)+subs*THxxx_aux11/2/sqrt(47),digits)}\cdot\ve 2^{2\ve}}{X^{2\ve}}\bigg)
\nonumber\\&\ +
 \frac{\ve^2}{X^{2\ve}}(\sage{Upper((1+1/c)*(3/100)^2*loglemma2,digits)}+
   \sage{Upper((25/4*THx1+5*41/10*THx2+(41/10)^2*TH1)*(c+1)/109*10,digits)}\cdot 2^{2\ve}
   +\sage{Upper((10/4*THx1+41/10*THx2)*(c+1)/109*10,digits)}\cdot\ve 2^{3\ve}
  +\sage{Upper(THx1*(c+1)/4/109*10,digits)}\cdot\ve^22^{4\ve})\nonumber\\
  &\ \ +2\ve-\frac{\sage{Trunc(value1,digits)}\ve^2\cdot \1_{X\geq 10.9\cdot 10^8}}{e^{2\gamma\ve}}.
\end{align}

We will derive now two bounds for $S_\ve(X)$, depending on the range of the variables $\ve$ and $T$. Firstly, by the three inequalities
\begin{equation*}
\frac{\ve}{X^{\ve}}\leq\frac{1}{\log X},\quad\frac{2\ve}{X^{2\ve}}\leq\frac{1}{\log X},\quad\frac{1}{X^{2\ve}}\leq 1,
\end{equation*}
we obtain, 
 \begin{align*}
  S_\ve(X)
   \leq& S_0(X)+
   \sage{Upper(3/100*loglemma1_1,digits)}\times\sage{Upper(1/(1+12*log(10)),digits)}\cdot \1_{X\geq 10^{12}}
   +2\ve-\frac{\sage{Trunc(value1,digits)}\cdot\ve^2\ \1_{X\geq 10.9\cdot 10^8}}{e^{2\gamma\ve}}\nonumber\\
&+\min\bigg\{2\ve,\frac{1}{\log X}\bigg\}\bigg(\sage{Upper(3/100*loglemma1_1,digits)}\cdot \1_{X\ge 10^{12}}+\sage{Upper(ax_2*THx_aux11*41/10/sqrt(109/10)+THxxx_aux11*5/2/sqrt(109/10)+subs*THx_aux11*41/10/sqrt(47)+THxxx_aux11*5/2/sqrt(47),digits)}\cdot 2^\ve+
\sage{Upper(ax_2*THxxx_aux11/2/sqrt(109/10)+subs*THxxx_aux11/2/sqrt(47),digits)}\cdot\ve 2^{2\ve}\bigg)
\\&+\min\bigg\{\ve,\frac{1}{\log X}\bigg\}\cdot 2\times\sage{Upper(3/100*loglemma1_2+ax_1,digits)}\\
&+\min\bigg\{2\ve,\frac{1}{\log X}\bigg\}\frac{\ve}{2}(\sage{Upper((1+1/c)*(3/100)^2*loglemma2,digits)}+
   \sage{Upper((25/4*THx1+5*41/10*THx2+(41/10)^2*TH1)*(c+1)/109*10,digits)}\cdot 2^{2\ve}
   +\sage{Upper((10/4*THx1+41/10*THx2)*(c+1)/109*10,digits)}\cdot\ve 2^{3\ve}
  +\sage{Upper(THx1*(c+1)/4/109*10,digits)}\cdot\ve^22^{4\ve})
\end{align*}
By Theorem \ref{Main}, we have that
\begin{equation}\label{Theta}
S_0(X)\leq\Theta(T,T_0):=v\text{ if }T\leq X<T_0,
\end{equation}
for 
\begin{align*}(T,T_0;v)\in&\{(422,10^8;\sage{Trunc(Bound0,digits)}),(10^8,10^{12};\sage{Upper(Bound1,digits)}),\\
&\quad\quad(10^{12},10^{16};\sage{Upper(Bound2,digits)}),(10^{16},10^{33};\sage{Upper(Bound3,digits)}),\\
&\qquad\qquad(10^{33},\infty;\sage{Upper(Bound4,digits)})\}. 
\end{align*}

Let $\ve\leq1/R<25/4$. If we assume that $T\leq X<T_0$ for the indicated values above, then
 \begin{equation}\label{1stEst}
  \max_{\substack{\ve\in[0,1/R]\\T\leq X< T_0}}S_\ve(X)
   \leq\Omega_1[R;T,T_0]
\end{equation}
where
 \begin{align*}
  \Omega_1[R;T,T_0]=&\ \Theta(T,T_0)+
   \sage{Upper(3/100*loglemma1_1,digits)}\times\sage{Upper(1/(1+12*log(10)),digits)}\cdot \1_{T_0> 10^{12}}
   +\frac{2}{R}-\frac{\sage{Trunc(value1,digits)}\cdot \1_{T\geq 10.9\cdot 10^8}}{R^2e^{2\gamma/R}}\nonumber\\
&+\min\bigg\{\frac{2}{R},\frac{1}{\log T}\bigg\}\bigg(\sage{Upper(3/100*loglemma1_1,digits)}\cdot \1_{T_0> 10^{12}}+\sage{Upper(ax_2*THx_aux11*41/10/sqrt(109/10)+THxxx_aux11*5/2/sqrt(109/10)+subs*THx_aux11*41/10/sqrt(47)+THxxx_aux11*5/2/sqrt(47),digits)}\cdot 2^{1/R}+
\sage{Upper(ax_2*THxxx_aux11/2/sqrt(109/10)+subs*THxxx_aux11/2/sqrt(47),digits)}\cdot\frac{2^{2/R}}{R}\bigg)\\
&
+\min\bigg\{\frac{1}{R},\frac{1}{\log T}\bigg\}\cdot 2\times\sage{Upper(3/100*loglemma1_2+ax_1,digits)}\\
+\min\bigg\{&\frac{2}{R},\frac{1}{\log T}\bigg\}\frac{1}{2R}\bigg(\sage{Upper((1+1/c)*(3/100)^2*loglemma2,digits)}+
   \sage{Upper((25/4*THx1+5*41/10*THx2+(41/10)^2*TH1)*(c+1)/109*10,digits)}\cdot 2^{2/R}
   +\sage{Upper((10/4*THx1+41/10*THx2)*(c+1)/109*10,digits)}\cdot\frac{2^{3/R}}{R}
  +\sage{Upper(THx1*(c+1)/4/109*10,digits)}\cdot\frac{2^{4/R}}{R^2}\bigg),
\end{align*}
and $\Theta(T,T_0)$ is defined in \eqref{Theta}. 

Alternatively, when $\ve\in(1/R,1/S]$, $S\geq 25/4$ and $T\leq X< T_0$, we have the following two inequalities
 \begin{equation*}
 \frac{1}{X^\ve}\leq\frac{1}{e^{(\log T)/R}},\quad \frac{1}{X^{2\ve}}\leq\frac{1}{e^{2(\log T)/R}}
  \end{equation*}
and the bound
\begin{equation}\label{2ndEst}
  \max_{\substack{\ve\in(1/R,1/S]\\T\leq X< T_0}}S_\ve(X)
   \leq\Omega_2[R,S;T,T_0],
\end{equation}
where
 \begin{align*} 
&\Omega_2[R,S;T,T_0]=\frac{\Theta(T,T_0)}{e^{2(\log T)/R}}+   \sage{Upper(3/100*loglemma1_1,digits)}\times\sage{Upper(1/(1+12*log(10)),digits)}\cdot \1_{T_0> 10^{12}}+\frac{1}{e^{(\log T)/R}}\frac{2\times\sage{Upper(3/100*loglemma1_2+ax_1,digits)}}{S}
   \\&\quad+\frac{2}{Se^{2(\log T)/R}}\bigg(\sage{Upper(3/100*loglemma1_1,digits)}\cdot \1_{T_0> 10^{12}}+\sage{Upper(ax_2*THx_aux11*41/10/sqrt(109/10)+THxxx_aux11*5/2/sqrt(109/10)+subs*THx_aux11*41/10/sqrt(47)+THxxx_aux11*5/2/sqrt(47),digits)}\cdot 2^{1/S}
     + \frac{\sage{Upper(ax_2*THxxx_aux11/2/sqrt(109/10)+subs*THxxx_aux11/2/sqrt(47),digits)}\cdot 2^{2/S}}{S}\bigg)
\nonumber\\&\qquad+
\frac{1}{S^2e^{2(\log T)/R}}\bigg(\sage{Upper((1+1/c)*(3/100)^2*loglemma2,digits)}+
   \sage{Upper((25/4*THx1+5*41/10*THx2+(41/10)^2*TH1)*(c+1)/109*10,digits)}\cdot 2^{2/S}
   +\frac{\sage{Upper((10/4*THx1+41/10*THx2)*(c+1)/109*10,digits)}\cdot 2^{3/S}}{S}
  +\frac{\sage{Upper(THx1*(c+1)/4/109*10,digits)}\cdot 2^{4/S}}{S^2}\bigg)\nonumber\\
  &\qquad\quad+\frac{2}{S}-\frac{\sage{Trunc(value1,digits)}\cdot \1_{T\geq 10.9\cdot 10^8}}{S^2e^{2\gamma/S}}.
\end{align*}
On combining the bounds \eqref{1stEst} and \eqref{2ndEst}, we have that for any $S\geq 25/4$ and $R>S$,
\begin{equation}\label{FinalEst}
  \max_{\substack{\ve\in[0,1/S]\\T\leq X< T_0}}S_\ve(X)
   \leq\max\{\Omega_1[R;T,T_0],\Omega_2[R,S;T,T_0]\}
\end{equation}
With estimation \eqref{FinalEst} at hand, we can select $S$ and a range $[T,T_0)$ for the variable $X$ ($T_0=\infty$ is permitted) and then run a computation to optimise in $R$. Here we present some of our outcomes, which in turn give the result.

\begin{align*}
  \max_{\substack{\ve\in[0,1/\sage{S01}]\\422\leq X< 10^{8}}}S_\ve(X)
   &\leq\max\{\Omega_1[\sage{R01};422,10^8],\Omega_2[\sage{R01},\sage{S01};422,10^8]\}\\
   &\quad\leq\max\{\sage{Upper(Omega1(R01,422,10^8),digits)},\sage{Upper(Omega2(R01,S01,422,10^8),digits)}\}
   =\sage{max(Upper(Omega1(R01,422,10^8),digits),Upper(Omega2(R01,S01,422,10^8),digits))},\\
  \max_{\substack{\ve\in[0,1/\sage{S12}]\\422\leq X< 10^{8}}}S_\ve(X)
   &\leq\max\{\Omega_1[\sage{R02};422,10^8],\Omega_2[\sage{R02},\sage{S02};422,10^8]\}\\
   &\quad\leq\max\{\sage{Upper(Omega1(R02,422,10^8),digits)}\sage{Upper(Omega2(R02,S02,422,10^8),digits)}\},
   =\sage{max(Upper(Omega1(R02,422,10^8),digits),Upper(Omega2(R02,S02,422,10^8),digits))},\\
  \max_{\substack{\ve\in[0,1/\sage{S03}]\\422\leq X< 10^{8}}}S_\ve(X)
   &\leq\max\{\Omega_1[\sage{R03};422,10^8],\Omega_2[\sage{R03},\sage{S03};422,10^8]\}\\
   &\quad\leq\max\{\sage{Upper(Omega1(R03,422,10^8),digits)}\sage{Upper(Omega2(R03,S03,422,10^8),digits)}\}
   =\sage{max(Upper(Omega1(R03,422,10^8),digits),Upper(Omega2(R03,S03,422,10^8),digits))}.
\end{align*}

\begin{align*}
  \max_{\substack{\ve\in[0,1/\sage{S11}]\\10^8\leq X< 10^{12}}}S_\ve(X)
   &\leq\max\{\Omega_1[\sage{R11};10^8,10^{12}],\Omega_2[\sage{R11},\sage{S11};10^8,10^{12}]\}\\
   &\quad\leq\max\{\sage{Upper(Omega1(R11,10^8,10^(12)),digits)},\sage{Upper(Omega2(R11,S11,10^8,10^(12)),digits)}\}
   =\sage{max(Upper(Omega1(R11,10^8,10^(12)),digits),Upper(Omega2(R11,S11,10^8,10^(12)),digits))},\\
  \max_{\substack{\ve\in[0,1/\sage{S12}]\\10^8\leq X< 10^{12}}}S_\ve(X)
   &\leq\max\{\Omega_1[\sage{R12};10^8,10^{12}],\Omega_2[\sage{R12},\sage{S12};10^8,10^{12}]\}\\
   &\quad\leq\max\{\sage{Upper(Omega1(R12,10^8,10^(12)),digits)}\sage{Upper(Omega2(R12,S12,10^8,10^(12)),digits)}\},
   =\sage{max(Upper(Omega1(R12,10^8,10^(12)),digits),Upper(Omega2(R12,S12,10^8,10^(12)),digits))},\\
  \max_{\substack{\ve\in[0,1/\sage{S13}]\\10^8\leq X< 10^{12}}}S_\ve(X)
   &\leq\max\{\Omega_1[\sage{R13};10^8,10^{12}],\Omega_2[\sage{R13},\sage{S13};10^8,10^{12}]\}\\
   &\quad\leq\max\{\sage{Upper(Omega1(R13,10^8,10^(12)),digits)}\sage{Upper(Omega2(R13,S13,10^8,10^(12)),digits)}\}
   =\sage{max(Upper(Omega1(R13,10^8,10^(12)),digits),Upper(Omega2(R13,S13,10^8,10^(12)),digits))}.
\end{align*}

\begin{align*}
  \max_{\substack{\ve\in[0,1/\sage{S21}]\\10^{12}\leq X<10^{16}}}S_\ve(X)
   &\leq\max\{\Omega_1[\sage{R21};10^{12},10^{16}],\Omega_2[\sage{R21},\sage{S21};10^{12},10^{16}]\}\\
   &\quad\leq\max\{\sage{Upper(Omega1(R21,10^(12),10^(16)),digits)},\sage{Upper(Omega2(R21,S21,10^(12),10^(16)),digits)}\}
   =\sage{max(Upper(Omega1(R21,10^(12),10^(16)),digits),Upper(Omega2(R21,S21,10^(12),10^(16)),digits))},\\
  \max_{\substack{\ve\in[0,1/\sage{S22}]\\10^{12}\leq X<10^{16}}}S_\ve(X)
   &\leq\max\{\Omega_1[\sage{R22};10^{12},10^{16}],\Omega_2[\sage{R22},\sage{S22};10^{12},10^{16}]\}\\
   &\quad\leq\max\{\sage{Upper(Omega1(R22,10^(12),10^(16)),digits)},\sage{Upper(Omega2(R22,S22,10^(12),10^(16)),digits)}\}
   =\sage{max(Upper(Omega1(R22,10^(12),10^(16)),digits),Upper(Omega2(R22,S22,10^(12),10^(16)),digits))},\\
  \max_{\substack{\ve\in[0,1/\sage{S23}]\\10^{12}\leq X<10^{16}}}S_\ve(X)
   &\leq\max\{\Omega_1[\sage{R23};10^{12},10^{16}],\Omega_2[\sage{R23},\sage{S23};10^{12},10^{16}]\}\\
   &\quad\leq\max\{\sage{Upper(Omega1(R23,10^(12),10^(16)),digits)},\sage{Upper(Omega2(R23,S23,10^(12),10^(16)),digits)}\}
   =\sage{max(Upper(Omega1(R23,10^(12),10^(16)),digits),Upper(Omega2(R23,S23,10^(12),10^(16)),digits))},
   \end{align*}
   
   \begin{align*}
  \max_{\substack{\ve\in[0,1/\sage{S31}]\\10^{16}\leq X<10^{33}}}S_\ve(X)
   &\leq\max\{\Omega_1[\sage{R31};10^{16},10^{33}],\Omega_2[\sage{R31},\sage{S31};10^{16},10^{33}]\}\\
   &\quad\leq\max\{\sage{Upper(Omega1(R31,10^(16),10^(33)),digits)},\sage{Upper(Omega2(R31,S31,10^(16),10^(33)),digits)}\}
   =\sage{max(Upper(Omega1(R31,10^(16),10^(33)),digits),Upper(Omega2(R31,S31,10^(16),10^(33)),digits))},\\
  \max_{\substack{\ve\in[0,1/\sage{S32}]\\10^{16}\leq X< 10^{33}}}S_\ve(X)
   &\leq\max\{\Omega_1[\sage{R32};10^{16},10^{33}],\Omega_2[\sage{R32},\sage{S32};10^{16},10^{33}]\}\\
   &\quad\leq\max\{\sage{Upper(Omega1(R32,10^(16),10^(33)),digits)},\sage{Upper(Omega2(R32,S32,10^(16),10^(33)),digits)}\}
   =\sage{max(Upper(Omega1(R32,10^(16),10^(33)),digits),Upper(Omega2(R32,S32,10^(16),10^(33)),digits))},\\
  \max_{\substack{\ve\in[0,1/\sage{S33}]\\10^{16}\leq X< 10^{33}}}S_\ve(X)
   &\leq\max\{\Omega_1[\sage{R33};10^{16},10^{33}],\Omega_2[\sage{R33},\sage{S33};10^{16},10^{33}]\}\\
   &\quad\leq\max\{\sage{Upper(Omega1(R33,10^(16),10^(33)),digits)},\sage{Upper(Omega2(R33,S33,10^(16),10^(33)),digits)}\}
   =\sage{max(Upper(Omega1(R33,10^(16),10^(33)),digits),Upper(Omega2(R33,S33,10^(16),10^(33)),digits))}.
\end{align*}

\begin{align*}
  \max_{\substack{\ve\in[0,1/\sage{S41}]\\X\geq 10^{33}}}S_\ve(X)
   &\leq\max\{\Omega_1[\sage{R41};10^{33},\infty],\Omega_2[\sage{R41},\sage{S41};10^{33},\infty]\}\\
   &\quad\leq\max\{\sage{Upper(Omega1(R41,10^(33),10^(50)),digits)},\sage{Upper(Omega2(R41,S41,10^(33),10^(50)),digits)}\}
   =\sage{max(Upper(Omega1(R41,10^(33),10^(50)),digits),Upper(Omega2(R41,S41,10^(33),10^(50)),digits))},\\
  \max_{\substack{\ve\in[0,1/\sage{S42}]\\X\geq 10^{33}}}S_\ve(X)
   &\leq\max\{\Omega_1[\sage{R42};10^{33},\infty],\Omega_2[\sage{R42},\sage{S42};10^{33},\infty]\}\\
   &\quad\leq\max\{\sage{Upper(Omega1(R42,10^(33),10^(50)),digits)},\sage{Upper(Omega2(R42,S42,10^(33),10^(50)),digits)}\}
   =\sage{max(Upper(Omega1(R42,10^(33),10^(50)),digits),Upper(Omega2(R42,S42,10^(33),10^(50)),digits))},\\
  \max_{\substack{\ve\in[0,1/\sage{S43}]\\X\geq 10^{33}}}S_\ve(X)
   &\leq\max\{\Omega_1[\sage{R43};10^{33},\infty],\Omega_2[\sage{R43},\sage{S43};10^{33},\infty]\}\\
   &\quad\leq\max\{\sage{Upper(Omega1(R43,10^(33),10^(50)),digits)},\sage{Upper(Omega2(R43,S43,10^(33),10^(50)),digits)}\}
   =\sage{max(Upper(Omega1(R43,10^(33),10^(50)),digits),Upper(Omega2(R43,S43,10^(33),10^(50)),digits))}.
\end{align*}

  \end{proofbold}


\section{Proofs of the auxiliary results}\label{auxiliaries}

\begin{proofbold}[Lemma \ref{Aux1}] Let $h\in\{\mathds{1},g_0,g_0^2\}$. Observe that, for every sufficiently large prime $p$,
\begin{equation*}
f_h(p)=\frac{\varphi(p)h(p)}{\varphi_{\frac{1}{2}}(p)^2}=\frac{p-1}{(\sqrt{p}-1)^2}=1+\frac{2}{\sqrt{p}-1}=1+O\bigg(\frac{1}{\sqrt{p}}\bigg)
\end{equation*}
Note that $h$ is meaningful only at the prime $p=2$. We use Theorem \ref{general} with $q=1$, $(\alpha,\beta)=(0,1/2)$ and the shortcuts $H_{f'}^1(0)=H_{f'}(0)$,  $\Delta_{1}^{\delta}=\Delta^{\delta}$ and $\overline{H}_{f'}^{\phantom{.}1}(-\delta)=\overline{H}_{f'}(-\delta)$. We select $\delta=5/12\in(0,1/2)$. Hence
 \begin{equation}\label{expression}
 \Sigma_1(X)=H_{f'}(0)\ X+O^*\bigg(\frac{(2-\delta)\Delta_{1}^{\delta}\overline{H}_{f'}(-\delta)}{1-\delta}\ X^{1-\delta}\bigg),
 \end{equation} 
 where 
\begin{align*}
 &H_{f'}(0)=\bigg(1-\frac{1-f_h(2)}{2}-\frac{f_h(2)}{2^{2}}  \bigg)\prod_{p\geq 3}\bigg(1+\frac{2p-\sqrt{p}-1}{p^2(\sqrt{p}-1)} \bigg),\\
 &\qquad 1-\frac{1-f_{\mathds{1}}(2)}{2}-\frac{f_{\mathds{1}}(2)}{2^{2}} \leq\sage{Upper(Fx1_2,digits)},\qquad
 1-\frac{1-f_{g_0}(2)}{2}-\frac{f_{g_0}(2)}{2^{2}} \leq\sage{Upper(Fx2_2,digits)},\\
 &\qquad\qquad\qquad\quad 1-\frac{1-f_{g_0^2}(2)}{2}-\frac{f_{g_0^2}(2)}{2^{2}} \leq\sage{Upper(F1_2,digits)},
 \end{align*}
 \begin{align*}
 &\overline{H}_{f'}(-5/12)=\bigg(1+\frac{|1-f_{h}(2)|}{2^{7/12}}+\frac{|f_{h}(2)|}{2^{7/6}}\bigg)\cdot\prod_{p\geq 3}\bigg(1+\frac{2p^{7/12}+\sqrt{p}+1}{p^{7/6}(\sqrt{p}-1)}\bigg),\\
  &\qquad 1+\frac{|1-f_{\mathds{1}}(2)|}{2^{7/12}}+\frac{|f_{\mathds{1}}(2)|}{2^{7/6}} \leq\sage{Upper(Fx1_2_delta2,digits)},\qquad
   1+\frac{|1-f_{g_0}(2)|}{2^{7/12}}+\frac{|f_{g_0}(2)|}{2^{7/6}} \leq\sage{Upper(Fx2_2_delta2,digits)},\\
 &\qquad\qquad\qquad 1+\frac{|1-f_{g_0^2}(2)|}{2^{7/12}}+\frac{|f_{g_0^2}(2)|}{2^{7/6}} \leq\sage{Upper(F1_2_delta2,digits)}.
 \end{align*}
  Recall that $M=10^8$. By definition \eqref{Bk_def} and estimation \eqref{multup}, we obtain
 \begin{align*}
 \prod_{3\leq p\leq M}\bigg(1+\frac{2p-\sqrt{p}-1}{p^2(\sqrt{p}-1)} \bigg)&\in(\sage{Lower(P1_M,digits)},\sage{Upper(P1_M,digits)})\\
\prod_{3\leq p\leq M}\bigg(1+\frac{2p^{7/12}+\sqrt{p}+1}{p^{7/6}(\sqrt{p}-1)}\bigg)
&\in(\sage{Lower(P1_delta2_M,digits)},\sage{Upper(P1_delta2_M,digits)})
\end{align*} 
 Whence
 \begin{align*}
&\prod_{p\geq 3}\bigg(1+\frac{2p-\sqrt{p}-1}{p^2(\sqrt{p}-1)} \bigg)\\
&\qquad\in(\sage{Lower(P1_M,digits)},\sage{Upper(P1_M,digits)})\times\bigg[1, \exp\bigg(\frac{2\cdot B_{3/2}(M)}{1-M^{-1/2}}\bigg)\bigg]\subset(\sage{Lower(P1_M,digits)},\sage{Upper(P1_M*exp(2/(1-M^(-1/2))*B(3/2,M)),digits)}),\nonumber\\
&\prod_{p\geq 3}\bigg(1+\frac{2p^{7/12}+\sqrt{p}+1}{p^{7/6}(\sqrt{p}-1)}\bigg)\\
&\qquad\in(\sage{Lower(P1_delta2_M,digits)},\sage{Upper(P1_delta2_M,digits)})\times\bigg[1, \exp\bigg(\frac{(2+M^{-1/12}+M^{-7/12})\cdot B_{13/12}(M)}{1-M^{-1/2}}\bigg)\bigg]\\
&\qquad\qquad\subset
(\sage{Lower(P1_delta2_M,digits)},\sage{Upper(P1_delta2_M*exp(Cm(delt2)*B(3/2-delt2,M)),digits)}). 
\end{align*} 
Moreover $
\Delta^{5/12}\in(\sage{Lower(A(delt2),digits)},\sage{Upper(A(delt2),digits)})$.
 Thus, from \eqref{expression}, we derive that for any $X>0$,
 \begin{align*}
 \sigma_1(X)&\leq \sage{Upper(Fx1_2,digits)}\times \sage{Upper(P1_M*exp(2/(1-M^(-1/2))*B(3/2,M)),digits)}\ X+19/7\times\sage{Upper(A(delt2),digits)}\times\sage{Upper(Fx1_2_delta2,digits)}\times\sage{Upper(P1_delta2_M*exp(Cm(delt2)*B(3/2-delt2,M)),digits)}\cdot X^{7/12}\\
 &\leq \sage{Upper(Fx1_2*P1_M*exp(2/(1-M^(-1/2))*B(3/2,M)),digits)}\ X+\sage{Upper((2-delt2)/(1-delt2)*A(delt2)*Fx1_2_delta2*P1_delta2_M*exp(Cm(delt2)*B(3/2-delt2,M)),digits)}\cdot X^{7/12},\\
 \sigma_2(X)&\leq \sage{Upper(Fx2_2,digits)}\times \sage{Upper(P1_M*exp(2/(1-M^(-1/2))*B(3/2,M)),digits)}\ X+19/7\times\sage{Upper(A(delt2),digits)}\times\sage{Upper(Fx2_2_delta2,digits)}\times\sage{Upper(P1_delta2_M*exp(Cm(delt2)*B(3/2-delt2,M)),digits)}\cdot X^{7/12}\\
 &\leq \sage{Upper(Fx2_2*P1_M*exp(2/(1-M^(-1/2))*B(3/2,M)),digits)}\ X+\sage{Upper((2-delt2)/(1-delt2)*A(delt2)*Fx2_2_delta2*P1_delta2_M*exp(Cm(delt2)*B(3/2-delt2,M)),digits)}\cdot X^{7/12},\\
 \Sigma_1(X)&\leq \sage{Upper(F1_2,digits)}\times \sage{Upper(P1_M*exp(2/(1-M^(-1/2))*B(3/2,M)),digits)}\ X+19/7\times\sage{Upper(A(delt2),digits)}\times\sage{Upper(F1_2_delta2,digits)}\times\sage{Upper(P1_delta2_M*exp(Cm(delt2)*B(3/2-delt2,M)),digits)}\cdot X^{7/12}\\
 &\leq \sage{Upper(F1_2*P1_M*exp(2/(1-M^(-1/2))*B(3/2,M)),digits)}\ X+\sage{Upper((2-delt2)/(1-delt2)*A(delt2)*F1_2_delta2*P1_delta2_M*exp(Cm(delt2)*B(3/2-delt2,M)),digits)}\cdot X^{7/12}.
 \end{align*} 
 Thereupon, if $X\geq 4\cdot 10^9$, then
 \begin{align}\label{AuxThres1}
 \sigma_1(X)&\leq X(\sage{Upper(Fx1_2*P1_M*exp(2/(1-M^(-1/2))*B(3/2,M)),digits)}+\sage{Upper((2-delt2)/(1-delt2)*A(delt2)*Fx1_2_delta2*P1_delta2_M*exp(Cm(delt2)*B(3/2-delt2,M)),digits)}\times(4\cdot 10^9)^{-5/12})\leq
 \sage{Upper(Fx1_2*P1_M*exp(2/(1-M^(-1/2))*B(3/2,M))+(2-delt2)/(1-delt2)*A(delt2)*Fx1_2_delta2*P1_delta2_M*exp(Cm(delt2)*B(3/2-delt2,M))*T^(-delt2),digits)}\cdot X,\\
 \label{Aux2Thres1}
 \sigma_2(X)&\leq X(\sage{Upper(Fx2_2*P1_M*exp(2/(1-M^(-1/2))*B(3/2,M)),digits)}+\sage{Upper((2-delt2)/(1-delt2)*A(delt2)*Fx2_2_delta2*P1_delta2_M*exp(Cm(delt2)*B(3/2-delt2,M)),digits)}\times(4\cdot 10^9)^{-5/12})\leq
 \sage{Upper(Fx2_2*P1_M*exp(2/(1-M^(-1/2))*B(3/2,M))+(2-delt2)/(1-delt2)*A(delt2)*Fx2_2_delta2*P1_delta2_M*exp(Cm(delt2)*B(3/2-delt2,M))*T^(-delt2),digits)}\cdot X,\\
 \label{Thres1}
 \Sigma_1(X)&\leq X(\sage{Upper(F1_2*P1_M*exp(2/(1-M^(-1/2))*B(3/2,M)),digits)}+\sage{Upper((2-delt2)/(1-delt2)*A(delt2)*F1_2_delta2*P1_delta2_M*exp(Cm(delt2)*B(3/2-delt2,M)),digits)}\times(4\cdot 10^9)^{-5/12})\leq
 \sage{Upper(F1_2*P1_M*exp(2/(1-M^(-1/2))*B(3/2,M))+(2-delt2)/(1-delt2)*A(delt2)*F1_2_delta2*P1_delta2_M*exp(Cm(delt2)*B(3/2-delt2,M))*T^(-delt2),digits)}\cdot X
 \end{align} 
  Finally, observe that, for any positive arithmetic function $F$ and any real number $X>1$,
 \begin{equation}\label{maxx}
 M_F(X):=\frac{1}{X}\sum_{\ell\leq X}F(\ell)= \frac{1}{X}\sum_{\ell\leq [X]}F(\ell)\leq \frac{1}{[X]}\sum_{\ell\leq [X]}F(\ell),
 \end{equation}
 so the maximum of the function $M_F$ is always achieved at integer values of $X$. A numerical inspection for $X\in[1,4\cdot 10^9]$ tells us that $M_{f_h}$ has a maximum at $\ell=42$ with
  \begin{align}\label{AuxMax1}
 \max_{X\in[1,4\cdot 10^9]\cap\mathbb{N}}M_{f_{\mathds{1}}}(X)&=M_{f_{\mathds{1}}}(42)\leq\sage{Upper(Fmx1,digits)}.\\
 \label{Aux2Max1}
 \max_{X\in[1,4\cdot 10^9]\cap\mathbb{N}}M_{f_{g_0}}(X)&=M_{f_{g_0}}(42)\leq\sage{Upper(Fmx2,digits)}.\\
 \label{Max1}
 \max_{X\in[1,4\cdot 10^9]\cap\mathbb{N}}M_{f_{g_0^2}}(X)&=M_{f_{g_0^2}}(42)\leq\sage{Upper(mx1,digits)}.
 \end{align}
 By combining \eqref{AuxThres1} and \eqref{AuxMax1}, \eqref{Aux2Thres1} and \eqref{Aux2Max1}, \eqref{Thres1} and \eqref{Max1}, respectively, we conclude the result.
 \end{proofbold} 
\newline

 
\begin{proofbold}[Lemma \ref{Aux2}] Observe that, for every sufficiently large prime $p$,
\begin{equation*}
f(p)=\frac{\varphi(p)}{p}\bigg[g_0(p)\frac{\sqrt{p}}{\varphi_{\frac{1}{2}}(p)}\bigg]\bigg[g_2(p)\frac{p^\xi}{\varphi_{\xi}(p)}\bigg]
 =1+\frac{ p^{\xi-1/2}+1}{p^\xi-1}=1+O\bigg(\frac{1}{\sqrt{p}}\bigg)
\end{equation*}
Note that $g_0$ and $g_2$ are only meaningful at the prime $p=2$. Thus we may use Theorem \ref{general} with $q=1$, $(\alpha,\beta)=(0,1/2)$ and $\delta=1/3\in(0,1/2)$. Considering \eqref{expression}, we obtain in this case
 \begin{align*}
 H_{f'}(0)&\leq\sage{Upper(F2_2,digits)}\prod_{p\geq 3}\bigg(1+\frac{p^{\xi+1/2}+p-p^\xi-p^{\xi-1/2}}{p^2(p^\xi-1)}\bigg),\\
 \overline{H}_{f'}(-1/3)&\leq\sage{Upper(F2_2_delta,digits)}\prod_{p\geq 3}\bigg(1+\frac{p^{1/6+\xi}+p^\xi+p^{2/3}+p^{\xi-1/2}}{p^{4/3}(p^\xi-1)}\bigg).
 \end{align*} 
where
\begin{align*}
 &\prod_{p\geq 3}\bigg(1+\frac{p^{\xi+1/2}+p-p^\xi-p^{\xi-1/2}}{p^2(p^\xi-1)}\bigg)\\
 &\qquad\in (\sage{Lower(P2,digits)},\sage{Upper(P2,digits)})\times\bigg[1,\exp\bigg(\frac{(1+M^{1/2-\xi})\cdot B_{3/2}(M)}{1-M^{-\xi}}\bigg)\bigg]\subset(\sage{Lower(P2,digits)},\sage{Upper(P2*exp(cm0_2*B(3/2,M)),digits)}),\\
 &\prod_{p\geq 3}\bigg(1+\frac{p^{1/6+\xi}+p^\xi+p^{2/3}+p^{\xi-1/2}}{p^{4/3}(p^\xi-1)}\bigg)\\
 &\qquad\in(\sage{Lower(P2_delta,digits)},\sage{Upper(P2_delta,digits)})
\times\bigg[1,\exp\bigg(\frac{(1+M^{-1/6}+M^{1/2-\xi}+M^{-2/3})\cdot B_{7/6}(M)}{1-M^{-\xi}}\bigg)\bigg]\\
&\qquad \ \subset(\sage{Lower(P2_delta,digits)},\sage{Upper(P2_delta*exp(cmd_2*B(7/6,M)),digits)}).
  \end{align*} 
  Moreover
  \begin{equation}\label{Delta3}
\Delta^{1/3}\in(\sage{Lower(A(delt),digits)},\sage{Upper(A(delt),digits)})
 \end{equation} 
 Thus, by recalling \eqref{expression}, we obtain
 \begin{align*}
 Z_1(X)&\leq\sage{Upper(F2_2,digits)}\times\sage{Upper(P2*exp(cm0_2*B(3/2,M)),digits)}\cdot X+5/2\times\sage{Upper(A(delt),digits)}\times\sage{Upper(F2_2_delta,digits)}\times\sage{Upper(P2_delta*exp(cmd_2*B(7/6,M)),digits)}\cdot X^{2/3}\\
 &\leq X(\sage{Upper(F2_2*P2*exp(cm0_2*B(3/2,M)),digits)}+\sage{Upper(5/2*A(delt)*F2_2_delta*P2_delta*exp(cmd_2*B(7/6,M)),digits)}\cdot X^{-1/3}).
 \end{align*}
Thereupon, for any $X\geq 10^7$,
\begin{align}\label{Thres2}
 Z_1(X)&\leq \sage{Upper(F2_2*P2*exp(cm0_2*B(3/2,M))+5/2*A(delt)*F2_2_delta*P2_delta*exp(cmd_2*B(7/6,M))*T2^-delt,digits)}\cdot X.
 \end{align}
 By recalling \eqref{maxx}, a numerical inspection for $X\in[1,10^7]$ tells us that 
 \begin{equation}\label{Max2}
 \max_{X\in[1,10^7]\cap\mathbb{N}}\frac{Z_1(X)}{X}=\frac{Z_1(6)}{6}\leq\sage{Upper(mx2,digits)}.
 \end{equation}
  By combining \eqref{Thres2} and \eqref{Max2}, we conclude the result.
\end{proofbold}
\newline


\begin{proofbold}[Corollary \ref{Cor1}]
    Let $Y=\min\{D, X/10^{12}\}$. Then
  \begin{equation*}
    \frac{\text{d}}{\text{d}t}\bigg(\frac{1}{\sqrt{t}\log(X/t)}\bigg)=
    -\frac{1}{2t^{3/2}\log (X/t)}+\frac{1}{t^{3/2}\log^2 (X/t)},
  \end{equation*}
  which is negative when $X/t\ge 10^{12}$.
  Hence, by Lemma~\ref{Aux2} and summation by parts, 
  \begin{align*}
   \Sigma_2(X)
    &\le \int_1^Y\frac{\sage{Upper(TH2,digits)} (\log (X/t)-2)}{2\sqrt{t}\log^2(X/t)}dt
    + \frac{\sage{Upper(TH2,digits)} \sqrt{D}}{\log(10^{12})}
    \\&\ \leq\sage{Upper(TH2,digits)}\cdot \sqrt{X}\int_{X/Y}^X  \frac{\log u-2}{2u^{3/2}\log^2u}du
    + \sage{Upper(TH2/log(10)/12,digits)}\cdot \sqrt{D}
    \\&	\ \ \leq\sage{Upper(TH2,digits)}\cdot \sqrt{X}\bigg[\frac{-1}{\sqrt{t}\log t}\bigg|_{X/Y}^X
    + \sage{Upper(TH2/log(10)/12,digits)}\cdot \sqrt{D}\\
    &\ \ \leq
    \sage{Upper(TH2/12/log(10),digits)}\cdot \sqrt{Y} + \sage{Upper(TH2/log(10)/12,digits)}\cdot \sqrt{D}
    \ \ \leq\sage{Upper(TH2/12/log(10)+TH2/log(10)/12,digits)}\cdot \sqrt{D},
  \end{align*}
where we have used that $X/Y\geq 10^{12}$. The result follows readily.
\end{proofbold}
\newline


\begin{proofbold}[Lemma \ref{Aux3}] Observe that, for every sufficiently large prime $p$,
\begin{equation*}
\frac{\varphi(p)}{p}\bigg[g_2(p)\frac{p^\xi}{\varphi_{\xi}(p)}\bigg]^2=\frac{(p-1)p^{2\xi-1}}{(p^\xi-1)^2}=1+\frac{2p^\xi-p^{2\xi-1}-1}{p^{2\xi}-2p^{\xi}+1}=1+O\bigg(\frac{1}{p^{\xi}}\bigg).
\end{equation*}
Note that $g_2$ is only meaningful at the prime $p=2$. Thus $(\alpha,\beta)=(0,\xi)$ and, since $\beta-\alpha>1/2$, we can use the more efficient Theorem \ref{general++}$\mathbf{[B]}$ with $q=1$. Hence
 \begin{equation}\label{expression2}
Z_2(X)=H_{f'}(0)\ X+O^*(3\mathrm{w'}\mathrm{P}\ \sqrt{X}),
 \end{equation} 
where 
\begin{align*}
H_{f'}(0)&=H_{f'}^1(0)=\sage{Upper(F3_2,digits)}\prod_{p\geq 3}\bigg(1+\frac{2p^{1+\xi}-2p^{2\xi}-p+p^{2\xi-1}}{p^2(p^\xi-1)^2}\bigg),\\
\mathrm{w'}&=\mathrm{w'}_{0}^1= 
\bigg(\frac{\sqrt{2}-1}{\sqrt{2}-1+| f(2)-1|}\bigg)\bigg(\mathrm{E}_1^{(1)}+\frac{| f(2)-1|\ \mathrm{E}_1^{(2)}}{\sqrt{2}-1}\bigg)\in(\sage{Lower(W,digits)},\sage{Upper(W,digits)}),\\
\mathrm{P}&=\mathrm{P}_0\leq\sage{Upper(F3_2_spec,digits)}\prod_{p\geq 3}\bigg(1+\frac{|f(p)-1|}{\sqrt{p}-1}\bigg)=\sage{Upper(F3_2_spec,digits)}\prod_{p\geq 3}\bigg(1+\frac{2p^\xi-p^{2\xi-1}-1}{(p^\xi-1)^2(\sqrt{p}-1)}\bigg).
\end{align*} 
Observe that
\begin{align*}
&\prod_{p\geq 3}\bigg(1+\frac{2p^{1+\xi}-2p^{2\xi}-p+p^{2\xi-1}}{p^2(p^\xi-1)^2}\bigg)\\
&\quad\in(\sage{Lower(P3,digits)},\sage{Upper(P3,digits)})\times\bigg[1,\exp\bigg(\frac{(2+M^{\xi-2})\cdot B_{1+\xi}(M)}{(1-M^{-\xi})^2}\bigg)\bigg]\subset(\sage{Lower(P3,digits)},\sage{Upper(P3*exp(cm0_3*B(1+xi,M)),digits)}),\\
&\prod_{p\geq 3}\bigg(1+\frac{2p^\xi-p^{2\xi-1}-1}{(p^\xi-1)^2(\sqrt{p}-1)}\bigg)\\
&\quad\in(\sage{Lower(P3_spec,digits)},\sage{Upper(P3_spec,digits)})\times\bigg[1,\exp\bigg(\frac{2\cdot B_{1/2+\xi}(M)}{(1-M^{-\xi})^2(1-M^{-1/2})}\bigg)\bigg]
\subset(\sage{Lower(P3_spec,digits)},\sage{Upper(P3_spec*exp(cmd_3*B(1/2+xi,M)),digits)}).
\end{align*} 
Thus, from \eqref{expression2}, we obtain
 \begin{align*}
Z_2(X)&\leq\sage{Upper(F3_2,digits)}\times\sage{Upper(P3*exp(cm0_3*B(1+xi,M)),digits)}\cdot X+3\times\sage{Upper(W,digits)}\times\sage{Upper(P3_spec*exp(cmd_3*B(1/2+xi,M)),digits)}\cdot \sqrt{X}\\
&\leq X(\sage{Upper(F3_2*P3*exp(cm0_3*B(1+xi,M)),digits)}+ \sage{Upper(3*W*P3_spec*exp(cmd_3*B(1/2+xi,M)),digits)}\cdot X^{-1/2}),
\end{align*} 
so that, for any $X\geq 10^6$,
\begin{align}\label{Thres3}
Z_2(X)\leq \sage{Upper(F3_2*P3*exp(cm0_3*B(1+xi,M))+3*W*P3_spec*exp(cmd_3*B(1/2+xi,M))*T3^(-1/2),digits)}\cdot X ,
\end{align} 
 By recalling \eqref{maxx}, a quick numerical inspection for $X\in[1,10^6]$ tells us that 
 \begin{equation}\label{Max3}
 \max_{X\in[1,10^6]\cap\mathbb{N}}\frac{Z_2(X)}{X}=\frac{Z_2(2)}{2}\leq\sage{Upper(mx3,digits)}.
 \end{equation}
  By combining \eqref{Thres3} and \eqref{Max3}, we conclude the result.
\end{proofbold}
\newline


\begin{proofbold}[Corollary \ref{Cor2}]
  Let $Y=\min\{D, X/10^{12}\}$. Then
  \begin{equation*}
    \frac{\text{d}}{\text{d}t}\bigg(\frac{1}{t\log(X/t)^2}\bigg)=
    -\frac{1}{t^2\log^2(X/t)}+\frac{2}{t^2\log^3 (X/t)}
  \end{equation*}
  which is negative when $X/t\ge 10^{12}$.
  Hence, by Lemma~\ref{Aux3} and summation by parts,  
  \begin{align*}
    \Sigma_3(X)
    &\le\sage{Upper(TH3,digits)}\ \int_1^Y  \frac{\log (X/t)-2}{t\log^3(X/t)}dt
    + \frac{\sage{Upper(TH3,digits)}}{\log(X/Y)^2}
    \\&\ \le \sage{Upper(TH3,digits)}\ \int_{X/Y}^X  \frac{\log u-1}{u\log^3u}du
    + \frac{\sage{Upper(TH3,digits)}}{\log^2(10^{12})}\\
    &\ \ \le \sage{Upper(TH3,digits)}\bigg[-\frac{1}{\log t}+\frac{1}{\log^2t}\bigg|_{X/Y}^X
    + \sage{Upper(TH3/12^2/log(10)^2,digits)}\\
&\ \ \ \le \frac{\sage{Upper(TH3,digits)}}{\log(X/Y)} -\frac{\sage{Upper(TH3,digits)}}{\log^2(X/Y)}
    + \sage{Upper(TH3/12^2/log(10)^2,digits)}
    \le \sage{Upper(TH3/12/log(10)-TH3/12^2/log(10)^2+TH3/12^2/log(10)^2,digits)}
      \end{align*}
  where we have used that $X/Y\geq 10^{12}$ and that the function $t\mapsto 1/\log t-1/\log^2 t$ is decreasing for $t\geq e^2$. The result follows readily.
\end{proofbold}
\newline

\begin{proofbold}[Lemma \ref{auxnu2}]
   Let $h\in\{\mathds{1},g_0\}$. Observe that, for every sufficiently large prime $p$,
\begin{equation*}
 f_h(p)=\frac{\varphi(p)}{p^{3/2}}\bigg[h(p)\frac{\sqrt{p}}{\varphi_{\frac{1}{2}}(p)}\bigg]= \frac{\sqrt{p}+1}{p}=\frac{1}{\sqrt{p}}+O\bigg(\frac{1}{p}\bigg)
 \end{equation*}
Note that $h$ is only meaningful at the prime $p=2$. Thus we may apply Theorem \ref{general} with $(\alpha,\beta)=(1/2,1)$, $q=1$ and $\delta=1/3\in(0,1/2)$. We obtain 
\begin{equation*}
 \Xi_h(X)=2H_{f'}(0)\sqrt{X}+O^*\bigg(4\Delta_{1}^{1/3}\overline{H}_{f'}(-1/3)\ X^{1/6}\bigg),
 \end{equation*} 
Considering \eqref{expression}, we obtain in this case
 \begin{align*}
 H_{f'}(0)&=\bigg(1-\frac{1-f_h(2)\sqrt{2}}{2}-\frac{f_h(2)\sqrt{2}}{2^2}\bigg)\prod_{p\geq 3}\bigg(1+\frac{p-\sqrt{p}-1}{p^{5/2}}\bigg),\\
 \overline{H}_{f'}(-1/3)&=\bigg(1+\frac{|1-f_h(2)\sqrt{2}|}{2^{2/3}}+\frac{|f_h(2)\sqrt{2}|}{2^{4/3}}\bigg)\prod_{p\geq 3}\bigg(1+\frac{p^{2/3}+\sqrt{p}+1}{p^{11/6}}\bigg).
 \end{align*} 
 where
 \begin{align*}
 1-\frac{1-f_h(2)\sqrt{2}}{2}-\frac{f_h(2)\sqrt{2}}{2^2}&\leq\begin{cases}\sage{Upper(F1xxx_2,digits)}\text{ if }h=\mathds{1},\\
 \sage{Upper(F1x_2,digits)}\text{ if }h=g_0,
 \end{cases}\\
 1+\frac{|1-f_h(2)\sqrt{2}|}{2^{2/3}}+\frac{|f_h(2)\sqrt{2}|}{2^{4/3}}&\leq\begin{cases}\sage{Upper(F1xxx_2_delta,digits)}\text{ if }h=\mathds{1},\\
 \sage{Upper(F1x_2_delta,digits)}\text{ if }h=g_0.
 \end{cases}
 \end{align*}
 Moreover
    \begin{align*}
  \prod_{p\geq 3}\bigg(1+\frac{p-\sqrt{p}-1}{p^{5/2}}\bigg)&\in(\sage{Lower(P1x,digits)},\sage{Upper(P1x,digits)})\times[1,\exp(B_{3/2}[
  M]))
  \subset(\sage{Lower(P1x,digits)},\sage{Upper(P1x*exp(B(3/2,M)),digits)}),\\
 \prod_{p\geq 3}\bigg(1+\frac{p^{2/3}+\sqrt{p}+1}{p^{11/6}}\bigg)&\in(\sage{Lower(P1x_delta,digits)},\sage{Upper(P1x_delta,digits)})\times[1,\exp((1+M^{-1/6}+M^{-2/3})\cdot B_{7/6}(M)))\\
 &\qquad\subset(\sage{Lower(P1x_delta,digits)},\sage{Upper(P1x_delta*exp((1+M^(-1/6)+M^(-2/3))*B(7/6,M)),digits)}).
 \end{align*} 
 All in all, for any $X>0$, by recalling \eqref{Delta3}, we obtain
 \begin{align*}
 \Xi_1(X)&\leq \sqrt{X}\ (2\times\sage{Upper(F1xxx_2,digits)}\times\sage{Upper(P1x*exp(B(3/2,M)),digits)}+ 4\times\sage{Upper(A(delt),digits)}\times\sage{Upper(F1xxx_2_delta,digits)}\times\sage{Upper(P1x_delta*exp((1+M^(-1/6)+M^(-2/3))*B(7/6,M)),digits)}\cdot X^{-1/3})\\
 &\leq\sqrt{X}\ ( \sage{Upper(2*F1xxx_2*P1x*exp(B(3/2,M)),digits)}+\sage{Upper(4*A(delt)*F1xxx_2_delta*P1x_delta*exp((1+M^(-1/6)+M^(-2/3))*B(7/6,M)),digits)}\cdot X^{-1/3}),\\
 \Xi_2(X)&\leq \sqrt{X}\ (2\times\sage{Upper(F1x_2,digits)}\times\sage{Upper(P1x*exp(B(3/2,M)),digits)}+ 4\times\sage{Upper(A(delt),digits)}\times\sage{Upper(F1x_2_delta,digits)}\times\sage{Upper(P1x_delta*exp((1+M^(-1/6)+M^(-2/3))*B(7/6,M)),digits)}\cdot X^{-1/3})\\
 &\leq\sqrt{X}\ ( \sage{Upper(2*F1x_2*P1x*exp(B(3/2,M)),digits)}+\sage{Upper(4*A(delt)*F1x_2_delta*P1x_delta*exp((1+M^(-1/6)+M^(-2/3))*B(7/6,M)),digits)}\cdot X^{-1/3}).
 \end{align*} 
 When $X\geq  10^6$, we thus have
 \begin{align}
 \label{Xi1_est}\Xi_1(X)& \leq\sqrt{X}\ ( \sage{Upper(2*F1xxx_2*P1x*exp(B(3/2,M)),digits)}+\sage{Upper(4*A(delt)*F1xxx_2_delta*P1x_delta*exp((1+M^(-1/6)+M^(-2/3))*B(7/6,M)),digits)}\times (10^6)^{-1/3})\nonumber\\
 &\leq\sage{Upper(2*F1xxx_2*P1x*exp(B(3/2,M))+4*A(delt)*F1xxx_2_delta*P1x_delta*exp((1+M^(-1/6)+M^(-2/3))*B(7/6,M))/(10^6)^delt,digits)}\cdot\sqrt{X}, \\
 \label{Xi2_est}\Xi_2(X)& \leq\sqrt{X}\ ( \sage{Upper(2*F1x_2*P1x*exp(B(3/2,M)),digits)}+\sage{Upper(4*A(delt)*F1x_2_delta*P1x_delta*exp((1+M^(-1/6)+M^(-2/3))*B(7/6,M)),digits)}\times (10^6)^{-1/3})\nonumber\\
 &\leq\sage{Upper(2*F1x_2*P1x*exp(B(3/2,M))+4*A(delt)*F1x_2_delta*P1x_delta*exp((1+M^(-1/6)+M^(-2/3))*B(7/6,M))/(10^6)^delt,digits)}\cdot\sqrt{X}.
 \end{align} 
When $X\in(1,5\cdot 10^6]\cup\mathbb{Z}$, a quick calculation tells us that
\begin{align}
\label{Xi1_max}\max_{X\in(0,5\cdot 10^6]}\frac{\Xi_1(X)}{\sqrt{X}}&=\frac{\Xi_1(978\, 118)}{\sqrt{978\,118}}\leq\sage{Upper(mxxx_aux11,digits)},\\
\label{Xi2_max}\max_{X\in(0,5\cdot 10^6]}\frac{\Xi_2(X)}{\sqrt{X}}&=\frac{\Xi_2(478\,671)}{\sqrt{478\,671}}\leq\sage{Upper(mx_aux11,digits)}.
\end{align}
By combining \eqref{Xi1_est} with \eqref{Xi1_max} and then \eqref{Xi2_est} with \eqref{Xi2_max}, we conclude the result.
 \end{proofbold}
\newline


\begin{proofbold}[Lemma \ref{auxf2}] 
Observe that, for any sufficiently large prime $p$,
\begin{equation*}
f(p)=\frac{\varphi(p)}{p^2}\bigg[g_2(p)\frac{p^\xi}{\varphi_{\xi}(p)}\bigg]=\frac{1}{p}+\frac{p-p^\xi}{p^2(p^\xi-1)}=\frac{1}{p}+O\bigg(\frac{1}{p^{1+\xi}}\bigg).
\end{equation*}
Thus in the language of Theorem \ref{general}, we have $(\alpha,\beta)=(1,1+\xi)$ with $\xi>1/2$. Thus, we can use Theorem \ref{general++}$\mathbf{[A]}$ with $q=1$  and obtain
 \begin{align*}
K_1(X)= F_1(X)+O^*\bigg( \frac{\mathrm{w}_1\ \mathrm{P}_1}{\sqrt{X}}\bigg),
 \end{align*} 
 where
  \begin{equation*}
 F_1(X)=H_{f}(0)\bigg(\log X+\sum_{p}\frac{(1-(p-2)f(p))\log p}{(f(p)+1)(p-1)}+\gamma\bigg),
 \end{equation*}
 \begin{align*}
 H_{f}(0)&=\left(1-\frac{1-2f(2)}{2}-\frac{f(2)}{2}  \right)\prod_{p\geq 3}\bigg(1+\frac{p^{2-\xi}-2p-1}{p^{3-\xi}(p^\xi-1)}\bigg),\\
\mathrm{w}_1&=\sage{Upper(W2_lx,digits)},\\
 \mathrm{P}_1&=\bigg(1+\frac{|2f(2)-1|}{\sqrt{2}-1}\bigg)\prod_{p\geq 3}\bigg(1+\frac{p^{1-\xi}-1}{p^{1-\xi}(p^\xi-1)(\sqrt{p}-1)}\bigg),
 \end{align*}    
Note that
 \begin{align*}
 &1-\frac{1-2f(2)}{2}-\frac{f(2)}{2} \in(\sage{Lower(F2_LX,digits)},\sage{Upper(F2_LX,digits)}),\quad 1+\frac{|2f(2)-1|}{\sqrt{2}-1}\in(\sage{Lower(F2P1_LX,digits)},\sage{Upper(F2P1_LX,digits)}),\\
&\qquad\prod_{3\leq p\leq M}\bigg(1+\frac{p^{2-\xi}-2p-1}{p^{3-\xi}(p^\xi-1)}\bigg)\in(\sage{Lower(P0_LX,digits)},\sage{Upper(P0_LX,digits)}),\\
&\qquad\prod_{3\leq p\leq M}\bigg(1+\frac{p^{1-\xi}-1}{p^{1-\xi}(p^\xi-1)(\sqrt{p}-1)}\bigg)\in(\sage{Lower(P1_LX,digits)},\sage{Upper(P1_LX,digits)}),
 \end{align*}    
 so that
   \begin{align*}
 \prod_{p}\bigg(1+\frac{p^{2-\xi}-2p-1}{p^{3-\xi}(p^\xi-1)}\bigg)\in(\sage{Lower(F2_LX,digits)},\sage{Upper(F2_LX,digits)})\times(\sage{Lower(P0_LX,digits)},\sage{Upper(P0_LX,digits)})\times\bigg[1,\exp\bigg(\frac{B_{1+\xi}(M)}{1-M^{-\xi}}\bigg)\bigg]&\\
 \qquad\subset(\sage{Lower(F2_LX*P0_LX,digits)},\sage{Upper(F2_LX*P0_LX*exp((1-M^(-xi))^(-1)*B(1+xi,M)),digits)}),\phantom{xxxxxxxxxxxxxxxxxxxxxxxxxx}&\\
\prod_{p}\bigg(1+\frac{p^{1-\xi}-1}{p^{1-\xi}(p^\xi-1)(\sqrt{p}-1)}\bigg)\phantom{xxxxxxxxxxxxxxxxxxxxxxxxxxxxxxxxxxxxxx}&\\
\in(\sage{Lower(F2P1_LX,digits)},\sage{Upper(F2P1_LX,digits)})\times(\sage{Lower(P1_LX,digits)},\sage{Upper(P1_LX,digits)})\times\bigg[1,\exp\bigg(\frac{B_{1/2+\xi}(M)}{(1-M^{-\xi})(1-M^{-1/2})}\bigg)\bigg]&\\
\subset(\sage{Lower(F2P1_LX*P1_LX,digits)},\sage{Upper(F2P1_LX*P1_LX*exp((1-M^(-xi))^(-1)*(1-M^(-1/2))^(-1)*B(1/2+xi,M)),digits)}).\phantom{xxxxxxxxxxxxxxxxxxxxxxxxxx}&
 \end{align*}   
 We also have
 \begin{align*}
  \sum_{p\leq M}\frac{(1-(p-2)f(p))\log p}{(f(p)+1)(p-1)}&=\frac{\log 2}{f(2)+1}-\sum_{3<p\leq M}\frac{(p^2-3p^{1+\xi}+2p^\xi)\log p}{(p^\xi(p-1)+p^2(p^\xi-1))(p-1)}\\
  &\qquad\in(\sage{Lower(Sumx-euler_gamma,digits)},\sage{Upper(Sumx-euler_gamma,digits)})
 \end{align*}
 Moreover, by \eqref{Ck_def} and \eqref{sumdown}, we obtain
  \begin{align*}
 \sum_{p>M}\frac{(p^2-3p^{1+\xi}+2p^\xi)\log p}{(p^{2+\xi}-p^2+p^{1+\xi}-p^\xi)(p-1)}&\leq\frac{(1+2M^{\xi-2})\cdot C_{1+\xi}(M)}{(1-M^{-\xi}-M^{-2})(1-M^{-1})},
 \end{align*}
 so that
  \begin{align*}
 &\gamma+ \sum_{p\leq M}\frac{(1-(p-2)f(p))\log p}{(f(p)+1)(p-1)}\\
 &\qquad\in\gamma+(\sage{Lower(Sumx-euler_gamma,digits)},\sage{Upper(Sumx-euler_gamma,digits)})+\bigg[-\frac{(1+2M^{\xi-2})\cdot C_{1+\xi}(M)}{(1-M^{-\xi}-M^{-2})(1-M^{-1})},0\bigg]\\
 &\qquad\qquad\subset(\sage{Lower(Sumx-(1+2*M^(xi-2))/(1-M^(-xi)-M^(-2))*C(1+xi,M),digits)},\sage{Upper(Sumx,digits)}).
 \end{align*}
 Therefore
\begin{align*}
&K_1(X)\in(\sage{Lower(F2_LX*P0_LX,digits)},\sage{Upper(F2_LX*P0_LX*exp((1-M^(-xi))^(-1)*B(1+xi,M)),digits)})\cdot(\log X+(\sage{Lower(Sumx,digits)},\sage{Upper(Sumx,digits)}))+O^*\bigg( \frac{\sage{Upper(W2_lx,digits)}\times\sage{Upper(F2P1_LX*P1_LX*exp((1-M^(-xi))^(-1)*(1-M^(-1/2))^(-1)*B(1/2+xi,M)),digits)}}{\sqrt{X}}\bigg)\phantom{xxxxxxxxxxxxx}\\
\qquad&\subset(\sage{Lower(F2_LX*P0_LX,digits)},\sage{Upper(F2_LX*P0_LX*exp((1-M^(-xi))^(-1)*B(1+xi,M)),digits)})\cdot\log X\\
&\quad+\bigg(\sage{Lower(F2_LX*P0_LX,digits)}\times\sage{Lower(Sumx,digits)}-\frac{\sage{Upper(W2_lx,digits)}\times\sage{Upper(F2P1_LX*P1_LX*exp((1-M^(-xi))^(-1)*(1-M^(-1/2))^(-1)*B(1/2+xi,M)),digits)}}{\sqrt{X}},\sage{Upper(F2_LX*P0_LX*exp((1-M^(-xi))^(-1)*B(1+xi,M)),digits)}\times\sage{Upper(Sumx,digits)}+\frac{\sage{Upper(W2_lx,digits)}\times\sage{Upper(F2P1_LX*P1_LX*exp((1-M^(-xi))^(-1)*(1-M^(-1/2))^(-1)*B(1/2+xi,M)),digits)}}{\sqrt{X}}\bigg).
 \end{align*} 
 If we assume that $X\geq 2\cdot 10^8$, then
   \begin{align*}
K_1(X)&\in(\sage{Lower(F2_LX*P0_LX,digits)},\sage{Upper(F2_LX*P0_LX*exp((1-M^(-xi))^(-1)*B(1+xi,M)),digits)})\cdot\log X+\bigg(\sage{Lower(F2_LX*P0_LX*Sumx,digits)}-\frac{\sage{Upper(W2_lx*F2P1_LX*P1_LX*exp((1-M^(-xi))^(-1)*(1-M^(-1/2))^(-1)*B(1/2+xi,M)),digits)}}{\sqrt{2\cdot 10^8}},\sage{Upper(F2_LX*P0_LX*exp((1-M^(-xi))^(-1)*B(1+xi,M))*Sumx,digits)}+\frac{\sage{Upper(W2_lx*F2P1_LX*P1_LX*exp((1-M^(-xi))^(-1)*(1-M^(-1/2))^(-1)*B(1/2+xi,M)),digits)}}{\sqrt{2\cdot 10^8}}\bigg)\\
&\ \subset(\sage{Lower(F2_LX*P0_LX,digits)},\sage{Upper(F2_LX*P0_LX*exp((1-M^(-xi))^(-1)*B(1+xi,M)),digits)})\cdot\log X+(\sage{Lower(F2_LX*P0_LX*Sumx-W2_lx*F2P1_LX*P1_LX*exp((1-M^(-xi))^(-1)*(1-M^(-1/2))^(-1)*B(1/2+xi,M))/sqrt(N),digits)},\sage{Upper(F2_LX*P0_LX*exp((1-M^(-xi))^(-1)*B(1+xi,M))*Sumx+W2_lx*F2P1_LX*P1_LX*exp((1-M^(-xi))^(-1)*(1-M^(-1/2))^(-1)*B(1/2+xi,M))/sqrt(N),digits)}).
 \end{align*} 
 We complete the proof by direct inspection. Indeed, for any $X\geq 1$, 
  \begin{align}\label{analysis}
 K_1(X)-c_1\log X&=K_1([X])-c_1\log([X])-c_1\log\bigg(\frac{X}{[X]}\bigg)\nonumber\\
 &\ \in K_1([X])-c_1\log([X])+\bigg[-c_1\log\bigg(\frac{[X]+1}{[X]}\bigg),0\bigg]\nonumber\\
 &\ \ \subset K_1([X])-c_1\log([X])+\bigg[-\sage{Upper(uc1,digits)}\log\bigg(\frac{[X]+1}{[X]}\bigg),0\bigg].
 \end{align}
Further,
 \begin{equation*}
 K_1([X])-\sage{Upper(F2_LX*P0_LX*exp((1-M^(-xi))^(-1)*B(1+xi,M)),digits)}\cdot\log [X]\leq K_1([X])-c_1\log [X]\leq K_1([X])-\sage{Lower(F2_LX*P0_LX,digits)}\cdot\log [X].
 \end{equation*}	
 Thus for any $X\geq 1$, we calculate and observe that
\begin{align*} \max_{X\in[1,2\cdot 10^8]} \{K_1([X])-\sage{Lower(F2_LX*P0_LX,digits)}\cdot\log [X]\}&=K_1(3)-\sage{Lower(F2_LX*P0_LX,digits)}\times\log 3\phantom{xxxxxxxxx}\\
&\leq\sage{Upper(logmx,digits)},\\
\min_{X\in[1,2\cdot 10^8]} \bigg\{K_1([X])-\sage{Upper(uc1,digits)}\bigg(\log [X]-\log\bigg(\frac{[X]+1}{[X]}\bigg)\bigg)\bigg\}\\
= K_1(1)-\sage{Upper(uc1,digits)}\times(\log 1-\log 2)&\geq\sage{Lower(logmm,digits)}.
\end{align*}
\end{proofbold}
\newline


\begin{proofbold}[Corollary \ref{auxnu1}]
   Set $Y=X/10^{12}$.
  We readily find that
  \begin{equation*}
    \frac{1}{\log(X/\ell)}
    =
    \frac{1}{12\log10}
    -\int_{\ell}^Y\frac{dt}{t\log^2(X/t)}
  \end{equation*}
  so that 
  \begin{align*}
     L_1(X)       &=
          - \sum_{\ell\le Y}
     \frac{\mu^2(\ell)\varphi(\ell)}{\ell^2}\bigg[g_2(\ell)\frac{\ell^\xi}{\varphi_{\xi}(\ell)}\bigg]
\int_\ell^Y \frac{dt}{t\log^2(X/t)}dt
    +
    \frac{K_1(Y)}{12\log10}\\
    &=
           -\int_1^Y \frac{K_1(t)}{t\log^2(X/t)}dt
    +
    \frac{K_1(Y)}{12\log10},
  \end{align*}
  where the last inequality was obtained by summation by parts and $K_1(X)$ is defined in Lemma \ref{auxf2}. Thereupon,
  Lemma~\ref{auxf2} applies to give the bound 
  \begin{align*}
    L_1(X)
    \le -\int_1^Y \frac{c_1\log t +\sage{Lower(logmm,digits)}}{t\log^2(X/t)}dt
    +
    \frac{c_1\log Y + \sage{Upper(logmx,digits)}}{12\log10}\phantom{xxxxxxxxxxxxxxxxxxxxxx}&
    \\\le - \int_0^{\log Y} \frac{c_1 u +\sage{Lower(logmm,digits)}}{(12\log 10+ \log Y -u)^2}du
    +
    \frac{c_1\log Y + \sage{Upper(logmx,digits)}}{12\log10}\phantom{xxxxxxxxxxxxxx}&
    \\\le - \int_{12\log 10}^{12\log 10+\log Y} \frac{c_1
    (12\log10+\log Y-v) + \sage{Lower(logmm,digits)}}{v^2}dv
    +
     \frac{c_1\log Y + \sage{Upper(logmx,digits)}}{12\log10}\phantom{xxxxx}&
     \\= c_1\log\biggl(1+\frac{\log Y}{12\log 10}\biggr)    +\frac{c_1\log X + \sage{Lower(logmm,digits)}}{\log X}-\frac{c_1\log X + \sage{Lower(logmm,digits)}}{12\log 10}+
     \frac{c_1\log Y + \sage{Upper(logmx,digits)}}{12\log10}&\\
     \\\leq c_1\log\biggl(1+\frac{\log Y}{12\log 10}\biggr)    +c_1+\frac{\sage{Lower(logmm,digits)}}{12\log 10}-\frac{c_1\log X + \sage{Lower(logmm,digits)}}{12\log 10}+
     \frac{c_1\log Y + \sage{Upper(logmx,digits)}}{12\log10}&
     \\= c_1\log\biggl(1+\frac{\log Y}{12\log 10}\biggr) +
     \frac{ \sage{Upper(logmx,digits)}}{12\log10}.\phantom{xxxxxxxxxxxxxxxxxxxxxxxxxxxxxxx}&
      \end{align*}
The result is obtained by recalling that $c_1\leq\sage{Upper(uc1,digits)}$.
\end{proofbold}
\newline


\begin{proofbold}[Lemma \ref{auxf22}]
 Observe that, for any sufficiently large prime $p$,
\begin{equation*}
f(p)=\frac{\varphi(p)}{p^2}\bigg[g_2(p)\frac{p^\xi}{\varphi_{\xi}(p)}\bigg]^2=\frac{1}{p}+\frac{2p^{1+\xi}-p^{2\xi}-p}{p^2(p^\xi-1)^2}=\frac{1}{p}+O\bigg(\frac{1}{p^{1+\xi}}\bigg).
\end{equation*}
Hence, we may use Theorem \ref{general++}$\mathbf{[A]}$ with $q=1$, $(\alpha,\beta)=(1,1+\xi)$ with $\xi>1/2$  and obtain
 \begin{align*}
K_2(X)= F_1(X)+O^*\bigg( \frac{\mathrm{w}_1\ \mathrm{P}_1}{\sqrt{X}}\bigg),
 \end{align*} 
 where $\mathrm{w}_1$ is defined in the proof of Lemma \ref{auxf2} and
  \begin{equation*}
 F_1(X)=H_{f}(0)\bigg(\log X+\sum_{p}\frac{(1-(p-2)f(p))\log p}{(f(p)+1)(p-1)}+\gamma\bigg),
 \end{equation*}
 \begin{align*}
 H_{f}(0)&=\left(1-\frac{1-2f(2)}{2}-\frac{f(2)}{2}  \right)\prod_{p\geq 3}\bigg(1+\frac{2p^{2-\xi}-2p-p^{2-2\xi}+1}{p^{3-2\xi}(p^\xi-1)^2}\bigg)\\ 
 \mathrm{P}_1&=\bigg(1+\frac{|2f(2)-1|}{\sqrt{2}-1}\bigg)\prod_{p\geq 3}\bigg(1+\frac{2p^{\xi}-p^{2\xi-1}-1}{(p^\xi-1)^2(\sqrt{p}-1)}\bigg),
 \end{align*}    
Note that
 \begin{align*}
 1-\frac{1-2f(2)}{2}-\frac{f(2)}{2} \in(\sage{Lower(AF2_LX,digits)},\sage{Upper(AF2_LX,digits)}),&\quad 1+\frac{|2f(2)-1|}{\sqrt{2}-1}\in(\sage{Lower(AF2P1_LX,digits)},\sage{Upper(AF2P1_LX,digits)}),\\
 \prod_{p\geq 3}\bigg(1+\frac{2p^{2-\xi}-2p-p^{2-2\xi}+1}{p^{3-2\xi}(p^\xi-1)^2}\bigg)&\in(\sage{Lower(AP0_LX,digits)},\sage{Upper(AP0_LX,digits)}),\\
\prod_{p\geq 3}\bigg(1+\frac{2p^{\xi}-p^{2\xi-1}-1}{(p^\xi-1)^2(\sqrt{p}-1)}\bigg)&\in(\sage{Lower(AP1_LX,digits)},\sage{Upper(AP1_LX,digits)}).
 \end{align*}    
 Therefore
   \begin{align*}
& \prod_{p}\bigg(1+\frac{2p^{2-\xi}-2p-p^{2-2\xi}+1}{p^{3-2\xi}(p^\xi-1)^2}\bigg)\\&
\qquad\in(\sage{Lower(AF2_LX,digits)},\sage{Upper(AF2_LX,digits)})\times(\sage{Lower(AP0_LX,digits)},\sage{Upper(AP0_LX,digits)})\times\bigg[1,\exp\bigg(\frac{(2+M^{-(2-\xi)})\cdot B_{1+\xi}(M)}{(1-M^{-\xi})^2}\bigg)\bigg]\\
&\qquad\qquad\subset(\sage{Lower(AF2_LX*AP0_LX,digits)},\sage{Upper(AF2_LX*AP0_LX*exp((2+M^(xi-2))*(1-M^(-xi))^(-2)*B(1+xi,M)),digits)}),\\
&\prod_{p}\bigg(1+\frac{2p^{\xi}-p^{2\xi-1}-1}{(p^\xi-1)^2(\sqrt{p}-1)}\bigg)\\
&\qquad\in(\sage{Lower(AF2P1_LX,digits)},\sage{Upper(AF2P1_LX,digits)})\times(\sage{Lower(AP1_LX,digits)},\sage{Upper(AP1_LX,digits)})\times\bigg[1,\exp\bigg(\frac{2\cdot B_{1/2+\xi}(M)}{(1-M^{-\xi})^2(1-M^{-1/2})}\bigg)\bigg]\\
&\qquad\qquad\subset(\sage{Lower(AF2P1_LX*AP1_LX,digits)},\sage{Upper(AF2P1_LX*AP1_LX*exp(2*(1-M^(-xi))^(-2)*(1-M^(-1/2))^(-1)*B(1/2+xi,M)),digits)}).
 \end{align*}   
 We also have
 \begin{align*}
 \gamma+\sum_{p}\frac{(1-(p-2)f(p))\log p}{(f(p)+1)(p-1)}\phantom{xxxxxxxxxxxxxxxxxxxxxxxxxxxxxxxxx}&\\
 \qquad\qquad\in\gamma+\frac{\log 2}{f(2)+1}-\sum_{3\leq p\leq M}\frac{(2p^{2+\xi}-3p^{1+2\xi}-p^2+2p^{2\xi})\log p}{(p^{2+2\xi}-2p^{2+\xi}+p^{1+2\xi}+p^2-p^{2\xi})(p-1)}&\\
 +\bigg[-\frac{(2+2M^{\xi-2})\ C_{1+\xi}(M)}{(1-2M^{-\xi}-M^{-2})(1-M^{-1})},0\bigg]&\\
 \subset(\sage{Lower(ASumx-2*(1+M^(xi-2))/(1-2*M^(-xi)-M^(-2))/(1-M^(-1))*C(1+xi,M),digits)},\sage{Upper(
 ASumx,digits)}).\phantom{xxxxxxxxxxxxxxxxxxxxxxxxxxxxxxxxx}&
 \end{align*} 
 All in all, we conclude that 
  \begin{align*}
K_2(X)\in(&\sage{Lower(AF2_LX*AP0_LX,digits)},\sage{Upper(AF2_LX*AP0_LX*exp((2+M^(xi-2))*(1-M^(-xi))^(-2)*B(1+xi,M)),digits)})\cdot(\log X+(\sage{Lower(ASumx,digits)},\sage{Upper(ASumx,digits)}))+O^*\bigg( \frac{\sage{Upper(W2_lx,digits)}\times\sage{Upper(AF2P1_LX*AP1_LX*exp(2*(1-M^(-xi))^(-2)*(1-M^(-1/2))^(-1)*B(1/2+xi,M)),digits)}}{\sqrt{X}}\bigg)\phantom{xxxxx}\\
&\ \subset(\sage{Lower(AF2_LX*AP0_LX,digits)},\sage{Upper(AF2_LX*AP0_LX*exp((2+M^(xi-2))*(1-M^(-xi))^(-2)*B(1+xi,M)),digits)})\cdot\log X\\
+\bigg(\sage{Lower(AF2_LX*AP0_LX,digits)}&\times\sage{Lower(ASumx,digits)}-\frac{\sage{Upper(W2_lx,digits)}\times\sage{Upper(AF2P1_LX*AP1_LX*exp(2*(1-M^(-xi))^(-2)*(1-M^(-1/2))^(-1)*B(1/2+xi,M)),digits)}}{\sqrt{X}},\sage{Upper(AF2_LX*AP0_LX*exp((2+M^(xi-2))*(1-M^(-xi))^(-2)*B(1+xi,M)),digits)}\times\sage{Upper(ASumx,digits)}+\frac{\sage{Upper(W2_lx,digits)}\times\sage{Upper(AF2P1_LX*AP1_LX*exp(2*(1-M^(-xi))^(-2)*(1-M^(-1/2))^(-1)*B(1/2+xi,M)),digits)}}{\sqrt{X}}\bigg).
 \end{align*} 
 If we assume that $X\geq 2\cdot 10^8$, then
 \begin{align*}
K_2(X)&\in(\sage{Lower(AF2_LX*AP0_LX,digits)},\sage{Upper(AF2_LX*AP0_LX*exp((2+M^(xi-2))*(1-M^(-xi))^(-2)*B(1+xi,M)),digits)})\cdot\log X+\bigg(\sage{Lower(AF2_LX*AP0_LX*ASumx,digits)}-\frac{\sage{Upper(W2_lx*AF2P1_LX*AP1_LX*exp(2*(1-M^(-xi))^(-2)*(1-M^(-1/2))^(-1)*B(1/2+xi,M)),digits)}}{\sqrt{2\cdot 10^8}},\sage{Upper(AF2_LX*AP0_LX*exp((2+M^(xi-2))*(1-M^(-xi))^(-2)*B(1+xi,M))*ASumx,digits)}+\frac{\sage{Upper(W2_lx*AF2P1_LX*AP1_LX*exp(2*(1-M^(-xi))^(-2)*(1-M^(-1/2))^(-1)*B(1/2+xi,M)),digits)}}{\sqrt{2\cdot 10^8}}\bigg)\\
&\subset(\sage{Lower(AF2_LX*AP0_LX,digits)},\sage{Upper(AF2_LX*AP0_LX*exp((2+M^(xi-2))*(1-M^(-xi))^(-2)*B(1+xi,M)),digits)})\cdot\log X+(\sage{Lower(AF2_LX*AP0_LX*ASumx-W2_lx*AF2P1_LX*AP1_LX*exp(2*(1-M^(-xi))^(-2)*(1-M^(-1/2))^(-1)*B(1/2+xi,M))/sqrt(N),digits)},\sage{Upper(AF2_LX*AP0_LX*exp((2+M^(xi-2))*(1-M^(-xi))^(-2)*B(1+xi,M))*ASumx+W2_lx*AF2P1_LX*AP1_LX*exp(2*(1-M^(-xi))^(-2)*(1-M^(-1/2))^(-1)*B(1/2+xi,M))/sqrt(N),digits)}),
 \end{align*}  
 and we complete the proof by recalling \eqref{analysis}. Indeed, for any $X\geq 1$, 
 \begin{equation*}
 K_2([X])-\sage{Upper(AF2_LX*AP0_LX*exp((2+M^(xi-2))*(1-M^(-xi))^(-2)*B(1+xi,M)),digits)}\cdot\log [X]\leq K_2([X])-c_2\log [X]\leq K_2([X])-\sage{Lower(AF2_LX*AP0_LX,digits)}\cdot\log [X];
 \end{equation*}	
 thus for any $X\geq 1$, we calculate and observe
\begin{align*} \max_{X\in[1,2\cdot 10^8]} \{K_2([X])-\sage{Lower(lc2,digits)}\cdot\log [X]\}&=K_2(3)-\sage{Lower(lc2,digits)}\log 3\\
&\leq\sage{Upper(log2mx,digits)},\\
\min_{X\in[1,2\cdot 10^8]} \bigg\{K_2([X])-\sage{Upper(uc2,digits)}\bigg(\log [X]-\log\bigg(\frac{[X]+1}{[X]}\bigg)\bigg)\bigg\}\\
= K_2(1)-\sage{Upper(uc2,digits)}\times(\log 1-\log 2)&\geq\sage{Lower(log2mm,digits)}.
\end{align*}
 \end{proofbold}
 \newline
 

\begin{proofbold}[Corollary \ref{auxnu}]  
  Set $Y=X/10^{12}$.
  We readily find that
  \begin{equation*}
    \frac{1}{\log^2(X/\ell)}
    =
    \frac{1}{(12\log10)^2}
    -\int_{\ell}^Y\frac{2\ dt}{t\log^3(X/t)}
  \end{equation*}
  so that, similarly to the observation of Corollary \ref{auxnu1},
  \begin{equation*}
   L_2(X)
       =
    -\int_1^Y \frac{2K_2(t)}{t\log^3(X/t)}dt
    +
    \frac{K_2(Y)}{(12\log10)^2}
  \end{equation*}
  where $K_2(X)$ is defined in Lemma \ref{auxf22}. Thus, Lemma~\ref{auxf22} applies to give the bound
  \begin{align*}
   L_2(X)
    \le -2\int_1^Y \frac{c_2\log t + \sage{Lower(log2mm,digits)}}{t\log^3(X/t)}dt
    +
    \frac{c_2\log Y + \sage{Upper(log2mx,digits)}}{(12\log10)^2}\phantom{xxxxxxxxxxxxxxxxxxxxx}&
    \\\le -2 \int_{12\log 10}^{12\log 10+\log Y} \frac{c_2
    (12\log10+\log Y -v) + \sage{Lower(log2mm,digits)}}{v^3}dv
    +
    \frac{c_2\log Y + \sage{Upper(log2mx,digits)}}{(12\log10)^2}&\\
    \\= \frac{c_2\log X + \sage{Lower(log2mm,digits)}}{\log^2X}-\frac{c_2\log X + \sage{Lower(log2mm,digits)}}{(12\log 10)^2}-\frac{2c_2}{\log X}    +\frac{2c_2}{12\log 10}+
    \frac{c_2\log Y + \sage{Upper(log2mx,digits)}}{(12\log10)^2}&
    \\\leq\frac{c_2}{\log X}-\frac{c_2\log X}{(12\log 10)^2}-\frac{2c_2}{\log X}    +\frac{2c_2}{12\log 10}+
    \frac{c_2\log Y + \sage{Upper(log2mx,digits)}}{(12\log10)^2}\phantom{xxxxxxxxxx}&
  \\=-\frac{c_2}{\log X}   +\frac{c_2}{12\log 10}+
    \frac{ \sage{Upper(log2mx,digits)}}{(12\log10)^2}\leq
    \frac{c_2}{12\log 10}+
    \frac{ \sage{Upper(log2mx,digits)}}{(12\log10)^2}.\phantom{xxxxxxxxxx}&
  \end{align*}
The result is concluded by recalling that $c_2\leq\sage{Upper(uc2,digits)}$.
\end{proofbold}

\bibliographystyle{plain}
\bibliography{Local}

@article {Breteche-Dress-Tenenbaum*20,
    AUTHOR = {de la Bret\`eche, R. and Dress, F. and
              Tenenbaum, G.},
     TITLE = {Remarques sur une somme li\'{e}e \`a la fonction de
              {M}\"{o}bius},
   JOURNAL = {Mathematika},
  FJOURNAL = {Mathematika. A Journal of Pure and Applied Mathematics},
    VOLUME = {66},
      YEAR = {2020},
    NUMBER = {2},
     PAGES = {416--421},
      ISSN = {0025-5793,2041-7942},
   MRCLASS = {11N37 (11A25 11N56 11N64)},
  MRNUMBER = {4130331},
MRREVIEWER = {Y.-F.\ S.\ P\'{e}termann},
       DOI = {10.1112/mtk.12021},
}

@Article{Dress-Iwaniec-Tenenbaum*83,
  Title                    = {Sur une somme li\'ee \`a la fonction de {M}\"obius},
  Author                   = {Dress, F. and Iwaniec, H. and Tenenbaum, G.},
  Journal                  = {J. Reine Angew. Math.},
  Year                     = {1983},
  Pages                    = {53-58},
  Volume                   = {340},
  Coden                    = {JRMAA8},
  Fjournal                 = {Journal f\"ur die Reine und Angewandte Mathematik},
  ISSN                     = {0075-4102},
  Mrclass                  = {10H25 (10A20)},
  Mrreviewer               = {Christian Radoux}
}

@article {Dusart*18,
    AUTHOR = {Dusart, P.},
     TITLE = {Explicit estimates of some functions over primes},
   JOURNAL = {Ramanujan J.},
    VOLUME = {45},
      YEAR = {2018},
     PAGES = {227--251},
}

@Article{Granville-Ramare*96,
  Title                    = {Explicit bounds on exponential sums and the scarcity of squarefree binomial coefficients},
  Author                   = {Granville, A. and Ramar\'e, O.},
  Journal                  = {Mathematika},
  Year                     = {1996},
  Number                   = {1},
  Pages                    = {73-107},
  Volume                   = {43}
}

@Book{Helfgott*30,
  author = 	 {Helfgott, H.A.},
  ALTeditor = 	 {},
  title = 	 {The ternary Goldbach conjecture},
  publisher = 	 {Book accepted for publication in Ann. of Math. Studies},
  year = 	 {2019},
  OPTkey = 	 {},
  OPTvolume = 	 {},
  OPTnumber = 	 {},
  OPTseries = 	 {},
  OPTaddress = 	 {},
  OPTedition = 	 {},
  OPTmonth = 	 {},
  OPTnote = 	 {},
  URL={https://webusers.imj-prg.fr/~harald.helfgott/anglais/book.html (version 09/2019)},
  OPTannote = 	 {}
}

@article {Johnston*22,
    AUTHOR = {Johnston, D.R.},
     TITLE = {Improving bounds on prime counting functions by partial
              verification of the {R}iemann hypothesis},
   JOURNAL = {Ramanujan J.},
  FJOURNAL = {Ramanujan Journal. An International Journal Devoted to the
              Areas of Mathematics Influenced by Ramanujan},
    VOLUME = {59},
      YEAR = {2022},
    NUMBER = {4},
     PAGES = {1307--1321},
      ISSN = {1382-4090},
   MRCLASS = {11Y70 (11M26 11N05)},
  MRNUMBER = {4507211},
       DOI = {10.1007/s11139-022-00616-x},

}

@Article{Ramare*95,
  Title                    = {On {S}nirel'man's constant},
  Author                   = {Ramar\'e, O.},
  Journal                  = {Ann. Scu. Norm. Pisa},
  Year                     = {1995},
  Pages                    = {645-706},
  Volume                   = {21}
}

@Article{Ramare*12-4,
  Title                    = {Some elementary explicit bounds for two mollifications of the {M}oebius function},
  Author                   = {Ramar\'e, O.},
  Journal                  = {Functiones et Approximatio},
  Year                     = {2013},
  Number                   = {2},
  Pages                    = {229--240},
  Volume                   = {49}
}

@Article{Ramare*13d,
  Title                    = {An explicit density estimate for {D}irichlet ${L}$-series},
  Author                   = {Ramar\'e, O.},
  Journal                  = {Math. Comp.},
  Year                     = {2016},
  Volume                   = {85},
  Number                   = {297},
  Pages                    = {335-356}
}

@Article{Ramare-Zuniga*23-1,
  author = 	 {Ramar\'e, O. and Zuniga-Alterman, S.},
  title = 	 {M\"obius function and primes: an identity factory with applications},
  journal = 	 {Math. Comp. (to appear)},
  year = 	 {2026},
  OPTkey = 	 {},
  OPTvolume = 	 {},
  OPTnumber = 	 {},
  OPTmonth = 	 {},
  OPTnote = 	 {},
  OPTannote = 	 {}
}

@Article{Ramare-Zuniga*25-1,
  author = 	 {Ramar\'e, O. and Zuniga-Alterman, S.},
  title = 	 {From explicit estimates for the primes to explicit estimates for the Möbius function II},
  journal = 	 {Funct. Approx. Comment. Math. Advance Publication},
  year = 	 {2025},
  OPTkey = 	 {},
  OPTvolume = 	 {1-19},
  OPTnumber = 	 {},
  OPTmonth = 	 {},
  OPTnote = 	 {},
  OPTannote = 	 {}
}

@article {Alterman*22,
    AUTHOR = {Zuniga-Alterman, S.},
     TITLE = {Explicit averages of square-free supported functions: to the
              edge of the convolution method},
   JOURNAL = {Colloq. Math.},
  FJOURNAL = {Colloquium Mathematicum},
    VOLUME = {168},
      YEAR = {2022},
    NUMBER = {1},
     PAGES = {1--23},
      ISSN = {0010-1354,1730-6302},
   MRCLASS = {11N37},
  MRNUMBER = {4378559},
MRREVIEWER = {Olivier\ Bordell\`es},
       DOI = {10.4064/cm8337-11-2020},
}

\Addresses

\end{document}